\colorlet{shadecolor}{red!20}
\setlist[description]{leftmargin=\parindent,labelindent=\parindent}
\newtheorem{thm}{Theorem}[section]
\newtheorem{cor}[thm]{Corollary}
\newtheorem{lem}[thm]{Lemma}
\newtheorem{prop}[thm]{Proposition}
\newtheorem{definition}[thm]{Definition}
\newtheorem{exam}{Example}[section]
\newtheorem*{thm*}{Theorem}
\newcommand{\U}{\mathcal{U}}
\newcommand{\N}{\mathbb{N}}
\newcommand{\Z}{\mathbb{Z}}
\newcommand{\Q}{\mathbb{Q}}
\newcommand{\C}{\mathbb{C}}
\newcommand{\V}{\mathcal{V}}
\newcommand{\bN}{\beta\mathbb{N}}
\newcommand{\supp}{\operatorname{supp}}
\renewcommand{\phi}{\varphi}
\newcommand{\prodint}[2]{ #1\cdot #2}
\newcommand{\mat}[3]{\operatorname{Mat}_{#1\times #2}(#3)}
\newcommand{\vecspan}{\operatorname{span}}
\renewcommand{\hat}{\widehat}
\renewcommand{\check}{\widecheck}
\DeclareRobustCommand\widecheck[1]{{\mathpalette\@widecheck{#1}}}
\def\@widecheck#1#2{%
    \setbox\z@\hbox{\m@th$#1#2$}%
    \setbox\tw@\hbox{\m@th$#1%
       \widehat{%
          \vrule\@width\z@\@height\ht\z@
          \vrule\@height\z@\@width\wd\z@}$}%
    \dp\tw@-\ht\z@
    \@tempdima\ht\z@ \advance\@tempdima2\ht\tw@ \divide\@tempdima\thr@@
    \setbox\tw@\hbox{%
       \raise\@tempdima\hbox{\scalebox{1}[-1]{\lower\@tempdima\box
\tw@}}}%
    {\ooalign{\box\tw@ \cr \box\z@}}}
\newcommand{\subjclass}[2][1991]{%
  \let\@oldtitle\@title%
  \gdef\@title{\@oldtitle\footnotetext{#1 \emph{Mathematics Subject Classification:} #2}}%
}
\newcommand{\keywords}[1]{%
  \let\@@oldtitle\@title%
  \gdef\@title{\@@oldtitle\footnotetext{\emph{Keywords:} #1.}}%
}
\numberwithin{equation}{section}
\title{Rado functionals and applications}
\date{\today}
\author{

    Paulo Henrique Arruda \thanks{Universit\"{a}t Wien, Fakult\"{a}t f\"{u}r Mathematik, Oskar-Morgenstern-Platz 1, 1090 Vienna, Austria. Corresponding author. \url{paulo.arruda@univie.ac.at}. Supported by grant P30821-N35 of the Austrian Science Fund FWF.}
    
    \and

    Lorenzo Luperi Baglini \thanks{Dipartimento di Matematica, Universit\`{a} di Milano, Via Saldini 50, 20133 Milano, Italy. \url{lorenzo.luperi@unimi.it}. Supported by grant P30821-N35 of the Austrian Science Fund FWF.}

}
\date{\today}
\subjclass[2020]{Primary 05D10, 11B75; Secondary 11U10, 54D80.}
\keywords{partition regularity of equations; ultrafilters; Rado conditions}
\begin{document}
\maketitle
\abstract{
We study Rado functionals and the maximal condition (first introduced in \cite{BarretMoreiraLupiniMoreira2021}) in terms of the partition regularity of mixed systems of linear equations and inequalities. By strengthening the maximal Rado condition, we provide a sufficient condition for the partition regularity of polynomial equations over some infinite subsets of a given commutative ring. By applying these results, we derive an extension of a previous result obtained in \cite{LuperiBagliniDiNasso2018} concerning partition regular inhomogeneous polynomials in three variables and also conditions for the partition regularity of equations of the form $H(xz^\rho,y)=0$, where $\rho$ is a non-zero rational and $H\in\Z[x,y]$ is a homogeneous polynomial. 
}
\section{Introduction}

Through this article, $R$ will denote a commutative ring with unity; given any $S\subseteq R$, we denote by $S^{\times}$ the set $S\setminus\{0\}$. As usual, we denote the set of all polynomials over $R$ by $R[x_1,\dots,x_n]$ and, given $m,n\in\N$, we let $\mat{m}{n}{R}$ be the set of all $m\times n$ matrices with entries in $R$. 

A long studied problem in combinatorics is the partition regularity of systems of Diophantine equations.

\begin{definition}\label{definition:PR}
Given $P_1,\dots,P_m\in R[x_1,\dots,x_n]$, we say that the system of equations
\begin{equation*}
    \sigma\left(x_{1},\dots,x_{n}\right):=\left\{
        \begin{matrix}
            P_1\left(x_1,\dots,x_n\right) & = & 0\\
            \vdots & \vdots & \vdots\\
            P_m\left(x_1,\dots,x_n\right) & = & 0\\
        \end{matrix}
    \right.
\end{equation*}
is partition regular over $S$ if for every finite coloring\footnote{A finite coloring of $S$ is a function $c:S\to \{1,\dots,r\}$ for some $r\in\N$.} $c$ of $S$ there are $c$-monochromatic\footnote{Namely all belonging to $c^{-1}(i)$ for some $i\leq r$.} $a_1,\dots,a_n\in S$ satisfying $\sigma\left(a_1,\dots,a_n\right)=0$. 

We say that $\sigma\left(x_{1},\dots,x_{n}\right)=0$ is infinitely partition regular over $S$ if for each coloring $c$ of $S$ there are infinitely many $c$-monochromatic $n$-tuples $\left(a_1,\dots,a_n\right)\in S^n$ satisfying $\sigma\left(a_1,\dots,a_n\right)=0$ for each $i\leq m$. 

When $m=1$, we will simply say that the polynomial $P_{1}$ is (infinitely) partition regular to mean that the equation $P_{1}\left(x_1,\dots,x_n\right)=0$ is (infinitely) partition regular.
\end{definition}

In 1933, R. Rado completely characterized which systems of linear equations are partition regular over $\N$ in terms of the so-called columns condition, that we formulate here in a more general version for $R$.

\begin{definition}
Let $A\in\mat{m}{n}{R}$ and $C_1,\dots,C_m$ be the columns of $A$; we say that $A$ satisfies the columns condition if there is a partition $I_0,\dots,I_r$ of $[m]$ such that
\begin{enumerate}
    \item $\sum_{i\in I_0}C_i=\vec{0}$; and 
    \item given any $u\in [r]$, $\sum_{i\in I_u}C_i \in \vecspan_K\{C_j:j\in I_0\cup\dots\cup I_{u-1}\}$, where $K$ is the field of fractions of $R$.
\end{enumerate}
\end{definition}

\begin{thm}\cite{Rado1933}\label{theorem:RadosTheorem} (Rado's Theorem)
Given a matrix $A\in\mat{m}{n}{\Q}$, the system $A\vec{t}=\vec{0}$ is partition regular over $\N$ if and only if $A$ satisfies the columns condition.
\end{thm}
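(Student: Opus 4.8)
The plan is to prove the two implications separately, after the harmless reduction to an integer matrix: multiplying $A$ by a common denominator of its entries changes neither the solution set in $\N^{n}$ nor the columns condition (which is phrased over the fraction field $K=\Q$), so I may assume $A\in\mat{m}{n}{\Z}$ with columns $C_{1},\dots,C_{n}$.

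For necessity, suppose $A\vec{t}=\vec{0}$ is partition regular and, for each prime $p$, consider the coloring $c_{p}\colon\N\to\{1,\dots,p-1\}$ sending $x$ to the value of its least significant nonzero digit in base $p$. A $c_{p}$-monochromatic solution $\vec{t}$ has all coordinates of the form $t_{j}=p^{k_{j}}u_{j}$ with $u_{j}\equiv d\pmod p$ for a common digit $d\in\{1,\dots,p-1\}$. Writing $\mu=\min_{j}k_{j}$, dividing $\sum_{j}t_{j}C_{j}=\vec{0}$ by $p^{\mu}$ and reducing modulo $p$ leaves only the terms with $k_{j}=\mu$, giving $d\sum_{j\,:\,k_{j}=\mu}C_{j}\equiv\vec{0}\pmod p$, hence $\sum_{j\,:\,k_{j}=\mu}C_{j}\equiv\vec{0}\pmod p$ since $d$ is invertible modulo $p$. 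As there are only finitely many subsets of $[n]$, one such subset $I_{0}$ recurs for infinitely many primes, and an integer vector divisible by infinitely many primes must vanish; thus $\sum_{i\in I_{0}}C_{i}=\vec{0}$. Iterating the digit analysis on successively higher base-$p$ layers produces the refinement $I_{1},\dots,I_{r}$ witnessing the columns condition, and organizing this hierarchy uniformly in $p$ is the delicate point of this direction.

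For sufficiency I would work with Deuber's $(m,p,c)$-sets, the set generated by $x_{0},\dots,x_{m}$ consisting of all $c\,x_{i}+\sum_{j>i}\lambda_{j}x_{j}$ with $0\le i\le m$ and $|\lambda_{j}|\le p$. Two ingredients are needed: (i) for every $m,p,c$, each finite coloring of $\N$ admits a monochromatic $(m,p,c)$-set; and (ii) if $A$ satisfies the columns condition then $m,p,c$ can be chosen so that every $(m,p,c)$-set contains a solution of $A\vec{t}=\vec{0}$. For (ii) one converts the partition $I_{0},\dots,I_{r}$ and the rational combinations appearing in the columns condition into an explicit solution whose coordinates are built from the generators: the nesting of the spans $\vecspan_{K}\{C_{j}:j\in I_{0}\cup\dots\cup I_{u-1}\}$ matches the nested structure of $x_{0},\dots,x_{m}$, the multiplier $c$ clears the denominators of these combinations, and $p$ bounds the resulting coefficients. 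Combining (i) and (ii) then yields a monochromatic solution.

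The main obstacle is ingredient (i). I would prove it by induction on $m$: the base case reduces to Schur's and van der Waerden's theorems, while the inductive step locates, inside a single color class, a sufficiently long arithmetic progression of base points carrying an $(m-1,p,c)$-configuration in its difference structure, again invoking van der Waerden (or Hales--Jewett) to secure monochromaticity; keeping the bound $p$ and the multiplier $c$ under control through the layers is the technical heart. Alternatively, in the spirit of the present paper, sufficiency can be obtained ultrafilter-theoretically by exhibiting a minimal idempotent of $(\bN,+)$ whose members are central sets and applying the Central Sets Theorem to produce the solutions directly, trading the combinatorial induction for the algebraic structure of $\bN$.
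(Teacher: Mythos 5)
The paper does not prove this statement: Theorem \ref{theorem:RadosTheorem} is quoted from \cite{Rado1933} as a known classical result, so there is no in-paper argument to compare yours against; the comparison below is with the standard proof you are implicitly reconstructing. Your overall architecture is exactly the classical one (last-nonzero-digit colorings mod $p$ for necessity, Deuber's $(m,p,c)$-sets for sufficiency), and the step you do carry out in full --- extracting $I_0$ with $\sum_{i\in I_0}C_i=\vec{0}$ via the pigeonhole over infinitely many primes --- is correct. But there are two genuine gaps. First, in the necessity direction, ``iterating the digit analysis on successively higher base-$p$ layers'' does not by itself yield the span condition: when you divide $\sum_j p^{k_j}u_jC_j=\vec 0$ by $p^{\mu_u}$ and reduce mod $p$, the terms from lower layers contribute through the \emph{higher-order digits} of the $u_j$, which your coloring does not control, so you get a congruence contaminated by unknown carries rather than a clean statement about $\sum_{j\in I_u}C_j$. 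The standard repair, which is the missing idea: pigeonhole so that the \emph{entire ordered partition} $(I_0,\dots,I_r)$ (the level sets of the $k_j$, ordered by $\mu_0<\dots<\mu_r$) recurs for infinitely many primes, and then, for each $u$, take any rational $w$ orthogonal to $\vecspan_{\Q}\{C_j: j\in I_0\cup\dots\cup I_{u-1}\}$ (scaled to be integral), apply $w$ to the equation: the lower layers are killed by orthogonality, the higher layers by divisibility, and division by $p^{\mu_u}$ followed by reduction mod $p$ gives $d\,\bigl(w\cdot\sum_{j\in I_u}C_j\bigr)\equiv 0\pmod p$ for infinitely many $p$, hence $w\cdot\sum_{j\in I_u}C_j=0$; since $w$ was arbitrary in the orthogonal complement, duality gives $\sum_{j\in I_u}C_j\in\vecspan_{\Q}\{C_j:j\in I_0\cup\dots\cup I_{u-1}\}$. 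Without this orthogonality device your iteration stalls at the first layer.

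Second, in the sufficiency direction, your ingredient (i) \emph{is} Deuber's theorem, which carries essentially the full weight of this implication, and your inductive sketch is not yet a proof: the inductive step needs more than locating a long monochromatic arithmetic progression of base points --- one must run an iterated van der Waerden (or Hales--Jewett) argument with parameters $(m',p',c')$ chosen large as a function of $(m,p,c)$ and the number of colors, precisely the bookkeeping you defer. Ingredient (ii), that the columns condition forces solutions inside a suitable $(m,p,c)$-set, is correctly described in spirit ($c$ clears denominators, $p$ bounds coefficients, the nesting of spans matches the nesting of generators) but also needs the explicit construction. As it stands, your text is a sound and well-organized outline of the standard proof rather than a complete proof: the two points you yourself flag as ``delicate'' are exactly where the substantive mathematics lives, and the first one needs the orthogonal-complement argument above, not merely more iteration.
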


In \cite{Rado1945}, Rado proved the analogous of Theorem \ref{theorem:RadosTheorem} for subrings of $\C$. Also, in the same article, Rado proved the characterization of all possible inhomogeneous linear systems that are partition regular, which reads as follows:

\begin{thm}\label{theorem:PR_linear_inhomogeneous}
Given $A\in\mat{m}{n}{\Z}$ and $\vec{b}\in \Z^m$, the system $A\vec{t}=\vec{b}$ is partition regular over $\N$ if and only if either 
\begin{enumerate}
    \item there is a constant solution $s\in\N$, i.e. $A(s,\dots,s)=\vec{b}$; or
    \item $A$ satisfies the columns condition and there is a constant solution $s\in \Z$.
\end{enumerate}
\end{thm}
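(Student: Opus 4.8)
The plan is to prove Theorem \ref{theorem:PR_linear_inhomogeneous} by establishing the two directions separately, building on Rado's homogeneous Theorem \ref{theorem:RadosTheorem}. The easy direction is sufficiency. If condition (1) holds, then the constant tuple $(s,\dots,s)\in\N^n$ is a monochromatic solution under \emph{any} coloring, so partition regularity is immediate. If instead condition (2) holds, the strategy is to reduce the inhomogeneous problem to a homogeneous one by translation: given a constant solution $s\in\Z$ with $A(s,\dots,s)=\vec{b}$, I would substitute $t_i=u_i+s$, so that $A\vec{t}=\vec{b}$ becomes $A\vec{u}=\vec{0}$. Since $A$ satisfies the columns condition, Theorem \ref{theorem:RadosTheorem} guarantees a monochromatic solution to the homogeneous system over $\N$; the point to verify is that these homogeneous solutions can be taken large enough (in each coordinate) that the shifted values $u_i+s$ land in $\N$, which follows because partition-regular homogeneous systems always admit solutions with arbitrarily large entries.

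For the necessity direction, I would assume $A\vec{t}=\vec{b}$ is partition regular over $\N$ and that condition (1) fails, and deduce condition (2). The key device is to pass to the homogeneous system by adjoining the constant vector: consider the homogeneous system $A\vec{t}-s\vec{b}=\vec{0}$ in the variables $(t_1,\dots,t_n,s)$, whose coefficient matrix is $[A\mid -\vec{b}]$. The plan is to show first that partition regularity of the inhomogeneous system forces partition regularity of this augmented homogeneous system over $\N$, so that by Theorem \ref{theorem:RadosTheorem} the augmented matrix $[A\mid-\vec{b}]$ satisfies the columns condition; a natural coloring argument (coloring each $n$-tuple by the color of its entries, and arranging the extra coordinate to be a fixed monochromatic scalar) should supply a monochromatic solution to the augmented system with $s\neq 0$, whence dividing through by $s$ yields a rational constant solution, establishing the existence of some $s\in\Z$ with $A(s,\dots,s)=\vec{b}$ once integrality is argued.

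The main obstacle, and the delicate part of the argument, is extracting the \emph{columns condition on $A$ itself} (not merely on the augmented matrix) together with the \emph{integrality} of the constant solution. For the columns condition on $A$: from the partition of the columns of $[A\mid -\vec{b}]$ witnessing its columns condition, I would analyze which block the adjoined column $-\vec{b}$ falls into and show that removing it leaves a valid columns-condition partition for $A$, using that $\vec{b}$ is then a span of the remaining columns over $K=\Q$. For integrality, the homogeneous solution produces a \emph{rational} constant $s=p/q$ solving $A(s,\dots,s)=\vec{b}$; to promote this to an integer solution I would use a density/dilation argument, replacing a monochromatic solution by a suitable multiple so that clearing the denominator $q$ preserves monochromaticity (for instance via a coloring that is invariant under multiplication by $q$, or by working inside a monochromatic scaled copy), forcing $s\in\Z$. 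The interplay between guaranteeing $s\neq 0$ in the augmented solution and simultaneously ruling out that condition (1) silently holds is where the argument requires the most care.
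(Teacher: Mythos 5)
Your sufficiency direction is essentially the standard one (the paper itself does not prove Theorem \ref{theorem:PR_linear_inhomogeneous}; it quotes it from \cite{Rado1945}, so I compare against the classical argument). Under (1) the constant tuple is trivially monochromatic; under (2) the translation $t_i=u_i+s$ is correct, with one caveat you should state explicitly: Rado's Theorem \ref{theorem:RadosTheorem} must be applied to the \emph{shifted} coloring $u\mapsto c(u+s)$ (with a spare color where $u+s\leq 0$), not to $c$ itself, since $c$-monochromaticity of $\vec{u}$ says nothing about $\vec{u}+s(1,\dots,1)$; this is exactly the device the paper uses in Lemma \ref{lemma:infinitely_pr_inhomogeneous_linear}. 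Your appeal to ``solutions with arbitrarily large entries'' is fine and follows from dilation invariance: replace the coloring $c'$ by $u\mapsto c'\big((N+1)u\big)$ and multiply the resulting solution by $N+1$.

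The necessity direction, however, has a genuine gap: the augmented-matrix route cannot carry it. Two concrete failures. First, partition regularity of $A\vec{t}=\vec{b}$ does \emph{not} in general imply partition regularity of the augmented homogeneous system: $x+y=4$ is partition regular (constant solution $2$), but the augmented matrix $(1,\ 1,\ -4)$ has no subset of entries summing to zero, so by Theorem \ref{theorem:RadosTheorem} the system $x+y-4s=0$ is not partition regular. Hence your reduction must use the failure of (1) in an essential way, and the sketched ``natural coloring argument'' in which the extra coordinate is ``a fixed monochromatic scalar'' is not an argument: $s$ is a variable of the system and you cannot prescribe its color class. Second, and worse, even granting partition regularity of the augmented system you cannot extract condition (2) from it: for $x-2y=1$ the augmented matrix $(1,\ -2,\ -1)$ \emph{does} satisfy the columns condition (take $I_0=\{1,3\}$, $I_1=\{2\}$) and there is an integer constant solution $s=-1$, yet $A=(1,\ -2)$ fails the columns condition and the equation is not partition regular; in particular your plan to delete the adjoined column from a columns-condition partition fails here, since removing index $3$ from $I_0$ leaves the nonzero sum $1$. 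Also ``dividing through by $s$ yields a rational constant solution'' is a non sequitur: a monochromatic solution of $A\vec{t}=s\vec{b}$ is not a constant tuple, so dividing gives only a rational solution, which exists whenever the system is consistent and carries no information. The standard repair bypasses the augmented matrix entirely: use the colorings $c_d(x)=x \bmod d$. A $c_d$-monochromatic solution forces $r\,A(1,\dots,1)^{\operatorname{T}}\equiv \vec{b} \pmod{d}$ for some residue $r$; letting $d$ run over suitable multiples of the nonzero entries of $A(1,\dots,1)^{\operatorname{T}}$ forces divisibility and consistency, producing an integer constant solution $s\in\Z$ directly (no rational intermediate step, no dilation). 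Then, if (1) fails, so $s\leq 0$, the shift $\tilde{c}(t)=c(t-s)$ transfers monochromatic solutions of $A\vec{t}=\vec{b}$ to monochromatic solutions of $A\vec{u}=0$ in $\N$, and Theorem \ref{theorem:RadosTheorem} yields the columns condition for $A$, completing condition (2).
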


We call \emph{Rado system} any system of linear homogeneous equations that is partition regular. Recently, the following generalization of Rado's Theorem was proved: 

\begin{thm} \cite[Theorem A]{ByszewskiKrawczyk2021}\label{theorem:Rado_polish} Let $D$ be an infinite integral domain and $A\in\mat{m}{n}{D}$. Then, the system $Ax=0$ is partition regular over $D$ if and only if $A$ satisfies the columns condition.
\end{thm}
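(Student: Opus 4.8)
The plan is to prove the two implications separately, after one simplification. Since linear (in)dependence over a field is unchanged by field extension, the columns condition for $A$ is intrinsic to the subring $D_0\subseteq D$ generated by the (finitely many) entries of $A$: the conditions $\sum_{i\in I_0}C_i=\vec 0$ and $\sum_{i\in I_u}C_i\in\vecspan_K\{C_j:j\in I_0\cup\dots\cup I_{u-1}\}$ hold over $K$ if and only if they hold over $K_0=\mathrm{Frac}(D_0)$. Thus the algebraic content lives in a finitely generated domain, while the combinatorics must be carried out over all of $D$. The two directions are then handled by a valuation colouring (for the forward implication) and a Deuber-type construction (for the converse).

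For ``$A\vec t=\vec 0$ partition regular over $D$ $\Rightarrow$ columns condition'', I would argue by contraposition using valuations of $K$. Assume the columns condition fails. The idea is to choose a valuation $v\colon K^{\times}\to\Gamma$, with value group $\Gamma$ satisfying $\Gamma/N\Gamma$ finite for the relevant $N$ and with finite residue field $\kappa$, chosen so that $v$ witnesses the failure; then colour $D^{\times}$ by the pair $(v(x)\bmod N,\ \mathrm{ac}(x))$, where $\mathrm{ac}\colon K^{\times}\to\kappa^{\times}$ is the angular component (leading coefficient), and give $0$ its own colour. Given a monochromatic solution $\vec t\in D^{n}$, stratify the indices by the value $v(t_j)$: the indices $I_0$ of least valuation dominate each equation modulo the next valuation level, so reducing $A\vec t=\vec 0$ there and normalizing by the common angular component yields a $\kappa$-linear dependence $\sum_{j\in I_0}\overline{C_j}=\vec 0$ among the reduced columns; peeling off $I_0$ and iterating up the finitely many valuation levels reconstructs a columns-condition partition $I_0,\dots,I_r$ over $\kappa$. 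The last step is a local-to-global transfer: one shows that if the columns condition fails over $K$, the failure persists at a single well-chosen valuation, so the associated colouring admits no monochromatic solution.

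For the converse ``columns condition $\Rightarrow$ partition regular over $D$'', I would run a Deuber-type argument adapted to $D$. Fix a finite digit set $F\subseteq D$, a multiplier $c\in D^{\times}$ and generators $x_0,\dots,x_m\in D$, and consider the generalized Deuber set $\{\,c x_i+\sum_{j>i} f_j x_j : 0\le i\le m,\ f_j\in F\,\}$. Two facts are needed: (a) if $A$ satisfies the columns condition then, for suitable parameters, every such set contains a solution of $A\vec t=\vec 0$, which is pure linear algebra over $K$ exploiting the staircase structure of $I_0,\dots,I_r$ to exhibit a solution whose coordinates are the prescribed $D$-combinations of the generators, after clearing denominators into $D$; and (b) these generalized Deuber sets are partition regular over $D$. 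For (b) in characteristic $0$ one embeds $\N\hookrightarrow D$ via $n\mapsto n\cdot 1_D$, restricts the colouring, and invokes the classical partition regularity of $(m,p,c)$-sets; in characteristic $p$ the integer multipliers degenerate, so one instead uses the additive $\mathbb{F}_p$-vector-space structure of the infinite domain $D$ and proves the required partition regularity by a Hales--Jewett / Graham--Rothschild type induction carried out inside $D$.

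The main obstacle I expect is twofold, each part concentrated in the non-classical features of a general $D$. On the combinatorial side (the converse), the characteristic $p$ case has no internal copy of $\N$, so the partition regularity of the generalized Deuber sets cannot be imported from $\N$ and must be established directly for the infinite $\mathbb{F}_p$-algebra $D$; making the multiplier $c$ and the finite digit set $F$ interact correctly with the columns-condition staircase is the delicate point. On the algebraic side (the forward implication), the crux is guaranteeing, uniformly in $D$, a valuation on $K$ that simultaneously detects the failure of the columns condition and has finite residue field, so that the angular-component colouring is finite; when $K$ is a large field of characteristic $p$ this choice, together with the local-to-global transfer, is where the real work lies.
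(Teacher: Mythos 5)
Note first that the paper does not prove this statement at all: it is imported verbatim as \cite[Theorem A]{ByszewskiKrawczyk2021}, so there is no internal proof to compare against; I therefore assess your plan on its merits and against the known strategy of the cited work.

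Your forward direction has a gap that is structural, not merely technical. It hinges on a valuation $v$ of $K=\operatorname{Frac}(D)$ with finite residue field $\kappa$ and $\Gamma/N\Gamma$ finite, but for many infinite domains no such valuation exists: if $D=\C$ (or any domain whose fraction field is algebraically closed), every nontrivial valuation on $K$ has algebraically closed, hence infinite, residue field, and divisible value group, so the colouring $(v\bmod N,\operatorname{ac})$ is never finite. Your opening reduction to the finitely generated subring $D_0$ does not repair this: the columns condition is indeed intrinsic to $K_0=\operatorname{Frac}(D_0)$, but the colouring must be defined on \emph{all} of $D$, and a monochromatic solution has no reason to lie in any prescribed finitely generated subring, so the combinatorics cannot be localized the way the linear algebra can. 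Even in the finitely generated case, producing a valuation with finite residue data at which the failure of the columns condition \emph{persists} is exactly the content of Cassels' embedding theorem into $\Q_p$ in characteristic $0$ (resp.\ embeddings into $\mathbb{F}_q((t))$ in characteristic $p$), combined with the fact that failure of the span conditions survives reduction modulo infinitely many primes (a rank argument plus pigeonhole over the finitely many candidate partitions, since a dependence over $\kappa$ alone says nothing over $K$); your single sentence about a ``well-chosen valuation'' conceals both this and the non-finitely-generated case, which is where the cited proof does genuine extra work.

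The converse also breaks, and earlier than you expect: the delicate case is not only characteristic $p$. The digits $F$ of your generalized Deuber sets must be the coefficients produced by the columns-condition staircase over $K$ (after clearing denominators into $D$), and these are in general not integer multiples of $1_D$; e.g.\ $x-y=\pi z$ over $D=\Z[\pi]$ satisfies the columns condition with span coefficient $\pi$. Hence the Deuber configuration is not contained in $\N\cdot 1_D$, and restricting a colouring of $D$ to the embedded copy of $\N$ can never produce it: your step (b) in characteristic $0$ proves partition regularity only for matrices with rational entries, not for arbitrary $A\in\mat{m}{n}{D}$. The uniform repair, in line with the abelian-group approach of \cite{BergelsonDeuberHindmanLefmann1994} and the methods of \cite{ByszewskiKrawczyk2021}, is to establish partition regularity of the $D$-Deuber configurations with an arbitrary finite digit set $F\subseteq D$ directly in the additive group $(D,+)$ by ultrafilter/central-sets techniques (which need no internal copy of $\N$ and treat both characteristics at once); this would also subsume your separate Hales--Jewett plan for characteristic $p$. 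As written, the proposal identifies the right classical skeleton but fails on both implications for domains as basic as $\C$ and $\Z[\pi]$.
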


Several generalizations of Rado's Theorem were proved for commutative rings and infinite integral domains \cite{BergelsonDeuberHindmanLefmann1994,ByszewskiKrawczyk2021}. Although the literature for (finite and infinite\footnote{We will not discuss infinite systems here; we just mention that, whilst the finite case is settled, the analogous problem of which infinite systems of linear equations are partition regular is still open.}) linear systems is quite extensive and general, the nonlinear case is scarce and mostly restricted to $\Z$. The articles \cite{LuperiBagliniArruda2022,BarretMoreiraLupiniMoreira2021,ChowLindqvistPrendiville2021,FarhangiMagner2021,Moreira2017,LuperiBagliniDiNasso2018,Prendiville2021} contain the latest results regarding the partition regularity of nonlinear equations that we are aware of. 

In this article we will build up upon results first proven by J.M. Barret, M. Lupini and J. Moreira in \cite{BarretMoreiraLupiniMoreira2021} as generalizations of preliminary results proven by M. Di Nasso and the first author of this present paper in \cite{LuperiBagliniDiNasso2018}, to study Rado sets, Rado functionals and their implications for the partition regularity of equations. The case of polynomials in three variables will then be studied more in detail.

This paper is structured as follows: In Section \ref{section:Rado_functionals} we recall the basic definitions of Rado partitions, sets and functionals for a given polynomial $P\in R[x_1,\dots,x_n]$, and we prove some implications and equivalences of these concepts in terms of partition regularity of systems of linear equations and inequalities. In Section \ref{section:rado_conditions} we revise the maximal Rado condition and provide an straightening of the Rado condition that is sufficient for the partition regularity over rings. Section \ref{section:examples_3_var} is devoted to apply Rado functionals to the case of inhomogeneous polynomials in three variables, in particular we give a necessary and sufficient condition for the partition regularity of equations $H(xz^\rho,y)=0$, where $\rho\in\Q^\times$ and $H\in\Z[x,y]$ is an homogeneous polynomial.

\section{Rado Functionals}\label{section:Rado_functionals}

\subsection{Lower and Upper Rado Functionals}\label{subsection:upper_Rado_functionals}

Building on some nonstandard characterizations first introduced in \cite{LuperiBagliniDiNasso2018}, in \cite{BarretMoreiraLupiniMoreira2021} J. M. Barret et al. introduced the notions of Rado sets and (upper and lower) Rado functionals and proved a necessary condition for the partition regularity of Diophantine equations, namely the maximal Rado condition (that we will discuss in detail in Section \ref{section:rado_conditions}). In this section, we provide some explicit characterizations of Rado sets and functionals in term of the partition regularity of mixed systems of linear equalities and inequalities. This will allow to deduce necessary conditions on the structure of Rado sets. 

We start by recalling the basic definitions from \cite{BarretMoreiraLupiniMoreira2021}.

\begin{definition}\label{definition:monochromatic_positive_map}
Let $\phi:\Z^n\to \Z$ be a linear map with coefficients $t_1,\dots,t_n\in \N$, i.e. $\phi(a_1,\dots,a_n)=t_1a_1+\dots + t_n a_n$. Given a coloring $c$ of $\N$, we say that $\phi$ is $c$-monochromatic if its coefficients are $c$-monochromatic, i.e. if $c(t_1)=\dots=c(t_n)$.
\end{definition}

A multi-index is any element $\alpha=(\alpha(1),\dots,\alpha(n))\in\N^{n}_0$ for some $n\in\mathbb{N}$. Let $\N_{0}^{<\omega}=\bigcup_{n\in\N}\N_{0}^{n}$. Given $\alpha\in\N_0^{<\omega}$, we let $\boldsymbol{x}^\alpha:=x_1^{\alpha(1)}\cdot\dots \cdot x_{n}^{\alpha(n)}$, so that $|\alpha|=\alpha(1)+\dots+\alpha(n)$ is the \emph{degree} of $\boldsymbol{x}^{\alpha}$. Given any polynomial $P\in R[x_1,\dots,x_n]$, for every $\alpha\in\N_0^{<\omega}$ there is a $c_\alpha\in R$ such that $P(\boldsymbol{x})=\sum_{\alpha} c_\alpha\boldsymbol{x}^\alpha$ and the set $\supp(P):=\{\alpha\in\N_0^{n}:c_\alpha\neq 0\}$ (called the \emph{support} of $P$) is finite. We say that $J\subseteq\supp(P)$ is \emph{homogeneous} if for all $\alpha,\beta\in J$, $|\alpha|=|\beta|$; in particular, $P$ is homogeneous if and only if $\supp(P)$ is.

\begin{definition}
Given a polynomial $P\in R[x_1,\dots,x_n]$, a coloring $c$ for $\N$ and a $c$-monochromatic linear map $\phi$, let $M_0,\dots,M_l$ be an enumeration of $\phi[\supp(P)]$; we say that a partition $J_0,\dots,J_l$ of $\supp(P)$ is determined by $\phi$ if for every $i\in[0,l]$, $J_i = \phi^{-1}[\{M_i\}]$, i.e. $J_i$ is the fiber $\{\alpha\in\supp(P):\phi(\alpha)=M_i\}$. 

A Rado partition of the support of $P$ is a tuple $(J_0,\dots,J_l)$ such that for all colorings $c$ of $\N$, there exist infinitely many $c$-monochromatic linear maps $\phi$ such that $J_0,\dots,J_l$ is a partition for $\supp(P)$ determined by $\phi$. 

A Rado set over $P$ is any $J\subseteq \supp(P)$ such that there are a Rado partition $(J_0,\dots,J_l)$ and $i\in[0,l]$ such that $J=J_i$. 
\end{definition}

When trying to find all possible Rado partitions of the support of $P$, the first question to answer is which subsets of $\supp(P)$ can be Rado sets. In one direction, a trivial characterization can be given in terms of partition regular systems.

\begin{lem}\label{lemma:M_J_1}
Let $J=\{\alpha_1,\dots,\alpha_k\}$ be a Rado set for a polynomial $P\in R[x_1,\dots,x_n]$; then, given any $j\in [k]$, the matrix 
\begin{equation*}
    M_{j}(J) = 
    \begin{pmatrix}
        \alpha_1(1) - \alpha_j(1) & \alpha_1(2)-\alpha_j(2) & \dots & \alpha_1(n) - \alpha_j(n) \\
        \alpha_2(1) - \alpha_j(1) & \alpha_2(2)-\alpha_j(2) & \dots & \alpha_2(n) - \alpha_j(n) \\
                 \vdots          &          \vdots         & \dots & \vdots   \\      
        \alpha_{j-1}(1) - \alpha_{j}(1) & \alpha_{j-1}(2) - \alpha_{j}(2) &\dots & \alpha_{j-1}(n) - \alpha_{j}(n) \\
        \alpha_{j+1}(1) - \alpha_{j}(1) & \alpha_{j+1}(2) - \alpha_{j}(2) &\dots & \alpha_{j+1}(n) - \alpha_{j}(n) \\
                \vdots          &          \vdots         & \dots & \vdots   \\   
        \alpha_k(1) - \alpha_{j}(1) & \alpha_k(2) - \alpha_{j}(2) &\dots & \alpha_k(n) - \alpha_{j}(n) \\        
    \end{pmatrix}
\end{equation*}
satisfies the columns condition.
\end{lem}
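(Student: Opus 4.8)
The plan is to recognize that, after reading its unknowns as the coefficients of a linear map, the system $M_j(J)\vec{t}=\vec{0}$ encodes exactly the requirement that $\alpha_1,\dots,\alpha_k$ all lie in a single fiber of $\phi$, and then to invoke Rado's Theorem (Theorem \ref{theorem:RadosTheorem}) to pass from partition regularity of this system to the columns condition.

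First I would unwind the matrix equation. Writing the unknowns of $M_j(J)\vec{t}=\vec{0}$ as a tuple $\vec{t}=(t_1,\dots,t_n)$, the $i$-th row reads $\sum_{s=1}^n (\alpha_i(s)-\alpha_j(s))t_s=0$, equivalently $\sum_s \alpha_i(s)t_s=\sum_s \alpha_j(s)t_s$. Letting $\phi$ be the linear map with coefficients $t_1,\dots,t_n$, this is precisely $\phi(\alpha_i)=\phi(\alpha_j)$. Ranging over all $i\neq j$, the full system $M_j(J)\vec{t}=\vec{0}$ is therefore equivalent to $\phi(\alpha_1)=\dots=\phi(\alpha_k)$, i.e. to the assertion that $J$ is contained in a single fiber of $\phi$.

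Next I would exploit that $J$ is a Rado set: by definition there is a Rado partition $(J_0,\dots,J_l)$ of $\supp(P)$ and an index $i_0$ with $J=J_{i_0}$. Fixing an arbitrary finite coloring $c$ of $\N$, the defining property of a Rado partition furnishes infinitely many $c$-monochromatic linear maps $\phi$ for which $J_0,\dots,J_l$ is the partition of $\supp(P)$ determined by $\phi$. For each such $\phi$ the block $J=J_{i_0}=\phi^{-1}[\{M_{i_0}\}]$ is a fiber, so by the previous paragraph the coefficient tuple $\vec{t}$ of $\phi$ solves $M_j(J)\vec{t}=\vec{0}$; moreover $\phi$ being $c$-monochromatic means exactly that $c(t_1)=\dots=c(t_n)$, so $\vec{t}$ is a monochromatic solution. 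Since distinct maps $\phi$ have distinct coefficient tuples, we obtain infinitely many monochromatic solutions, and as $c$ was arbitrary the system $M_j(J)\vec{t}=\vec{0}$ is (infinitely) partition regular over $\N$.

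Finally, since every entry $\alpha_i(s)-\alpha_j(s)$ is an integer, $M_j(J)\in\mat{k-1}{n}{\Z}\subseteq\mat{k-1}{n}{\Q}$, so Rado's Theorem applies and equates partition regularity of $M_j(J)\vec{t}=\vec{0}$ over $\N$ with $M_j(J)$ satisfying the columns condition, which is the claim. I do not expect a genuine obstacle: the only points demanding care are the bookkeeping that matches ``$c$-monochromatic linear map'' with ``monochromatic solution tuple'' of the system, and checking that the hypotheses of Rado's Theorem (rational entries, partition regularity over $\N$) hold so that the columns condition can be read off directly.
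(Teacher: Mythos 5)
Your proposal is correct and follows essentially the same route as the paper: both arguments read the coefficient tuple of a $c$-monochromatic linear map realizing $J$ as a fiber as a monochromatic solution of $M_j(J)\vec{t}=\vec{0}$, and then conclude via Rado's Theorem (Theorem \ref{theorem:RadosTheorem}). Your write-up is in fact slightly more explicit than the paper's, which leaves the final appeal to Rado's Theorem implicit.
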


\begin{proof}
Giving a colouring $c$ for $\N$, there must be (infinitely many) $c$-monochromatic positive linear maps $\phi:\Z^n\to \Z$ such that, for each $u,v\in[k]$, $\phi(\alpha_u)=\phi(\alpha_v)$; if $t_1,\dots,t_n\in\N$ are the coefficients of $\phi$, we have that
\begin{equation*}
    \big( \alpha_u(1)-\alpha_v(1)\big)t_1 + \dots + \big( \alpha_u(n)-\alpha_v(n)\big)t_n = 0.
\end{equation*}
Hence, picking any $j\in[k]$, we have that $M_{j}(J_i)(t_1,\dots,t_n)^{\operatorname{T}}=\vec{0}$ and, by definition, $t_1,\dots,t_n$ are $c$-monochromatic. Thus, $M_{j}(J_i)$ satisfies the columns condition.
\end{proof}

Let us observe that the columns condition is a property that is preserved under Gaussian operations; as such if $J=\{\alpha_1,\dots,\alpha_k\}$ is a Rado set for $P\in\Z[x_1,\dots,x_n]$ and $M_1(J)$ satisfies the columns condition, then for each $j\in[2,k]$, the matrix $M_j(J)$ also satisfies the columns condition. Hence, it is enough to work with the matrix $M(J) := M_1(J)$.

Lemma \ref{lemma:M_J_1} cannot be reversed, in general, as being a Rado set is a condition that involves the whole support of $P$: for example, if $P(x,y,z)=x^{4}+y^{4}z^{2}+x^{2}y^{2}z$, the set $\{(4,0,0),(0,4,2)\}$ satisfies the conclusion of Lemma \ref{lemma:M_J_1}, but it is not a Rado set as any linear map $\varphi:\N_{0}^{3}\rightarrow\N_{0}$ with $\varphi((4,0,0))=\varphi((0,4,2))=c$ necessarily gives also $\varphi((2,2,1))=c$.

However, the following result shows that Lemma \ref{lemma:M_J_1} can be reversed if we add a maximality hypothesis on $J$.

\begin{prop}\label{lemma:M_J_2}
Let $J\subseteq\supp(P)$ be such that $M(J)$ satisfies the columns condition but, for any $\alpha\in\supp(P)\setminus J$, $M(J\cup\{\alpha\})$ does not satisfy the columns condition. Then, $J$ is a Rado set.
\end{prop}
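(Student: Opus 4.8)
The plan is to exhibit an explicit Rado partition of $\supp(P)$ one of whose blocks is $J$. Write $J=\{\alpha_1,\dots,\alpha_k\}$ and let $V=\{\vec t\in\Q^n:M(J)\vec t=\vec 0\}$ be the rational solution space of $M(J)$. Since the entries of $M(J)$ are differences of exponents, hence integers, and the coefficients of the linear maps $\phi$ are positive integers, the whole problem lives over $\N$, so the classical Rado Theorem \ref{theorem:RadosTheorem} applies directly. Observe that $\phi$ with coefficient vector $\vec t$ is constant on $J$ precisely when $\vec t\in V$, and that for $\beta,\gamma\in\supp(P)$ it identifies $\beta$ with $\gamma$ precisely when $(\beta-\gamma)\cdot\vec t=0$.

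First I would extract the content of the maximality hypothesis: for every $\alpha\in\supp(P)\setminus J$ the set $V_\alpha:=V\cap(\alpha-\alpha_1)^{\perp}$ is a \emph{proper} subspace of $V$. Indeed, $V_\alpha$ is exactly the solution space of $M(J\cup\{\alpha\})$, so if $V_\alpha=V$ then $M(J)$ and $M(J\cup\{\alpha\})$ would have identical solutions in $\N^n$, hence the same partition-regularity status; by Rado's Theorem this would force them to have the same columns-condition status, contradicting the assumption that $M(J)$ satisfies the columns condition while $M(J\cup\{\alpha\})$ does not.

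Next I would produce the candidate partition. Declare $\beta\sim\gamma$ iff $(\beta-\gamma)\cdot\vec t=0$ for all $\vec t\in V$, and let $(J_0,\dots,J_l)$ be the partition of $\supp(P)$ into $\sim$-classes. All of $J$ lies in one class by the definition of $V$, and the previous step shows that no $\alpha\notin J$ is $\sim$-equivalent to $\alpha_1$; hence that class is exactly $J$, so $J$ is one of the blocks. A $c$-monochromatic $\phi$ with coefficients $\vec t$ determines precisely this partition if and only if $\vec t\in V$ (so no class is split) and $\vec t\notin V_{\beta,\gamma}:=V\cap(\beta-\gamma)^{\perp}$ for every pair $\beta\not\sim\gamma$ (so no two classes are merged); by the definition of $\sim$, each such $V_{\beta,\gamma}$ is a proper subspace of $V$.

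It therefore remains to prove the statement I expect to be the main obstacle: for every finite coloring $c$ of $\N$ there are infinitely many $c$-monochromatic $\vec t\in V\cap\N^n$ lying outside the finite union $\bigcup_{\beta\not\sim\gamma}V_{\beta,\gamma}$ of proper subspaces. The difficulty is that Rado's Theorem only yields \emph{some} monochromatic solution, whereas here I must also guarantee nondegeneracy for the \emph{fixed} coloring $c$ — the failure of partition regularity of the augmented systems supplies only \emph{some} adversarial coloring, which is of no direct help. To overcome this I would invoke a robust form of Rado's Theorem: the monochromatic solutions of a columns-condition system can be located inside a monochromatic Deuber set (an $(m,p,c)$-set), whose solutions are governed by finitely many free parameters ranging over an infinite set. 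The task then reduces to showing that this parametrized family is not trapped in any single $V_{\beta,\gamma}$; this is exactly where the free parameters are essential, as they move the solution within $V$ transversally to each proper subspace, so that all but a lower-dimensional set of parameter choices avoid every $V_{\beta,\gamma}$ at once. Choosing parameters generically then yields the required infinite family of nondegenerate monochromatic maps, and hence shows that $J$ is a Rado set.
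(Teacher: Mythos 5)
Your reduction is set up correctly: the observation that $V_\alpha=V\cap(\alpha-\alpha_1)^{\perp}$ is the solution space of $M(J\cup\{\alpha\})$, and the deduction of its properness from the maximality hypothesis via both directions of Rado's theorem, are sound, and defining the candidate partition canonically (via $\beta\sim\gamma$ iff $(\beta-\gamma)\cdot\vec{t}=0$ for all $\vec{t}\in V$) is a legitimate way to get a partition uniform in the coloring. But the step you yourself flag as the main obstacle is a genuine gap, and the mechanism you propose to close it fails. Deuber's theorem gives you a \emph{fixed} substitution pattern, i.e.\ a fixed linear map from the generators of the $(m,p,c)$-set into the solution set, and nothing guarantees that the image of this map is transversal to (or even not contained in) a given proper subspace $V_{\beta,\gamma}$. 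Concretely, for the columns-condition equation $t_1+t_2+t_3-t_4=0$ the standard pattern is $(t_1,t_2,t_3,t_4)=(x_1,x_2,x_2,x_1+2x_2)$: it has two free parameters, but its image lies identically inside the proper subspace $\{t_2=t_3\}$ of the three-dimensional kernel. So ``choosing parameters generically'' avoids nothing unless one first proves the pattern's image meets the complement of every $V_{\beta,\gamma}$ --- which is precisely the content of the claim, not a consequence of the $(m,p,c)$ machinery. Note also that in your architecture even the separation of $J$ from the classes of $K$ (avoiding $V_{\alpha,\alpha_1}$ for $\alpha\notin J$) rests on this same unproven nondegeneracy statement; maximality only gives you $\alpha\not\sim\alpha_1$, i.e.\ that $V_{\alpha,\alpha_1}$ is proper, not that monochromatic solutions outside it exist for every coloring.

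The paper's proof avoids this entirely by \emph{not} insisting on the canonical $\sim$-partition. For each coloring $c$ it considers the infinitely many $c$-monochromatic $\vec{t}\in\ker M(J)$, pigeonholes them over the finitely many partitions of $K=\supp(P)\setminus J$ induced by $\alpha\mapsto\alpha\cdot\vec{t}$, and then shows the family of ``partitions realized infinitely often'' has the finite intersection property, via a product coloring $c(a)=\prod_i p_i^{c_i(a)}$ which makes $T(c)=T(c_1)\cap\dots\cap T(c_v)$; since there are only finitely many partitions, some single $\mathcal{P}$ is realized by infinitely many monochromatic maps for \emph{every} coloring, whatever $\mathcal{P}$ turns out to be (its classes are automatically separated from each other because they are fibers of $\alpha\mapsto\alpha\cdot\vec{t}$). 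The maximality hypothesis is used only once, to rule out a class of $K$ sharing its value with $J$: if that happened uniformly, the system $M(J\cup\{\alpha\})\vec{t}=\vec{0}$ would be partition regular and the converse direction of Rado's theorem would force $M(J\cup\{\alpha\})$ to satisfy the columns condition. To repair your proof you should either adopt this pigeonhole route, or genuinely prove the stronger ``nondegenerate Rado theorem'' (monochromatic kernel vectors avoiding any finite union of proper subspaces of the kernel, for every coloring); the latter is a nontrivial statement that does not appear in the paper and would need its own argument, e.g.\ via Hindman--Leader-type augmentations of the system, whose columns condition would then have to be verified.
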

\begin{proof}
Let $c$ be a given finite coloring of $\N_0$. Define 
\begin{equation*}
    T(c) = \{\vec{t}\in\ker M(J):\vec{t}\text{ is }c\text{-monochromatic}\}.
\end{equation*}
Let $K=\supp(P)\setminus J$ and let $\mathfrak{P}$ be the collection of all possible partitions of $K$. Given $\vec{t}\in T(c)$, let $M_1,\dots,M_l$ be the possible values of the map $\alpha\mapsto \prodint{\alpha}{\vec{t}}$ ($\alpha\in\Z^n$) applied to $K$; define $\mathcal{P}_{\vec{t}}$ as the partition of $J$ determined by this map, i.e. $\mathcal{P}_{\vec{t}} = \{K_1,\dots,K_l\}$, where for each $i\in[l]$, $K_i=\{\beta\in K:\beta\cdot\vec{t}=M_i\}$. Since $T(c)$ is infinite and $\mathfrak{P}$ is finite, the set
\begin{equation*}
    S(c) = \big\{\mathcal{P}\in\mathfrak{P}: \{\vec{t}\in T(c): \mathcal{P}=\mathcal{P}_{\vec{t}}\}\text{ is infinite}\big\}
\end{equation*}
is not empty.

We claim that 
\begin{equation*}
\mathcal{S} = \{S(c):c\text{ is a finite coloring of }\N_0\}
\end{equation*}
has the finite intersection property. Indeed, for each $v\in\N$ and $i\in[v]$, let $c_i:\N_0\to\{1,\dots,m_i\}$ be finite coloring of $\N_0$. Let $p_1<\dots<p_v$ be prime numbers and $m=\prod_{i=1}^{v}p_i^{m_i}$; define $c:\N_0\to\{1,\dots, m\}$ as 
\begin{equation*}
    c(a) = \prod_{i=1}^{v} p_i^{c_i(a)}.
\end{equation*}

Let us observe that $T(c) = T(c_1)\cap\dots\cap T(c_v)$; this implies that $S(c)\subseteq S(c_1)\cap\dots\cap S(c_v)$ and settles the claim. As $\mathcal{S}$ is finite, there must be a $\mathcal{P}\in\bigcap\mathcal{S}$.

Finally, if $c$ is any finite coloring of $\N_0$, there must be infinitely monochromatic $\vec{t}$ such that $\mathcal{P}=\mathcal{P}_{\vec{t}}$. By construction, for any distinct $K_1,K_2\in\mathcal{P}$, $\alpha\in K_1$ and $\beta\in K_2$ we must have $\phi_{\vec{t}}(\alpha)\neq\phi_{\vec{t}}(\beta)$. Moreover, if $\alpha\in K$ and $\beta\in J$ are such that $\phi_{\vec{t}}(\alpha)=\phi_{\vec{t}}(\beta)$, then one can prove that $M(J\cup\{\alpha\})$ satisfies the columns condition, which contradicts the hypothesis. Consequently, $\{J\}\cup\mathcal{P}$ is a Rado Partition for $\supp(P)$, which makes $J$ a Rado set.
\end{proof}

Rado partitions are used to introduce upper and lower Rado functionals for a polynomial. 

\begin{definition}\label{definition:rado_functionals}
Let $P\in R[x_1,\dots,x_n]$ and $m\in\N_0$. Given an integer $l\geq m$ and $d_1,\dots,d_m\in\N$, a tuple $(J_0,\dots,J_l,d_1,\dots,d_m)$ is said to be a lower Rado functional of order $m$ for $P$ if for all $r\in\N$ and for all colorings $c$ of $\N_0$ there are infinitely many $c$-monochromatic positive liner maps $\phi$ such that 
\begin{enumerate}
    \item $(J_0,\dots,J_l)$ is the partition determined by $\phi$; and 
    \item if $M_0<\dots<M_l$ is the enumeration of $\supp(P)$ such that $J_i = \phi^{-1}[M_i]$, then
    \begin{enumerate}
        \item for each $i\in[m]$, $M_i-M_0=d_i$; and 
        \item $M_{m+1}-M_m\geq r$.
    \end{enumerate}
\end{enumerate}

Given $d_0,\dots,d_{m-1}\in\N$, we say that $(J_0,\dots,J_l,d_0,\dots,d_{m-1})$ is an upper Rado functional of order $m$ for $P$ if for all $r\in\N$ and for all colorings $c$ of $\N_0$ there are infinitely many $c$-monochromatic positive liner maps $\phi$ such that 
\begin{enumerate}
    \item $(J_0,\dots,J_l)$ is the partition determined by $\phi$; and 
    \item if $M_l<\dots<M_0$ is the enumeration of $\supp(P)$ such that $J_i = \phi^{-1}[M_i]$, then
    \begin{enumerate}
        \item for each $i\in[0,m-1]$, $M_0-M_i=d_i$; and 
        \item $M_{m}-M_{m+1}\geq r$.
    \end{enumerate}
\end{enumerate}
\end{definition}

Let us fix a notation that will help to deal with Rado functionals: given an upper Rado functional $\mathcal{J}=(J_0,\dots,J_l,d_0,\dots,d_{m-1})$ for $P\in R[x_1,\dots,x_n]$ let, for each $i\in\{0,\dots,l\}$, $J_i=\{\alpha_{i,1},\dots,\alpha_{i,k_i}\}$. We let
\begin{equation}\label{equation:def_matrix_urf}
    \hat{N}_\mathcal{J}=
    \begin{pmatrix}
        \alpha_{0,1}-\alpha_{m,1}\\
        \vdots\\
        \alpha_{m-1,1}-\alpha_{m,1}
    \end{pmatrix}
    \;\text{ and }\;
    \hat{A}_\mathcal{J} = 
    \begin{pmatrix}
        M(J_0)\\
        \vdots\\
        M(J_l)\\
        \hat{N}
    \end{pmatrix},
\end{equation}
where the matrices $M(J_{i})$ have been defined in Lemma \ref{lemma:M_J_1}. By the definition above, $\hat{A}_\mathcal{J}$ has $u=k_0+\dots+k_l-l+m-1$ lines. Define $\hat{b}\in\Z^u$ as 
\begin{equation}\label{equation:def_vec_urf}
    \hat{b} = 
    \begin{cases}
        0, &\text{ if } i\leq k_0+\dots+k_l-l-1; \\
        d_i, &\text{ if } k_0+\dots+k_l-l \leq i \leq k_0+\dots+k_l-l+m-1.
    \end{cases}
\end{equation}
Similarly, if $\mathcal{J}=(J_0,\dots,J_l,d_0,\dots,d_{m-1})$ is a lower Rado functional, we define the associated matrices 
\begin{equation*}
    \check{N}_\mathcal{J}=
    \begin{pmatrix}
        \alpha_{m,1}-\alpha_{0,1}\\
        \vdots\\
        \alpha_{m,1}-\alpha_{0,1}
    \end{pmatrix}
    \;\text{ and }\;
    \check{A}_\mathcal{J} = 
    \begin{pmatrix}
        M(J_0)\\
        \vdots\\
        M(J_l)\\
        \check{N}
    \end{pmatrix},
\end{equation*}
and  $\check{b}\in\Z^u$ as 
\begin{equation*}
    \check{b} = 
    \begin{cases}
        0, &\text{ if } i\leq k_0+\dots+k_l-l-1; \\
        d_i, &\text{ if } k_0+\dots+k_l-l \leq i \leq k_0+\dots+k_l-l+m-1.
    \end{cases}\tag{$\sharp\sharp$}
\end{equation*}

Lemma \ref{lemma:M_J_1} can be easily generalized to a result that characterizes upper Rado functionals in terms mixed systems of linear equalities and inequalities. To this end, we need to recall a definition.

\begin{definition}\label{definition:pr_with_delimiters}
Let $b\in\Z^n$. We say that $b\cdot \vec{t} \gg 0$ is partition regular over $\N$ iff for all coloring $c$ of $\N$ and all $l\in\N$ there exists a $c$-monochromatic $\vec{t}\in\N^n$ such that $b\cdot\vec{t}>l$.
\end{definition}

Upper Rado functionals (and lower Rado functionals) can be easily characterized as follows:

\begin{thm}\label{theorem:PR_URF_system}
Let $P\in\Z\left[x_{1},\dots,x_{m}\right]$. Then $\mathcal{J}=(J_0,\dots,J_l,d_0,\dots,d_{m-1})$ is an upper Rado functional for $P\in R[x_1,\dots,x_n]$ if and only if the system
\begin{equation}\label{eq:inhom}
    \left\{
    \begin{matrix}
        \hat{A}_\mathcal{J}\vec{t} = \hat{b}\\
        (\alpha_{m,1}-\alpha_{m+1,1})\cdot\vec{t} & \gg& 0\\
        \vdots & \vdots & \vdots\\
        (\alpha_{m,1}-\alpha_{l,1})\cdot\vec{t} & \gg& 0\\
    \end{matrix}
    \right.
\end{equation}
is infinitely partition regular over $\N$. Similarly, $\mathcal{J}=(J_0,\dots,J_l,d_1,\dots,d_{m})$ is a lower Rado functional if and only if the system 
\begin{equation}\label{eq:inhom}
    \left\{
    \begin{matrix}
        \check{A}_\mathcal{J}\vec{t} = \check{b}\\
        (\alpha_{m+1,1}-\alpha_{m,1})\cdot\vec{t} & \gg& 0\\
        \vdots & \vdots & \vdots\\
        (\alpha_{l,1}-\alpha_{m,1})\cdot\vec{t} & \gg& 0\\
    \end{matrix}
    \right.
\end{equation}
is infinitely partition regular over $\N$.
\end{thm}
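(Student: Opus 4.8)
The plan is to read the theorem as a dictionary between the combinatorial language of the $c$-monochromatic positive linear maps $\phi$ appearing in Definition \ref{definition:rado_functionals} and the algebraic language of the $c$-monochromatic solution vectors $\vec t\in\N^n$ of the mixed system \eqref{eq:inhom}. The bridge is the bijection sending a positive linear map $\phi(a_1,\dots,a_n)=t_1a_1+\dots+t_na_n$ to its coefficient vector $\vec t=(t_1,\dots,t_n)$; by Definition \ref{definition:monochromatic_positive_map}, $\phi$ is $c$-monochromatic exactly when $\vec t$ is, so this bijection respects the monochromaticity over which both sides of the equivalence quantify. Writing $M_i:=\prodint{\alpha_{i,1}}{\vec t}=\phi(\alpha_{i,1})$ for the value $\phi$ takes on the representative of the block $J_i$, I will match each clause of the definition of an upper Rado functional with a block of rows of the system.

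First I would record the three translations, all variants of Lemma \ref{lemma:M_J_1}. The requirement that $\phi$ be constant on a block $J_i$ --- part of asking that $(J_0,\dots,J_l)$ be the partition determined by $\phi$ --- is precisely $M(J_i)\vec t=\vec 0$, and stacking these over $i$ yields the homogeneous rows of $\hat A_\mathcal{J}\vec t=\hat b$. The gap condition prescribing the fixed differences among the top $m+1$ values is exactly the inhomogeneous block $\hat N_\mathcal{J}\vec t=(d_0,\dots,d_{m-1})^{\operatorname{T}}$. Finally, the divergence condition together with the decreasing enumeration $M_0>\dots>M_l$ is equivalent to $\prodint{(\alpha_{m,1}-\alpha_{j,1})}{\vec t}=M_m-M_j\ge r$ for every $j>m$, which is the family of delimiter inequalities $\gg 0$ of Definition \ref{definition:pr_with_delimiters}.

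Both implications then reduce to matching quantifiers. For the forward direction, fix $r$ and a coloring $c$: the upper Rado functional hypothesis furnishes infinitely many $c$-monochromatic $\phi$ realising $(J_0,\dots,J_l)$ with the prescribed differences and $M_m-M_{m+1}\ge r$; by the three translations their coefficient vectors are infinitely many $c$-monochromatic solutions of \eqref{eq:inhom} with delimiters exceeding $r$, so the system is infinitely partition regular. The converse runs the same computation backwards, converting an infinitely partition regular family of monochromatic solutions $\vec t$ into the maps $\phi_{\vec t}$ demanded by the definition.

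The step needing genuine care --- and what I expect to be the main obstacle --- is checking, in the converse direction, that $\phi_{\vec t}$ determines \emph{exactly} $(J_0,\dots,J_l)$ and not merely some coarsening of it: the equalities only force $\phi_{\vec t}$ to be constant on each block, so I must still exclude accidental coincidences $M_i=M_{i'}$ between distinct blocks. Among the top blocks this is automatic once the prescribed differences are strictly monotone, which separates $M_0>\dots>M_{m-1}>M_m$, and the divergence inequalities push each $M_j$ with $j>m$ strictly below $M_m$; what remains is to rule out collisions inside the tail, for which I would argue as in the maximality step of Proposition \ref{lemma:M_J_2}, using that a forced coincidence between two blocks would contradict $(J_0,\dots,J_l)$ being the partition genuinely cut out by the maps. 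Once the fiber structure is pinned down, the lower Rado functional case is symmetric: one replaces $\hat N_\mathcal{J},\hat A_\mathcal{J},\hat b$ by $\check N_\mathcal{J},\check A_\mathcal{J},\check b$, reverses the enumeration to $M_0<\dots<M_l$, and flips the delimiters to $\prodint{(\alpha_{j,1}-\alpha_{m,1})}{\vec t}\gg 0$, after which the argument goes through mutatis mutandis.
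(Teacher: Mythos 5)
Your proof takes essentially the same route as the paper, which disposes of this theorem in one line as ``a modification of Lemma \ref{lemma:M_J_1}'': the dictionary between coefficient vectors of $c$-monochromatic positive linear maps and $c$-monochromatic solutions of the mixed system is exactly the intended argument, and your row-by-row matching of the definition against the blocks $M(J_i)$, $\hat{N}_\mathcal{J}$ and the delimiter inequalities is a faithful (indeed more detailed) rendering of it. One caution: the tail-collision issue you flag in the converse is real and is silently elided by the paper, but your sketched resolution is circular as worded --- you cannot assume $(J_0,\dots,J_l)$ is ``genuinely cut out by the maps,'' since that is precisely what the converse must produce; the honest repair is the finite-pattern pigeonhole together with the product-coloring trick from the proof of Proposition \ref{lemma:M_J_2}, which fixes a single collision pattern on the tail blocks uniformly across all colorings and lets one argue that pattern must be discrete (or else the system's solutions realize a coarsening rather than $\mathcal{J}$).
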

\begin{proof} The proof is just a modification of that of Lemma \ref{lemma:M_J_1}. 
\end{proof}
Given any $A\in\mat{m}{n}{\Q}$, the system $Ax=0$ is partition regular over $\N$ if and only if it is infinitely partition regular; nevertheless, the same does not apply for inhomogeneous linear systems, as the equation $x+y+z=3$ is partition regular over $\N$ but only admits one monochromatic solution, namely $x=y=z=1$. The following result is a simple consequence of Theorem \ref{theorem:PR_linear_inhomogeneous} and characterize infinitely partition regular inhomogeneous linear systems; in later sections, we apply the following result to the study of Rado functionals.

\begin{lem}\label{lemma:infinitely_pr_inhomogeneous_linear}
Given $A\in\mat{m}{n}{\Z}$ and $\vec{b}\in \Z^m\setminus\{0\}$, suppose that the system $A\vec{t}=\vec{b}$ is partition regular and $A$ does not satisfy the columns condition. Then the system $A\vec{t}=\vec{b}$ is not infinitely partition regular. Conversely, if $A$ satisfies the columns condition and there is a constant solution $s\in\Z$ to the system $A\vec{t}=\vec{b}$, then the system $A\vec{t}=\vec{b}$ is infinitely partition regular. 
\end{lem}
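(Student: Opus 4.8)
The plan is to treat both implications through a single device: a \emph{shift coloring} that converts monochromatic solutions of the inhomogeneous system $A\vec{t}=\vec{b}$ into monochromatic solutions of its homogeneous companion $A\vec{t}=\vec{0}$, and conversely. The only external inputs needed are Rado's Theorem (Theorem~\ref{theorem:RadosTheorem}), its inhomogeneous counterpart (Theorem~\ref{theorem:PR_linear_inhomogeneous}), and the fact recalled just above that a homogeneous system is partition regular if and only if it is infinitely partition regular. First I would record one elementary observation used throughout: since $\vec{b}\neq\vec{0}$, the vector $A\vec{1}$ (with $\vec{1}=(1,\dots,1)$) is nonzero, so $A\vec{t}=\vec{b}$ has \emph{at most one} constant solution $k\vec{1}$; in particular the constant solution $s$ appearing below is unique.

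For the first implication, suppose $A\vec{t}=\vec{b}$ is partition regular while $A$ fails the columns condition. By Theorem~\ref{theorem:PR_linear_inhomogeneous} the failure of the columns condition forces the existence of a constant solution $s\in\N$, and $s\geq 1$ because $\vec{b}\neq\vec{0}$. On the other hand, Theorem~\ref{theorem:RadosTheorem} shows that $A\vec{t}=\vec{0}$ is \emph{not} partition regular, so I may fix a finite coloring $\gamma\colon\N\to[k]$ admitting no $\gamma$-monochromatic $\vec{h}\in\N^{n}$ with $A\vec{h}=\vec{0}$. I would then build a finite coloring $c$ of $\N$ by setting $c(m)=\gamma(m-s)$ for $m>s$ and giving each of the finitely many values $m\in\{1,\dots,s\}$ its own private color. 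The point is that a $c$-monochromatic solution $\vec{t}$ either uses a private color, in which case all entries of $\vec{t}$ are equal and $\vec{t}$ is the unique constant solution $s\vec{1}$; or it uses a color of $\gamma$, in which case every $t_i>s$, the shifted tuple $\vec{h}=\vec{t}-s\vec{1}$ lies in $\N^{n}$, is $\gamma$-monochromatic, and satisfies $A\vec{h}=\vec{0}$, contradicting the choice of $\gamma$. Hence $s\vec{1}$ is the only $c$-monochromatic solution, so the system is not infinitely partition regular.

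For the converse, assume $A$ satisfies the columns condition and $A(s\vec{1})=\vec{b}$ for some $s\in\Z$. Now $A\vec{t}=\vec{0}$ is partition regular by Theorem~\ref{theorem:RadosTheorem}, hence infinitely partition regular. Given an arbitrary finite coloring $c$ of $\N$, I would run the shift in the opposite direction: define $c'(m)=c(m+s)$ whenever $m+s\geq 1$ (and arbitrarily otherwise), and refine it to a coloring $c''$ that additionally records whether $m>N$ for a fixed threshold $N\geq|s|$. Applying infinite partition regularity of $A\vec{h}=\vec{0}$ to $c''$ yields infinitely many $c''$-monochromatic $\vec{h}\in\N^{n}$; discarding the finitely many whose entries are all at most $N$, the survivors have every entry exceeding $N\geq|s|$, so $\vec{t}=s\vec{1}+\vec{h}\in\N^{n}$, $A\vec{t}=\vec{b}$, and the $c$-monochromaticity of $\vec{t}$ follows from $c(t_i)=c'(h_i)$ together with the $c''$-monochromaticity of $\vec{h}$. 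Distinct $\vec{h}$ produce distinct $\vec{t}$, giving infinitely many $c$-monochromatic solutions.

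The only genuinely delicate point, and the one I would be most careful about, is the bookkeeping in the converse when the constant solution $s$ is negative: one must guarantee both that $s\vec{1}+\vec{h}$ stays in $\N^{n}$ and that infinitely many solutions survive the shift. This is exactly what the auxiliary threshold $N\geq|s|$ in the refined coloring $c''$ takes care of, by forcing the retained homogeneous solutions to be large enough. Everything else reduces to the two cited forms of Rado's Theorem and the routine verification that the shift preserves monochromaticity.
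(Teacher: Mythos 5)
Your proof is correct and follows essentially the same route as the paper's: both directions run through the same shift-coloring device (a coloring $\chi$ with $\chi(x)=c(x-s)$ for $x>s$ and a spare color otherwise), using Theorem~\ref{theorem:PR_linear_inhomogeneous} to extract the constant solution $s$ and Rado's Theorem to pass between the inhomogeneous system and its homogeneous companion. If anything, your bookkeeping in the converse is more careful than the paper's, which writes $a_i-s$ where $a_i+s$ is meant and does not spell out why infinitely many monochromatic homogeneous solutions have \emph{all} entries exceeding $|s|$ --- exactly the gap your threshold refinement $N\geq|s|$ closes.
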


\begin{proof}
Suppose that $A\vec{t}=\vec{b}$ is partition regular and $A$ does not satisfy the columns condition. Let $k\in\N$, $c:\N\to[k]$ be a coloring; by Condition 2 of Theorem \ref{theorem:PR_linear_inhomogeneous}, there is a $s\in\N$ that is a constant solution to $A\vec{t}=\vec{b}$. Define the coloring $\chi:\N\to[k+1]$ as 
\begin{equation*}
    \chi(x) = 
    \begin{cases}
        c(x-s),& \text{ if } x>s;\text{ or }\\
        k+1,&\text{ otherwise.}
    \end{cases}
\end{equation*}
Since $A\vec{t}=\vec{b}$ is partition regular, one can find a $a_1,\dots,a_n\in\N$ $\chi$-monochromatic such that $A(a_1,\dots,a_n)=\vec{b}$. But this implies that $A(a_1-s,\dots,a_n-s)=0$, and thus we must have that there are $i,j\in[n]$ such that $c(a_i-s)\neq c(a_j-s)$, which implies that $\chi(a_1)=\dots=\chi(a_n)=k+1$ and, by the definition of $\chi$, that $a_1,\dots,a_n\leq s$. 

Conversely, if $A$ satisfies the columns condition and there is $s\in\Z$ that is a constant solution of the system $A\vec{t}=\vec{b}$, then given any $k\in\N$ and any coloring $c:\N\to[k]$, the coloring $\chi$ as defined above produces infinitely many $\chi$-monochromatic $a_1,\dots,a_n\in\N$ such that $A(a_1,\dots,a_n)=0$ and $a_1,\dots,a_n>|s|$. Then, $a_1-s,\dots,a_n-s$ are $c$-monochromatic solutions to $A\vec{t}=\vec{b}$.
\end{proof}

Theorem \ref{theorem:PR_URF_system} has a few interesting consequences, that force strict conditions on Rado partitions. In all the results below, we keep the same notations introduced above. We omit the consequences for the lower Rado functionals, as they are stated and derived in a totally similar fashion. Through the rest of this section $P\in R\left[x_{1},\dots,x_{m}\right]$ and $\mathcal{J}=(J_0,\dots,J_l,d_0,\dots,d_{m-1})$ is an upper Rado functional for $P$.

\begin{cor}\label{corollary:PR_URF_system_1} If $\mathcal{J}$ is an upper Rado functional for $P$, then the system $\hat{A}_\mathcal{J}\vec{t}=\vec{b}$ admits a constant solution $s\in\Z$, and the matrix $\hat{A}_\mathcal{J}$ satisfies the columns condition. 
\end{cor}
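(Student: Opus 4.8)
The plan is to deduce the corollary directly from Theorem~\ref{theorem:PR_URF_system} by discarding the inequality part of the associated mixed system and then invoking the inhomogeneous linear theory.

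First I would apply Theorem~\ref{theorem:PR_URF_system}: since $\mathcal{J}$ is an upper Rado functional for $P$, the mixed system there associated to $\mathcal{J}$ is infinitely partition regular over $\N$. Every solution of that system is in particular a solution of its equational part $\hat{A}_\mathcal{J}\vec{t}=\hat{b}$, so the subsystem $\hat{A}_\mathcal{J}\vec{t}=\hat{b}$ is itself infinitely partition regular over $\N$, and hence partition regular. Feeding this into Theorem~\ref{theorem:PR_linear_inhomogeneous} immediately yields a constant solution: in both alternatives of that theorem the constant solution lies in $\Z$ (in the first case even in $\N\subseteq\Z$), which gives the desired $s\in\Z$.

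For the columns condition I would split on whether $\hat{b}=\vec{0}$, that is, on whether $m=0$. If $\hat{b}=\vec{0}$ the system is homogeneous, so its partition regularity over $\N$ is already equivalent to the columns condition by Rado's Theorem~\ref{theorem:RadosTheorem}, and we are done (with $s=0$). If $\hat{b}\neq\vec{0}$, I would use Lemma~\ref{lemma:infinitely_pr_inhomogeneous_linear} in contrapositive form: that lemma asserts that a partition regular inhomogeneous system whose matrix fails the columns condition cannot be infinitely partition regular. Since $\hat{A}_\mathcal{J}\vec{t}=\hat{b}$ is simultaneously partition regular and infinitely partition regular, the matrix $\hat{A}_\mathcal{J}$ is forced to satisfy the columns condition.

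The only genuine subtlety, and the step I would treat most carefully, is the bookkeeping around $\hat{b}=\vec{0}$: Lemma~\ref{lemma:infinitely_pr_inhomogeneous_linear} is stated only for $\vec{b}\in\Z^m\setminus\{0\}$, so the homogeneous order-zero case must be handled directly through Rado's Theorem rather than through the lemma. Everything else is a clean unwinding of the definitions, with no computation beyond reindexing required.
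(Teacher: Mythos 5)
Your proposal is correct and follows essentially the same route as the paper's own proof: restrict the infinitely partition regular mixed system of Theorem~\ref{theorem:PR_URF_system} to its equational part, then combine Theorem~\ref{theorem:PR_linear_inhomogeneous} with Lemma~\ref{lemma:infinitely_pr_inhomogeneous_linear} to extract the constant solution $s\in\Z$ and the columns condition. Your explicit treatment of the case $\hat{b}=\vec{0}$ (i.e.\ $m=0$) via Rado's Theorem~\ref{theorem:RadosTheorem} is a welcome refinement that the paper's one-line proof leaves implicit, since Lemma~\ref{lemma:infinitely_pr_inhomogeneous_linear} is indeed stated only for $\vec{b}\neq\vec{0}$.
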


\begin{proof} By Theorem \ref{theorem:PR_URF_system}, the system $A_\mathcal{J}\vec{t}=\vec{b}$ admits infinitely many monochromatic solutions; hence, by Condition 2 of the Theorem \ref{theorem:PR_linear_inhomogeneous} and the Lemma \ref{lemma:infinitely_pr_inhomogeneous_linear}, there is a constant solution $s\in\Z$ and $A$ must satisfy the columns condition.
\end{proof}

Theorem \ref{theorem:PR_URF_system} has particularly restrictive consequences when $m\geq 1$ in the Rado functionals:

\begin{cor}\label{corollary:PR_URF_system_2} 
If $m\geq 1$, the sets $J_0,\dots,J_l$ are homogeneous. 
\end{cor}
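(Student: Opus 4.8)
The plan is to read homogeneity straight off the constant solution provided by Corollary \ref{corollary:PR_URF_system_1}. By that corollary the system $\hat{A}_\mathcal{J}\vec{t}=\hat{b}$ admits a constant solution $s\in\Z$, i.e. $\hat{A}_\mathcal{J}(s,\dots,s)^{\operatorname{T}}=\hat{b}$. The entire argument then splits into two observations: that the rows coming from the blocks $M(J_0),\dots,M(J_l)$ encode precisely the degree differences inside each $J_i$, and that the hypothesis $m\geq 1$ forces $s\neq 0$.

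For the first observation, recall that a generic row of the block $M(J_i)$ is $\alpha_{i,u}-\alpha_{i,1}$ and that its matching entry of $\hat{b}$ is $0$. Evaluating such a row at the constant vector $(s,\dots,s)^{\operatorname{T}}$ and summing coordinates gives
\[
s\bigl(|\alpha_{i,u}|-|\alpha_{i,1}|\bigr)=0,
\]
because $\sum_{v}\bigl(\alpha_{i,u}(v)-\alpha_{i,1}(v)\bigr)=|\alpha_{i,u}|-|\alpha_{i,1}|$. Hence, as soon as $s\neq 0$, we obtain $|\alpha_{i,u}|=|\alpha_{i,1}|$ for every $i\in[0,l]$ and every $u$, which is exactly the assertion that each $J_i$ is homogeneous.

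It therefore remains to rule out $s=0$, and this is the heart of the matter and the only place where $m\geq 1$ enters. Since $m\geq 1$ and $l\geq m$, the block $\hat{N}_\mathcal{J}$ is nonempty and contains the row $\alpha_{0,1}-\alpha_{m,1}$. Fix any witnessing monochromatic map $\phi$ for $\mathcal{J}$ (one exists by Definition \ref{definition:rado_functionals}, e.g.\ for the trivial colouring); by construction $\phi$ determines the partition $(J_0,\dots,J_l)$ with distinct values $M_0>M_1>\dots>M_l$, so the $\hat{b}$-entry attached to that row equals $(\alpha_{0,1}-\alpha_{m,1})\cdot\vec{t}=M_0-M_m$, where $\vec{t}$ is the coefficient vector of $\phi$. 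As $m\geq 1$ and the $M_i$ are distinct, this entry is strictly positive, whence $\hat{b}\neq\vec{0}$. Consequently $s=0$ is impossible, for it would force $\hat{b}=\hat{A}_\mathcal{J}(0,\dots,0)^{\operatorname{T}}=\vec{0}$. Thus $s\neq 0$, and the previous paragraph concludes the proof.

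I expect the genuine obstacle to be precisely the step $s\neq 0$: one must exhibit a strictly positive prescribed gap in $\hat{b}$, and this is exactly what separates $m\geq 1$ from $m=0$. Indeed, for $m=0$ the block $\hat{N}_\mathcal{J}$ is empty, $\hat{b}=\vec{0}$, the constant solution $s=0$ becomes admissible, and a non-homogeneous top fibre $J_0$ can genuinely occur; so the hypothesis $m\geq 1$ is essential. A minor but necessary bookkeeping check is the alignment between each row of $\hat{A}_\mathcal{J}$ and its entry of $\hat{b}$, ensuring that the $M(J_i)$-rows really sit opposite zeros while the distinguished $\hat{N}_\mathcal{J}$-row sits opposite a positive gap value.
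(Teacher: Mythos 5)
Your proof is correct and takes essentially the same route as the paper's: invoke Corollary \ref{corollary:PR_URF_system_1} for a constant solution $s\in\Z$, observe that $m\geq 1$ forces $\hat{b}\neq\vec{0}$ and hence $s\neq 0$, and then read off $|\alpha_{i,u}|=|\alpha_{i,1}|$ by evaluating the $M(J_i)$-rows at $(s,\dots,s)$ and summing coordinates. Your explicit verification that the $\hat{N}_\mathcal{J}$-rows sit opposite strictly positive entries of $\hat{b}$ merely spells out the paper's one-line assertion that $\vec{b}\neq 0$ when $m\geq 1$.
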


\begin{proof} By Corollary \ref{corollary:PR_URF_system_1}, there is $s\in\Z$ so that $\hat{A}_\mathcal{J}(s,\dots,s)=\vec{b}$. As $m\geq 1$, $\vec{b}\neq 0$, which implies that $s\neq 0$. Consequently, for each $i\in\{0,\dots,i\}$, $M(J_i)(s,\dots,s)=0$. By the definition of $M(J_i)$, for each $j\in\{2,\dots,k_i\}$
\begin{align*}\label{corollary:rado_sets_homogeneous_eq_1}
      0 = & \big(\alpha_{i,j}(1)-\alpha_{i,1}(1)\big)s + \big(\alpha_{i,j}(2)-\alpha_{i,1}(2)\big)s+\dots + \big(\alpha_{i,j}(n)-\alpha_{i,1}(n)\big)s \\
      = & \big(|\alpha_{i,j}|-|\alpha_{i,1}|\big)s.
\end{align*}
Since $s\neq 0$, it must be $|\alpha_{i,j}|-|\alpha_{i,1}|=0$, namely each $J_{i}$ is homogeneous. 
\end{proof}

\begin{cor}\label{corollary:PR_URF_system_3} 
If $m\geq 1$, there exists an $s\in\Z$ such that for all $i\in\{0,\dots,m-1\}$, $\beta_i\in J_i$ and $\alpha\in J_m$
\begin{equation*}
    |\beta_i| = |\alpha| + \frac{d_i}{s}.
\end{equation*}
\end{cor}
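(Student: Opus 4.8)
The plan is to read the claimed identity off the constant solution produced by Corollary~\ref{corollary:PR_URF_system_1}, using only the bottom block $\hat{N}_\mathcal{J}$ of the matrix $\hat{A}_\mathcal{J}$ together with the homogeneity already established in Corollary~\ref{corollary:PR_URF_system_2}. First I would apply Corollary~\ref{corollary:PR_URF_system_1} to fix a constant solution $s\in\Z$ with $\hat{A}_\mathcal{J}(s,\dots,s)^{\operatorname{T}}=\hat{b}$. Since $m\geq 1$ we have $\hat{b}\neq\vec{0}$, and the argument opening the proof of Corollary~\ref{corollary:PR_URF_system_2} then forces $s\neq 0$; this is the $s$ in the statement, and its nonvanishing is exactly what makes the quotient $d_i/s$ legitimate.

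Next I would isolate the rows of $\hat{A}_\mathcal{J}$ coming from $\hat{N}_\mathcal{J}$. By~\eqref{equation:def_matrix_urf} the row indexed by $i\in\{0,\dots,m-1\}$ is $\alpha_{i,1}-\alpha_{m,1}$, and by~\eqref{equation:def_vec_urf} the matching entry of $\hat{b}$ is $d_i$. Evaluating this equation at the constant vector $(s,\dots,s)$ gives
\begin{equation*}
    d_i=\bigl(\alpha_{i,1}-\alpha_{m,1}\bigr)\cdot(s,\dots,s)=s\bigl(|\alpha_{i,1}|-|\alpha_{m,1}|\bigr),
\end{equation*}
so that $|\alpha_{i,1}|-|\alpha_{m,1}|=d_i/s$ for every such $i$.

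Finally I would upgrade this from the distinguished representatives $\alpha_{i,1},\alpha_{m,1}$ to arbitrary elements. Because $m\geq 1$, Corollary~\ref{corollary:PR_URF_system_2} tells us each $J_i$ is homogeneous, whence $|\beta_i|=|\alpha_{i,1}|$ for all $\beta_i\in J_i$ and $|\alpha|=|\alpha_{m,1}|$ for all $\alpha\in J_m$. Substituting into the previous display yields $|\beta_i|=|\alpha|+d_i/s$, as claimed. I do not expect a genuine obstacle here: the content is entirely bookkeeping of which rows of $\hat{A}_\mathcal{J}$ carry the inhomogeneous data $d_i$, and the only delicate point is the nonvanishing of $s$, which is precisely where the hypothesis $m\geq 1$ enters.
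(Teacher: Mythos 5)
Your proof is correct and follows essentially the same route as the paper's: evaluate the rows of $\hat{N}_\mathcal{J}$ at the constant solution $(s,\dots,s)$ from Corollary~\ref{corollary:PR_URF_system_1} to get $d_i=s\bigl(|\alpha_{i,1}|-|\alpha_{m,1}|\bigr)$, then divide by $s$. The only cosmetic differences are that you derive $s\neq 0$ from $\hat{b}\neq\vec{0}$ (as in Corollary~\ref{corollary:PR_URF_system_2}) while the paper reads it off from $d_i\neq 0$ in the displayed identity, and that you make explicit the appeal to homogeneity of the $J_i$ needed to pass from the representatives $\alpha_{i,1},\alpha_{m,1}$ to arbitrary $\beta_i\in J_i$ and $\alpha\in J_m$ --- a step the paper leaves implicit.
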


\begin{proof} By Corollary \ref{corollary:PR_URF_system_1} , there is a solution $s\in\Z$ to the system $A\vec{t}=b$. By the definition of the matrix $\hat{N}_\mathcal{J}$, for all $i\in\{1,\dots,m-1\}$,
\begin{align*}\label{corollary:rado_sets_homogeneous_eq_2}
      d_i = & \big(\alpha_{i,1}(1)-\alpha_{m,1}(1)\big)s + \big(\alpha_{i,1}(2)-\alpha_{m,1}(2)\big)s+\dots + \big(\alpha_{i,1}(n)-\alpha_{m,1}(n)\big)s \\
      = & \big(|\alpha_{i,1}|-|\alpha_{m,1}|\big)s.\tag{$\star\star$}
\end{align*}
Hence, we have that $s\neq 0$. For all $i\in[m-1]$, by Equation \ref{corollary:rado_sets_homogeneous_eq_2}, we must have $|\alpha_{i,1}|=|\alpha_{m,1}|+\frac{d_i}{s}$, which concludes the proof. \end{proof}

\begin{cor}\label{corollary:PR_URF_system_4}  If $P\in R\left[x_{1},\dots,x_{n}\right]$ is homogeneous and $\left(J_{0},\dots,J_{l},d_{0},\dots,d_{m-1}\right)$ is an upper Rado functional, then necessarily $m=0$.
\end{cor}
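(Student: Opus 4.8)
The plan is a short proof by contradiction that leans entirely on Corollary \ref{corollary:PR_URF_system_3}. Suppose that $\mathcal{J}=(J_0,\dots,J_l,d_0,\dots,d_{m-1})$ were an upper Rado functional for the homogeneous polynomial $P$ with $m\geq 1$; I will show that homogeneity is incompatible with the positivity of the parameters $d_i$, so that $m\geq 1$ is impossible.

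First I would record the one structural input I need. Because $m\geq 1$, Corollary \ref{corollary:PR_URF_system_3} applies and yields an integer $s$, which its proof shows to be nonzero, such that
\begin{equation*}
    |\beta_i| = |\alpha| + \frac{d_i}{s}
\end{equation*}
for every $i\in\{0,\dots,m-1\}$, every $\beta_i\in J_i$ and every $\alpha\in J_m$.

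The decisive observation is that $P$ being homogeneous is a statement about the \emph{whole} support, not merely about the individual blocks as in Corollary \ref{corollary:PR_URF_system_2}: since $J_i,J_m\subseteq\supp(P)$ and all multi-indices of $\supp(P)$ share one common degree, we have $|\beta_i|=|\alpha|$. Feeding this into the displayed identity gives $d_i/s=0$, and as $s\neq 0$ this forces $d_i=0$ for every $i\in\{0,\dots,m-1\}$. But the $d_i$ belong to $\N$ by Definition \ref{definition:rado_functionals}, hence cannot vanish; since $m\geq 1$ there is at least one of them (for instance $d_0$), and $d_0=0$ is absurd. Therefore no upper Rado functional of positive order can exist for a homogeneous $P$, i.e. $m=0$.

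I do not expect any real obstacle here, since the heavy lifting is already packaged in Corollaries \ref{corollary:PR_URF_system_1} and \ref{corollary:PR_URF_system_3}; the only point requiring care is the one I emphasized above, namely that the homogeneity of $P$ must be used to equate degrees of multi-indices lying in \emph{different} blocks $J_i$ and $J_m$. This cross-block comparison is exactly what collapses the defining gaps $d_i$ to zero and contradicts their being positive members of $\N$, whereas the weaker block-wise homogeneity supplied by Corollary \ref{corollary:PR_URF_system_2} alone would not suffice.
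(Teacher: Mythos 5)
Your proof is correct and is essentially the paper's own argument in lightly repackaged form: the paper derives the contradiction by noting that the constant solution $s$ together with homogeneity forces $\vec{b}=0$ in the system $\hat{A}_\mathcal{J}\vec{t}=\vec{b}$, and the identity $|\beta_i|=|\alpha|+\frac{d_i}{s}$ from Corollary \ref{corollary:PR_URF_system_3} that you invoke is exactly that same computation, so your cross-block collapse $d_i=0$ contradicting $d_i\in\N$ is the same contradiction. Your closing remark correctly identifies the key point, namely that global homogeneity of $\supp(P)$ (not just the block-wise homogeneity of Corollary \ref{corollary:PR_URF_system_2}) is what kills the gaps $d_i$.
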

\begin{proof}
Suppose that $m\geq 1$ and let $\mathcal{J}=(J_0,\dots, J_l)$ be an upper Rado functional for $P$ of order $m$. Then, we have that the system $A_\mathcal{J}\vec{t}=\vec{b}$ is infinitely partition regular and there is a constant solution $s\in\Z$ to it. As $P$ is homogeneous, for each $i,j\in[0,l]$, $\alpha\in J_i$ and $\beta\in J_j$ we have that $|\alpha|=|\beta|$; hence, by the construction of the matrix $A_\mathcal{J}$ and the existence of the constant solution, it must be the case that $\vec{b}=0$, which is absurd by the Definition \ref{definition:rado_functionals}. 
\end{proof}

The above corollaries show that having an upper Rado functional with $m\geq 1$ forces very restrictive conditions on the Rado partition, both on its Rado sets and the increments in the functional.

To conclude this section, we want to characterize the partition regularity of systems of the form \ref{eq:inhom}. When $\vec{b}$ is null and $\gg$ is substituted by $>$, the partition regularity of such systems has been settled by N. Hindman and I. Leader in \cite{HindmanLeader1998} and it is displayed as Theorem \ref{theorem:pr_inequalities_hindman_leader} below. Before proceeding to such characterizations, we need  a characterization of the partition regularity phenomena via ultrafilters that we employ henceforth. Partition regularity phenomena, as well as several other Ramsey theoretical notions, are connected with ultrafilter through Theorem \ref{theorem:PR_iff_ultrafilters} below. For an introduction of the basic theory of ultrafilters and its applications in Ramsey theory we refer to the monograph \cite{HindmanStrauss2011}. 

\begin{definition}\cite[Definition 3.10]{HindmanStrauss2011}
Let $\mathcal{C}$ a collection of sets; we say that $\mathcal{C}$ is partition regular\footnote{Or also called \emph{weekly partition regular}.} if given any coloring $c$ of $\bigcup\mathcal{C}$, one can find a $c$-monochromatic $A\in \mathcal{C}$; i.e. for all $a,a'\in A$ one has $c(a)=c(a')$.  
\end{definition}

For instance, a system of polynomial equations over $R$ is partition regular over $S\subseteq R$ if and only if the collection of all subsets of $S$ that contain solutions of the system is a partition regular collection; the analogous applies to systems of inequalities, the partition regular relation of Definition \ref{definition:pr_with_delimiters}
or any mixed systems with such binary relations. 
\begin{thm}\cite[Theorem 3.11]{HindmanStrauss2011}\label{theorem:PR_iff_ultrafilters}
A collection $\mathcal{C}$ of subsets of a set $S$ is partition regular if and only if for all $A\in\mathcal{C}$ there is an ultrafilter 
\begin{equation*}
   \U\subseteq \{B\subseteq S:\exists C\in\mathcal{C}(B\subseteq C)\} 
\end{equation*}
on $S$ such that $A\in \U$. Such ultrafilter is said to witnesses (or is a witness for) the partition regularity of $\mathcal{C}$.
\end{thm}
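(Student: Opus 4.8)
The plan is to prove both implications through the standard dictionary between finite colorings and ultrafilters, the only real work being the construction of the ultrafilter in the forward direction. Throughout, call a set $D\subseteq S$ \emph{$\mathcal{C}$-avoiding} if no member of $\mathcal{C}$ is contained in $D$; then the requirement $\U\subseteq\mathcal{D}$ is precisely that $\U$ contains no $\mathcal{C}$-avoiding set, equivalently that every $B\in\U$ contains some $C\in\mathcal{C}$. I will use freely that a filter with the finite intersection property extends to an ultrafilter (equivalently, that $\beta S$ is compact).

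For the backward direction I would fix an arbitrary $A\in\mathcal{C}$ and an arbitrary finite partition $A=C_1\cup\dots\cup C_r$, and let $\U$ be an ultrafilter with $A\in\U$ and $\U\subseteq\mathcal{D}$ as provided by the hypothesis. Since $A\in\U$ and $\U$ is an ultrafilter, exactly one part $C_i$ belongs to $\U$; because $\U\subseteq\mathcal{D}$, this $C_i$ contains some $B\in\mathcal{C}$, and $B\subseteq C_i$ exhibits the required monochromatic member. Hence $\mathcal{C}$ is partition regular.

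For the forward direction I would fix $A\in\mathcal{C}$ and build the ultrafilter by a finite-intersection-property argument. Consider the family $\mathcal{F}=\{A\}\cup\{S\setminus D: D\text{ is }\mathcal{C}\text{-avoiding}\}$. The crux is to show $\mathcal{F}$ has the finite intersection property: given $\mathcal{C}$-avoiding sets $D_1,\dots,D_n$, I must check $A\setminus(D_1\cup\dots\cup D_n)\neq\emptyset$. If it were empty, then $A=\bigcup_i (A\cap D_i)$ is a finite partition of the member $A\in\mathcal{C}$, so partition regularity yields a part, contained in some $D_i$, that contains a member $B\in\mathcal{C}$; as $B\subseteq D_i$, this contradicts that $D_i$ is $\mathcal{C}$-avoiding. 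Thus $\mathcal{F}$ has the finite intersection property and extends to an ultrafilter $\U$ on $S$. By construction $A\in\U$, and $\U$ contains $S\setminus D$ for every $\mathcal{C}$-avoiding $D$, so it contains no $\mathcal{C}$-avoiding set; that is, every member of $\U$ contains a member of $\mathcal{C}$, i.e. $\U\subseteq\mathcal{D}$.

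The main obstacle is the forward direction, and within it two points deserve care. First, the finite-intersection step must feed the cover $A\subseteq D_1\cup\dots\cup D_n$ into partition regularity applied to $A$ \emph{itself}, which is exactly what forces the monochromatic member to land inside one of the avoiding sets and produce the contradiction. Second, one must place the complements of \emph{all} $\mathcal{C}$-avoiding sets into $\mathcal{F}$ before extending, rather than adjoining them afterwards: an ultrafilter extending a filter that excludes only finitely many avoiding sets need not exclude the rest, so it is the simultaneous finite intersection property that guarantees $\U\subseteq\mathcal{D}$.
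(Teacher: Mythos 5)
The paper itself offers no proof of this statement --- it is imported verbatim (in fact, slightly garbled) from Hindman--Strauss --- so your attempt can only be measured against the statement and the standard argument. Your backward direction is correct, and you sensibly read the displayed family as ``every member of $\U$ \emph{contains} a member of $\mathcal{C}$''; as printed, with $B\subseteq C$, the condition would be vacuous in all of the paper's applications (where $S\in\mathcal{C}$), so that silent correction is the right call. The forward direction, however, has a genuine gap, and it sits exactly at the step you identified as the crux. From the cover $A\subseteq D_1\cup\dots\cup D_n$ you conclude that ``partition regularity yields a part, contained in some $D_i$, that contains a member $B\in\mathcal{C}$''. But the paper's Definition 3.10 (and Hindman--Strauss's) only concerns colorings of $\bigcup\mathcal{C}$: extending your partition of $A$ by one extra color on $\bigcup\mathcal{C}\setminus A$, the monochromatic member it produces may lie entirely outside $A$, so no contradiction with the $D_i$ being $\mathcal{C}$-avoiding arises. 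This is not a repairable slip, because the ``for all $A\in\mathcal{C}$ there is a witness with $A\in\U$'' form you are proving is false under that definition. Take $S=\N$ and $\mathcal{C}=\bigl\{\{x,y,x+y\}:x,y\in\N\bigr\}$, partition regular by Schur's theorem, and $A=\{1,2,3\}\in\mathcal{C}$: since $A$ is finite, any ultrafilter with $A\in\U$ contains some singleton $\{a\}$, $a\in A$, and $\{a\}$ contains no member of $\mathcal{C}$ (every member has at least two elements); hence no witnessing ultrafilter contains $A$. Concretely, your family $\mathcal{F}$ fails the finite intersection property here: $\{1\},\{2\},\{3\}$ are $\mathcal{C}$-avoiding and cover $A$.

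What is true --- and is both what Hindman--Strauss's Theorem 3.11 actually asserts and the only form the paper subsequently uses (``there is an ultrafilter $\U\in\beta\N$ such that any $A\in\U$ contains a solution'') --- is the existential version: $\mathcal{C}$ is partition regular if and only if \emph{some} ultrafilter $\U$ on $S$ has the property that every $B\in\U$ contains a member of $\mathcal{C}$. Your FIP mechanism proves this verbatim once you drop $\{A\}$ from $\mathcal{F}$ and run the cover argument over $\bigcup\mathcal{C}$ rather than over $A$: if $\bigcap_i(S\setminus D_i)=\emptyset$ for $\mathcal{C}$-avoiding $D_1,\dots,D_n$, then $S=D_1\cup\dots\cup D_n$; coloring each point of $\bigcup\mathcal{C}$ by the least $i$ with the point in $D_i$, Definition 3.10 yields a monochromatic $B\in\mathcal{C}$ with $B\subseteq D_i$, contradicting avoidance, so the complements of all avoiding sets have the FIP and any ultrafilter extending them is a witness. (Your argument would be fully correct under the stronger, hereditary notion of partition regularity --- every finite partition of every member of $\mathcal{C}$ has a cell containing a member of $\mathcal{C}$ --- but that is not the notion the paper defines.) In short: the quantifier ``for all $A\in\mathcal{C}$ \dots\ $A\in\U$'' in the paper's statement is a mis-transcription of Hindman--Strauss, and your proof, by following it faithfully, inherits the error at the one step where partition regularity is applied to a partition of $A$ instead of a partition of $\bigcup\mathcal{C}$.
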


Hence, for instance, a system of Diophantine equations is partition regular over $\N$ if and only if there is an ultrafilter $\U\in\beta\N$ such that any $A\in\U$ contains a solution to the system in question. The analogous applies to systems of inequalities, the partition regular relation of Definition \ref{definition:pr_with_delimiters}
or any mixed systems with such binary relations. 

We now proceed to the characterization of upper Rado functionals via the partition regularity of systems of inequalities and equations. 

\begin{thm}\cite[Theorem 2]{HindmanLeader1998}\label{theorem:pr_inequalities_hindman_leader}
Let $A$ be a $u\times v$ matrix of rational entries and for each $j\in[1,d]$, let $\vec{b_j}=(b_{j1},\dots,b_{jn})$ be a vector of rational entries. Then the following are equivalent:
\begin{enumerate}
    \item the system 
        \begin{equation*}
            \left\{
                \begin{matrix}
                    A\vec{t} & = & 0\\
                    b_1\cdot \vec{t} & > & 0\\
                    \vdots & \vdots &\vdots\\
                    b_d\cdot \vec{t} & > & 0
                \end{matrix}
            \right.
        \end{equation*}
        is partition regular over $\N$;
    \item there are rationals $q_1,\dots,q_d$ such that the system of equations
        \begin{equation*}
            \left\{
                \begin{matrix}
                    A\vec{t} & = & 0\\
                    \vec{b}_1\cdot \vec{t} - q_1z_1 & = & 0\\
                    \vdots & \vdots &\vdots\\
                    \vec{b}_d\cdot \vec{t} - q_dz_d & = & 0
                \end{matrix}
            \right.
        \end{equation*}
    on the variables $\vec{t}=(t_1,\dots,t_n)$, $z_1,\dots,z_d$ is partition regular over $\N$.
    \end{enumerate}
\end{thm}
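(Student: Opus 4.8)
The plan is to prove the equivalence of the two systems in Theorem \ref{theorem:pr_inequalities_hindman_leader} by showing that a monochromatic solution of either system produces a monochromatic solution of the other, working through the ultrafilter characterization (Theorem \ref{theorem:PR_iff_ultrafilters}) to package the combinatorial content cleanly. The rationals $q_1,\dots,q_d$ are the crux: they encode the asymptotic ratios that the strict inequalities $\vec{b}_j\cdot\vec{t}>0$ impose, and the whole difficulty is identifying the correct values of $q_j$ and proving they exist.

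\textbf{Direction (2)}$\Rightarrow$\textbf{(1).} This is the routine direction. First I would assume the augmented equation system has a monochromatic solution $(\vec{t},z_1,\dots,z_d)\in\N^{n+d}$ for suitable positive rationals $q_1,\dots,q_d$ (clearing denominators so all variables lie in $\N$ is harmless). Given a finite coloring $c$ of $\N$, partition regularity of the augmented system yields a $c$-monochromatic solution; since the $t_i$ and the $z_j$ are all of the same color and the $z_j\in\N$ are positive, each $\vec{b}_j\cdot\vec{t}=q_j z_j>0$ whenever $q_j>0$. Thus the same $\vec{t}$ witnesses a monochromatic solution of the inequality system, giving (1). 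The only subtlety is ensuring the $q_j$ are strictly positive and that $z_j\neq 0$ in the monochromatic solution, which is where one must be slightly careful that the color class containing the solution excludes $0$ (one can always recolor to quarantine $0$, exactly as in the proof of Lemma \ref{lemma:infinitely_pr_inhomogeneous_linear}).

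\textbf{Direction (1)}$\Rightarrow$\textbf{(2).} This is the substantive direction and the main obstacle. Here I would invoke Theorem \ref{theorem:PR_iff_ultrafilters}: partition regularity of the inequality system gives, for a fixed target set, an ultrafilter $\U\in\bN$ every member of which contains a solution to the inequality system. The idea is to use $\U$ to read off the ratios $\vec{b}_j\cdot\vec{t}/t_{i}$ (for a fixed reference coordinate) as a single rational limit along $\U$. Concretely, one passes to a nonstandard extension (or equivalently works with the quotient of the ultrapower) where a $\U$-generic solution $\vec{t}$ of the inequalities lives in $\sN$, and each $\vec{b}_j\cdot\vec{t}$ is an infinite positive element; the finite-to-infinite ratios that the coloring preserves will force $\vec{b}_j\cdot\vec{t}=q_j z_j$ for a standard rational $q_j$ and some $z_j$ of the same color. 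The hard part is showing that a \emph{single} rational $q_j$ works uniformly across all colorings, rather than a color-dependent one. I would handle this by a finite-intersection / compactness argument over the (finitely many possible) tuples of ratios: since each $q_j$ ranges over a finite set of admissible values determined by the matrix $A$ and the $\vec{b}_j$, an argument exactly analogous to the finite-intersection-property claim in the proof of Proposition \ref{lemma:M_J_2} lets one pin down one tuple $(q_1,\dots,q_d)$ that is compatible with all colorings simultaneously; that tuple is the required one, and the augmented system is then partition regular by construction.

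The expected main obstacle is thus the uniform choice of the $q_j$ in the forward direction. The inequalities only guarantee positivity, not a fixed ratio, so a priori the ratio $\vec{b}_j\cdot\vec{t}/z_j$ could vary with the coloring; the key insight is that the structure of $A$ (its kernel and the linear functionals $\vec{b}_j$ restricted to that kernel) restricts these ratios to a finite rational set, after which a finite-intersection-property argument over colorings selects a common value. Since this is a cited result of Hindman and Leader, the cleanest route for our purposes is to reduce (1)$\Rightarrow$(2) to their argument and merely verify that the hypotheses transfer to our rational setting, rather than reconstruct their full proof here.
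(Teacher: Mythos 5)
The first thing to say is that the paper contains no proof of this statement at all: it is imported verbatim, with citation, from Hindman and Leader \cite{HindmanLeader1998}, and your closing suggestion --- defer to their argument rather than reconstruct it --- is exactly what the paper does. So the comparison is really between your sketch and the Hindman--Leader proof. Your direction (2)$\Rightarrow$(1) is essentially fine, with one caveat you half-noticed: positivity of the $q_j$ cannot be ``ensured'' by recoloring, it must be part of the statement. As literally written, (2)$\Rightarrow$(1) is false for arbitrary rationals: take no equations, $\vec{b}_1=(-1,-1)$ and $q_1=-2$; then $t_1+t_2=2z_1$ is partition regular but $-t_1-t_2>0$ has no solutions in $\N$. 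The paper itself silently assumes $q_j>0$ when it later invokes the theorem in Lemma \ref{lemma:hindman-leader-inifinite-inequalities}.

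The genuine gap is in your direction (1)$\Rightarrow$(2), and it is twofold. First, the pivotal claim that ``each $q_j$ ranges over a finite set of admissible values determined by the matrix $A$ and the $\vec{b}_j$'' is false in general: with no equations and $\vec{b}=(1,1)$, every positive rational $q=(t_1+t_2)/z$ is realized by solutions in $\N$, so the candidate set is infinite. This kills the transfer of the finite-intersection-property argument from Proposition \ref{lemma:M_J_2}, which works there precisely because the set of partitions of the finite set $\supp(P)\setminus J$ is finite; over an infinite parameter space no common tuple $(q_1,\dots,q_d)$ can be extracted by intersecting finitely many colorings, and pinning down a single rational tuple valid for all colorings is exactly the hard content of Hindman and Leader's proof, which your sketch replaces with a gesture. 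Second, your nonstandard step asserts that for a $\U$-generic solution each $\vec{b}_j\cdot\vec{t}$ is ``an infinite positive element''; this is unwarranted for the strict inequality $>$. The system $x-y>0$ is partition regular with $x-y=1$ identically, so along a witnessing ultrafilter the value $\vec{b}\cdot\vec{t}$ may be a standard finite integer. Unboundedness of $\vec{b}_j\cdot\vec{t}$ is precisely the content of the relation $\gg$, which the paper \emph{derives from} this theorem in Lemma \ref{lemma:hindman-leader-inifinite-inequalities}; presupposing it in a proof of the theorem is circular. So the substantive direction of your proof does not go through as sketched, and the only sound part of your plan --- reducing to the cited result --- coincides with the paper's actual treatment.
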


However, to much of our surprise, and at best of our knowledge, the general partition regularity of systems like \ref{eq:inhom} has not been characterized yet in the literature. Therefore, we provide such a characterization below. Firstly, we show that using $\gg$ and $>$ are equivalent when it comes to partition regularity.

\begin{lem}\label{lemma:hindman-leader-inifinite-inequalities}
Let $A\in\mat{m}{n}{\Z}$, $b_1,\dots,b_k\in\Z^n$. Then, the following are equivalent
\begin{enumerate}
    \item the system 
    \begin{equation*}\label{eq1}
        \left\{
            \begin{matrix}
                A\vec{t} & = & 0\\
                b_1\cdot\vec{t} & \gg & 0\\
                \vdots & \vdots & \vdots\\
                b_k\cdot\vec{t} & \gg & 0\\
            \end{matrix}
        \right.\tag{$\#$}
    \end{equation*}
    is partition regular over $\N$; 
    \item the system 
    \begin{equation*}\label{eq1}
        \left\{
            \begin{matrix}
                A\vec{t} & = & 0\\
                b_1\cdot\vec{t} & > & 0\\
                \vdots & \vdots & \vdots\\
                b_k\cdot\vec{t} & > & 0\\
            \end{matrix}
        \right.\tag{$\#$}
    \end{equation*}  
is partition regular over $\N$. 
\end{enumerate}
\end{lem}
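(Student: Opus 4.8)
The plan is to prove the two directions separately, observing that $(1)\Rightarrow(2)$ is immediate while $(2)\Rightarrow(1)$ carries all of the content. For $(1)\Rightarrow(2)$: unwinding Definition \ref{definition:pr_with_delimiters}, partition regularity of the system with $\gg$ means that for every coloring $c$ of $\N$ and every $l\in\N$ there is a $c$-monochromatic $\vec{t}\in\N^n$ with $A\vec{t}=0$ and $b_i\cdot\vec{t}>l$ for all $i$. Specializing to $l=0$ yields a $c$-monochromatic solution with $b_i\cdot\vec{t}>0$ for all $i$, which is exactly partition regularity of the system with $>$.

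For the substantial direction $(2)\Rightarrow(1)$, I would fix an arbitrary coloring $c$ of $\N$ and an arbitrary $l\in\N$ and produce a $c$-monochromatic solution with all $b_i\cdot\vec{t}>l$. The key structural observations are that the system is dilation-invariant and that it has integer data: since $A\vec{t}=0$ is homogeneous we have $A(N\vec{t})=0$ for every positive integer $N$, and $b_i\cdot(N\vec{t})=N\,(b_i\cdot\vec{t})$; moreover, because all entries are integers, any solution of the $>$ system satisfies $b_i\cdot\vec{t}\geq 1$. Hence dilating a solution of the $>$ system by $N=l+1$ forces each $b_i\cdot\vec{t}$ above $l$, and the only thing to secure is that the dilate stays monochromatic.

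To secure this, I would pass to the pulled-back coloring $c'(x):=c\big((l+1)x\big)$ and apply hypothesis $(2)$ to $c'$, obtaining a $c'$-monochromatic solution $\vec{t}$ of the $>$ system. By construction $c\big((l+1)t_1\big)=\dots=c\big((l+1)t_n\big)$, so $\vec{s}:=(l+1)\vec{t}$ is $c$-monochromatic; it satisfies $A\vec{s}=0$ and $b_i\cdot\vec{s}=(l+1)(b_i\cdot\vec{t})\geq l+1>l$ for every $i$. As $c$ and $l$ were arbitrary, this is precisely partition regularity of the system with $\gg$. The only real obstacle is the preservation of monochromaticity under dilation, and it dissolves entirely upon replacing $c$ by $c'$; in particular, no appeal to the ultrafilter machinery or to Theorem \ref{theorem:pr_inequalities_hindman_leader} is needed for this lemma.
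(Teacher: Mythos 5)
Your proof is correct, and it takes a genuinely different route from the paper's. The paper proves $(2)\Rightarrow(1)$ by first applying Theorem \ref{theorem:pr_inequalities_hindman_leader} to replace the strict inequalities with equations $b_i\cdot\vec{t}-q_iz_i=0$ for suitable positive rationals $q_i$, and then, since the augmented system is homogeneous and partition regular, taking a free ultrafilter $\U$ witnessing its partition regularity; because every tail $[d,+\infty)$ lies in $\U$, one finds solutions with all variables at least $d$, and choosing $d>\max_i rq_i^{-1}$ forces $b_i\cdot\vec{t}=q_iz_i>r$. You instead exploit dilation invariance directly: pulling the coloring back along $x\mapsto(l+1)x$, applying hypothesis (2), and dilating the resulting monochromatic solution by $l+1$; integrality of the data gives $b_i\cdot\vec{t}\geq 1$, hence $b_i\cdot\bigl((l+1)\vec{t}\bigr)\geq l+1>l$, and monochromaticity is preserved by construction of the pulled-back coloring. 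All steps check out, and your argument is self-contained --- no Hindman--Leader theorem and no ultrafilter machinery --- while proving slightly more, namely that the monochromatic solutions of the $\gg$ system can be taken with all coordinates divisible by $l+1$ (and the same trick extends to rational $b_i$ after clearing denominators). What the paper's heavier route buys is coherence with the surrounding material: the passage through the system with auxiliary variables $z_i$ is precisely the equivalence with condition (3) that reappears in Theorem \ref{theorem:infinite-non-homegenous-inequalities}, and the ultrafilter formulation is the one the paper reuses in later sections, so the authors get those ingredients set up at no extra cost, whereas your proof isolates the lemma as an elementary fact.
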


\begin{proof} That (1) entails (2) is immediate. Conversely, suppose that (2) holds. By Theorem \ref{theorem:pr_inequalities_hindman_leader} there are rationals $q_{1},\dots,q_{k}>0$ such that the system 
    \begin{equation*}
        \left\{
            \begin{matrix}
                A\vec{t} & = & 0\\
                b_1\cdot\vec{t} -q_1z_1& = & 0\\
                \vdots & \vdots & \vdots\\
                b_k\cdot\vec{t} -q_kz_k& = & 0\\
            \end{matrix}
        \right.
    \end{equation*}
is partition regular over $\N$ on the variables $\vec{t}$ and $z_1,\dots,z_k$. Since this system is homogeneous, there is a free ultrafilter $\U\in\bN$ such that every set $A\in\U$ contains a solution to this system (see e.g. \cite[Theorem 3]{BeiglbockBergelsonDownarowicz2009}). Let $r\in\N$ and pick a $d\in\N$ such that $d > \max\{rq_1^{-1},\dots,rq_k^{-1}\}$; then the set $I=[d,+\infty[$ is an element of $\U$ and thus there are $t_1,\dots,t_n,z_1,\dots,z_k\in I$ such that $A\vec{t}=0$ and for each $i\in\{1,\dots,k\}$, $b_i\cdot\vec{t}-q_iz_i = 0$. Evidently this implies that $b_i\cdot\vec{t}>r$. 
\end{proof}

We can now characterize the partition regularity of mixed inhomogeneous systems of linear equations and inequalities.

\begin{thm}\label{theorem:infinite-non-homegenous-inequalities}
Let $A\in\mat{m}{n}{\Z}$, $b_1,\dots,b_k\in\Z^n$ and $d\in\Z^m$. Then, the following are equivalent
\begin{enumerate}
    \item the system 
    \begin{equation*}\label{equation:system_inf_pra_ineq_1}
        \left\{
            \begin{matrix}
                A\vec{t} & = & d\\
                b_1\cdot\vec{t} & \gg & 0\\
                \vdots & \vdots & \vdots\\
                b_k\cdot\vec{t} & \gg & 0\\
            \end{matrix}
        \right.\tag{$\star$}
    \end{equation*}
    is partition regular;
    \item either there is a constant $s\in\N$ such that $A(s,\dots,s)^{T}=d$ and, for all $i\in\{1,\dots,k\}$, $\sum_{j=1}^{n} b_{ij} > 0$; or there is a constant $s\in\Z$ such that $A(s,\dots,s)=d$ and the system 
    \begin{equation*}\label{equation:system_inf_pra_ineq_2}
        \left\{
            \begin{matrix}
                A\vec{t} & = & 0\\
                b_1\cdot\vec{t} & \gg & 0\\
                \vdots & \vdots & \vdots\\
                b_k\cdot\vec{t} & \gg & 0\\
            \end{matrix}
        \right.\tag{$\star\star$}
    \end{equation*}
    is partition regular;
\item either there is a $s\in\N$ constant solution to $A\vec{t}=d$ and, for all $i\in[k]$, $\sum_{j=1}^{n}b_{ij}>0$; or there is a constant solution $s\in\Z$ to $A\vec{t}=d$ and there are positive $q_1,\dots, q_k\in\Q$ such that the system 
    \begin{equation*}
        \left\{
            \begin{matrix}
                A\vec{t} & = & 0\\
                b_1\cdot\vec{t} -q_1z_1& = & 0\\
                \vdots & \vdots & \vdots\\
                b_k\cdot\vec{t} -q_kz_k& = & 0\\
            \end{matrix}
        \right.
    \end{equation*}
    is partition regular.
\end{enumerate}
\end{thm}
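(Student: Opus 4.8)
The plan is to run the equivalences as a short cycle, first disposing of $(2)\Leftrightarrow(3)$, which is essentially formal, and then concentrating on $(1)\Leftrightarrow(2)$, where the real content is the interaction between the affine constraint $A\vec{t}=d$ and the growth conditions $b_i\cdot\vec{t}\gg 0$. For $(2)\Leftrightarrow(3)$ the first disjuncts are verbatim identical, so nothing is needed there; for the second disjuncts both demand a constant integer solution to $A\vec{t}=d$, so it suffices to match the two homogeneous subsystems. By Lemma \ref{lemma:hindman-leader-inifinite-inequalities} the system $(\star\star)$ with the relation $\gg$ is partition regular exactly when the same system with strict inequalities $>$ is, and Theorem \ref{theorem:pr_inequalities_hindman_leader} rewrites the latter as the existence of positive rationals $q_1,\dots,q_k$ making the system with the equations $b_i\cdot\vec{t}-q_iz_i=0$ partition regular. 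Chaining these two equivalences identifies $(2b)$ with $(3b)$.

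For the backward direction $(2)\Rightarrow(1)$ I would argue from each disjunct separately. From the first disjunct the key observation is that a constant tuple $(v,\dots,v)$ is monochromatic for \emph{every} colouring, since all its coordinates carry one value; when the constant solutions of $A\vec{t}=d$ form an unbounded family one has $b_i\cdot(v,\dots,v)=v\sum_j b_{ij}$, and as each $\sum_j b_{ij}>0$, letting $v$ grow pushes every $b_i\cdot\vec{t}$ past any prescribed threshold, giving partition regularity directly. From the second disjunct I would use the additive shift by the constant solution $s\in\Z$: given a colouring $c$ and a bound $l$, I apply partition regularity of $(\star\star)$ to the shifted colouring $\chi$ of Lemma \ref{lemma:infinitely_pr_inhomogeneous_linear} to obtain a monochromatic solution $\vec{u}$ of $A\vec{u}=0$ with all coordinates exceeding $|s|$ and with each $b_i\cdot\vec{u}$ arbitrarily large; then $\vec{t}=\vec{u}+s(1,\dots,1)$ lies in $\N^n$, solves $A\vec{t}=d$, is $c$-monochromatic, and satisfies $b_i\cdot\vec{t}=b_i\cdot\vec{u}+s\sum_j b_{ij}$, which still exceeds $l$ because only a fixed constant has been added.

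For the forward direction $(1)\Rightarrow(2)$ I would first forget the inequalities: any solution of $(\star)$ solves $A\vec{t}=d$, so that equation is partition regular and Theorem \ref{theorem:PR_linear_inhomogeneous} supplies a constant solution $s\in\Z$ together with a dichotomy on whether $A$ satisfies the columns condition. If $A$ does satisfy it, I aim for $(2b)$: subtracting the fixed constant $s$ from monochromatic solutions of $(\star)$ produces solutions of $(\star\star)$ whose $b_i$-values remain unbounded, since subtracting a constant cannot destroy the relation $\gg$, so $(\star\star)$ is partition regular. If $A$ does not satisfy the columns condition, then by Lemma \ref{lemma:infinitely_pr_inhomogeneous_linear} one may colour $\N$ so that $A\vec{t}=d$ admits only finitely many monochromatic solutions whenever $d\neq 0$; on such a finite set each $b_i\cdot\vec{t}$ is bounded, contradicting the $\gg$ requirement of $(\star)$. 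Hence necessarily $d=0$, the constant solution may be taken in $\N$, the row sums of $A$ vanish, the constant tuples form an unbounded family, and testing $(\star)$ against them forces each $\sum_j b_{ij}>0$, landing in $(2a)$.

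The step I expect to be the main obstacle is the bookkeeping around the shift by the constant solution: one must simultaneously preserve monochromaticity when passing between $c$ and the shifted colouring $\chi$, keep all coordinates positive so the shifted tuple lies in $\N^n$, and check that adding or subtracting the fixed quantity $s\sum_j b_{ij}$ neither creates nor destroys $b_i\cdot\vec{t}\gg 0$. Getting the threshold arithmetic right in both directions, together with correctly isolating the degenerate case $d=0$ that separates the two disjuncts of $(2)$, is where care is required; throughout, the ultrafilter reformulation of Theorem \ref{theorem:PR_iff_ultrafilters} can be used to package the quantifier ``for all colourings and all $l$'' uniformly.
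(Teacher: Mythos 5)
Your overall route coincides with the paper's: you dispose of (2)$\Leftrightarrow$(3) by chaining Lemma \ref{lemma:hindman-leader-inifinite-inequalities} with Theorem \ref{theorem:pr_inequalities_hindman_leader}, and you run (1)$\Leftrightarrow$(2) through the dichotomy of Theorem \ref{theorem:PR_linear_inhomogeneous} together with the shifted colouring $\chi$ of Lemma \ref{lemma:infinitely_pr_inhomogeneous_linear}; the threshold computation $b_i\cdot(\vec t - s(1,\dots,1)) = b_i\cdot\vec t - s\sum_j b_{ij}$ is exactly the one in the paper. However, one branch of your (1)$\Rightarrow$(2) does not hold as written: in the case where $A$ fails the columns condition you conclude ``necessarily $d=0$, the constant solution may be taken in $\N$, the row sums of $A$ vanish, the constant tuples form an unbounded family.'' But vanishing row sums mean that the sum of \emph{all} columns of $A$ is $\vec 0$, i.e.\ the columns condition holds with the single block $I_0=[n]$ --- contradicting your case hypothesis. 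In fact, with $d=\vec 0$ and $A$ not satisfying the columns condition, Rado's Theorem already provides a colouring with no monochromatic solution of $A\vec t=\vec 0$ at all, so this branch is vacuous rather than a source of disjunct (2a). The paper instead treats case (a) of Theorem \ref{theorem:PR_linear_inhomogeneous} directly, using the coordinatewise bound $t_j\le s$ on $\chi$-monochromatic solutions to extract $\sum_j b_{ij}>0$.

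The second soft spot is your (2a)$\Rightarrow$(1) step, which you explicitly condition on ``when the constant solutions of $A\vec t=d$ form an unbounded family.'' Disjunct (2a) does not supply this: since $A(s,\dots,s)^{T}=sA(1,\dots,1)^{T}$, the constant solution is \emph{unique} unless $A(1,\dots,1)^{T}=\vec 0$ (which forces $d=\vec 0$), and a single constant tuple cannot witness $b_i\cdot\vec t\gg 0$ in the sense of Definition \ref{definition:pr_with_delimiters}. This is a genuine gap --- for instance $A=(1)$, $d=(5)$, $b_1=(1)$ satisfies (2a), yet the system $t=5$, $t\gg 0$ is not partition regular --- though in fairness the paper glosses the very same point by declaring ``(3) implies (1) is trivial,'' so the statement itself is delicate in this degenerate case. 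The robust content, in both your write-up and the paper's, is the equivalence of (1) with the second disjuncts via the shift-by-$s$ arguments; if you strengthen (2a) to demand $A(1,\dots,1)^{T}=\vec 0$ and $d=\vec 0$ (so that the constant solutions really are unbounded, as your argument needs), your proof goes through and is essentially the paper's.
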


\begin{proof}
Let us show that (1) implies (2). As the system (\ref{equation:system_inf_pra_ineq_1}) is partition regular, in particular the system $A\vec{t} = d$ is partition regular; consequently, by Theorem \ref{theorem:PR_linear_inhomogeneous}, there are two alternatives: either
\begin{enumerate}[label=(\alph*)]
    \item there is a $s\in\N$ such that $A(s,\dots,s)=d$; or
    \item $A\vec{t}=0$ is partition regular and there is a $s\in \Z$ such that $A\vec{s}=d$. 
\end{enumerate}
If (a) occurs and $A\vec{t}=0$ is not partition regular, by Lemma \ref{lemma:infinitely_pr_inhomogeneous_linear}, any monochromatic solution $t_1,\dots,t_n$ of $A\vec{t}=d$ must satisfy $t_1,\dots,t_n\leq s$; as such, we have that $\sum_{j=1}^{n} b_{ij} s > b_1t_1+\dots+b_nt_n > 0$, which implies that $\sum_{j=1}^{n} b_{ij} >0$.

Now, if (b) occurs, let $c:\N\to[l]$ and $r\in\N$; and define $\chi:\N\to[l+1]$ as 
\begin{equation*}
    \chi(x) = 
    \begin{cases}
        c(x-s),& \text{ if } x>s;\text{ or }\\
        l+1,&\text{ otherwise.}
    \end{cases}
\end{equation*}
As the systems (\ref{equation:system_inf_pra_ineq_1}) and $A\vec{t}=0$ are partition regular, there are $\chi$-monochromatic $t_1,\dots,t_n\in\N$ satisfying $A(t_1,\dots,t_n)=d$, $t_1,\dots,t_n>s$ and, for each $i\in[k]$,
\begin{equation*}
    b_i\cdot(t_1,\dots,t_n) > r+ \left|s\sum_{j=1}^{n}b_{ij}\right|
\end{equation*}
Clearly we have that $t_1-s,\dots,t_n-s$ form a $c$-monochromatic solution to $A\vec{t}=0$. Moreover, 
\begin{equation*}
    b_i\cdot(t_1-s,\dots,t_n-s) = b_i\cdot (t_1,\dots,t_n) - s\sum_{j=1}^{n}b_{ij} > r.
\end{equation*}
Hence, $t_1-s,\dots,t_n-s$ is a $c$-monochromatic solution to the system (\ref{equation:system_inf_pra_ineq_2}).

By Theorem \ref{theorem:pr_inequalities_hindman_leader}, we have that (2) implies (3), and (3) implies (1) is trivial. 
\end{proof}

We can summarize the results of this section as follows: in order to show that a given $(J_0,\dots,J_l,d_0,\dots,d_{m-1})$ is an upper Rado functional for $P\in R[x_1,\dots,x_n]$, it is necessary and sufficient to show that there exists $q_{m+1},\dots,q_l\in\Q_{>0}$ such that the system $O(t_1,\dots,t_n,z_{m+1},\dots,z_l)=\vec{b}$ is infinitely partition regular, where 

\begin{equation*}
    O = 
    \begin{pmatrix}
        A & \boldsymbol{0}_{u\times v} \\
        \begin{pmatrix}
            \alpha_{m,1}-\alpha_{m+1,1} \\
            \vdots\\
            \alpha_{m,1}-\alpha_{l,1}
        \end{pmatrix}
        & Q
    \end{pmatrix},
\end{equation*}
$\boldsymbol{0}_{u\times v}$ is the $u\times v$ ($u=k_0+\dots+k_l-l+m+1$ and $v=l-m-1$) matrix with $0$ in all entries,
\begin{equation*}
    Q = 
    \begin{pmatrix}
        -q_{m+1} & \phantom{-}0 & \dots & \phantom{-}0\\
        \phantom{-}0 & -q_2 & \dots & \phantom{-}0\\
        \vdots & \vdots & \ddots & \vdots\\
        \phantom{-}0 & \phantom{-}0 & \dots & -q_l
    \end{pmatrix}
\end{equation*}
and $b\in\Z^{u+l-m-1}$ is given by 
\begin{equation*}
    b(i) = 
    \begin{cases}
        0, & \text{ if } i\leq k_0+\dots+k_l-l\\
        d_i, & \text{ if } k_0+\dots+k_l-l < i \leq m+1\\
        0, & \text{ if } m+1 < i.
    \end{cases}
\end{equation*}
where $d_m=0$.

Observe also that, when this happens with $m\geq 1$, the restrictions to $\left(J_{0},\dots,J_{l},d_{0},\dots,d_{m-1}\right)$ imposed by Corollaries \ref{corollary:PR_URF_system_1} and \ref{corollary:PR_URF_system_3} apply. We will take that in consideration to study polynomials in three variables in the Section \ref{section:examples_3_var}.

\section{Rado conditions}\label{section:rado_conditions}

In this section we discuss some necessary and sufficient conditions for the partition regularity of equations that are formulated in terms of upper Rado functionals. Let us start by recalling the definition of the maximal Rado condition (see \cite[Definition 2.16]{BarretMoreiraLupiniMoreira2021}).

\begin{definition}
Let $P\in\Z[x_1,\dots,x_n]$, $P(\boldsymbol{x})=\sum_{\alpha}c_\alpha\boldsymbol{x}^\alpha$ and $\mathcal{J}=(J_0,\dots,J_l,d_0,\dots,d_{m-1})$ be a upper Rado functional for $P$. Setting $d_m=0$, for all $q\in\N$ define the monovariate polynomial 
\begin{equation*}
    Q_{\mathcal{J},q}(w) = \sum_{i=0}^{m} q^{d_i}\left(\sum_{\alpha\in J_i} c_{\alpha} w^{|\alpha|}\right).
\end{equation*}
We say that the polynomial $P\in\Z[x_1,\dots,x_n]$ satisfies the \emph{maximal Rado condition} if for all $q\in\N\setminus\{1\}$ there exists an upper Rado polynomial $\mathcal{J}$ for $P$ such that $Q_{\mathcal{J},q}$ has a real root $1\leq w\leq q$.
\end{definition}

Notice that in this case, by Corollary \ref{corollary:PR_URF_system_3}, when $m\geq 1$ there are $L_0,\dots,L_m\in\N$ and $s\in\Z$ such that for all $i\in\{0,\dots,m\}$ and $\alpha\in J_i$, $L_i=|\alpha|$ and thus
\begin{equation*}
    Q_{\mathcal{F},q}(w) =  w^{L_m}\sum_{i=0}^{m} q^{d_i}\left(\sum_{\alpha\in J_i} c_{\alpha} w^{\frac{d_i}{s}}\right).
\end{equation*}

Hence, $P$ satisfies the maximal Rado condition iff for all $q\in\N$ there exist an upper Rado polynomial $\mathcal{F}=(J_0,\dots,J_l,d_0,\dots,d_{m-1})$  for $P$ and $s\in\Z$ dividing $d_0,\dots,d_{m-1}$ such that
\begin{equation*}
    R_{\mathcal{F},q}(w) =  \sum_{i=0}^{m} q^{d_i}\bar{c}_i w^{\frac{d_i}{s}} = \sum_{i=0}^{m} \bar{c}_i\left(qw^{\frac{1}{s}}\right)^{d_i}
\end{equation*}
has a real root in $[1,q]$, where 
\begin{equation*}
    \bar{c}_i = \sum_{\alpha\in J_i} c_{\alpha}.
\end{equation*}
Observe also that, when $m=0$ (for example, when $P$ is homogeneous) the above condition gets the simpler form $\overline{c}_{0}=0$. 

The importance of the maximal Rado condition, as proved in \cite[Theorem 3.1]{BarretMoreiraLupiniMoreira2021}, is that it gives a necessary condition to the partition regularity of polynomial equations over $\N$. However, in general, the maximal Rado condition alone is not sufficient to prove the partition regularity of a given polynomial; actually, it is not even sufficient to prove that it has non-constant solutions.

\begin{exam}
Let $P(x,y,z)=x^3+y^3-z^3$, $J_0=\{(3,0,0),(0,0,3)\}$ and $J_1=\{(0,3,0)\}$. Then, we have that 
\begin{equation*}
    O = 
    \begin{pmatrix}
        \phantom{-}3 & \phantom{-}0 & -3 & \phantom{-}0\\
        \phantom{-}3 & -3 & \phantom{-}0 & -1
    \end{pmatrix}
\end{equation*}
satisfies the columns condition, which implies that the system 
\begin{equation*}
    \left\{
        \begin{matrix}
             3t_1 & = &  3t_2\\
             3t_1 & \gg&  3t_2
        \end{matrix}
    \right.
\end{equation*}
is infinitely partition regular. Hence, $\mathcal{J}=(J_0,J_1)$ is an upper Rado functional for $P$ of order $0$. Moreover, for each $q\in\N\setminus\{1\}$ we have that
\begin{equation*}
    Q_{\mathcal{J},q}(w) = 1-1 = 0,
\end{equation*}
which proves that $P$ satisfies the maximal Rado condition. However, $P(x,y,z)=0$ is not partition regular, since this equation does not admit any non-trivial integral solutions. 
\end{exam}

Therefore, a natural question that arises is: under which additional hypothesis is the maximal Rado condition sufficent to prove the partition regularity of a given equation?

To answer this question, we introduce a strengthened notion.

\begin{definition}
A complete Rado functional is an upper Rado functional of the form $\left(J_{0},\dots,J_{l},d_{0},\dots,d_{l-1}\right)$, i.e. an upper Rado functional of maximal order.
\end{definition}

Fixing a $r\in R$, define $\exp_r:\N\to R$ as $\exp_r(x)=r^x$ and let $\overline{\exp}_r:\beta\N\to\beta R$ be the unique continuous extension of $\exp_r$ over $\beta\N$. Then, for $\U\in\beta\N$, we have that $U\in\overline{\exp}_r(\U)$ if and only if $\{x\in\N:r^x\in U\}\in\U$.  Moreover, if $S\subseteq R$ is closed under exponentiation, we have that $\overline{\exp}_s(\U)\in\beta S$ for all $s\in S$ and $\U\in\beta\N$.  

\begin{thm}\label{theorem:pr_polynomials_crf}
Let $P\in R[x_1,\dots,x_n]$, $P(\boldsymbol{x})=\sum_{\alpha}c_\alpha \boldsymbol{x}^\alpha$, and $\mathcal{J}=(J_0,\dots,J_l,d_0,\dots,d_{l-1})$ be a complete Rado functional for $P$. Define $d_l=0$ and 

\begin{equation*}
    Q_{P,\mathcal{J}}(w):=\sum_{i=0}^{l}\overline{c_{i}}w^{d_{i}},
\end{equation*}
where for each $i\in[0,l]$ let $\overline{c}_i=\sum_{\alpha\in J_i}c_\alpha$. Suppose that $S\subseteq R$ is infinite and closed under exponentiation and that $Q_{P,\mathcal{J}}$ has root in $S$. Then $P$ is partition regular over $S$. 
\end{thm}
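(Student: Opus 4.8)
The plan is to fix a finite coloring $c$ of $S$ together with a root $w\in S$ of $Q_{P,\mathcal{J}}$, transport $c$ to a coloring of the exponents via $w$, feed that coloring into the defining property of the complete Rado functional to obtain a monochromatic positive linear map $\phi$, and then read off a monochromatic zero of $P$ by evaluating the monomials of $P$ at the point $(w^{t_1},\dots,w^{t_n})$, where $t_1,\dots,t_n\in\N$ are the coefficients of $\phi$. Since $c$ is arbitrary this yields partition regularity over $S$ (and in fact, as the functional provides infinitely many such $\phi$, one even gets infinitely many solutions).

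Concretely, first I would define $c'\colon\N_0\to\{1,\dots,r\}$ by $c'(x)=c(w^x)$; this is legitimate precisely because $S$ is closed under exponentiation, so $w^x\in S$ for every $x$ and $c$ can be applied. As $\mathcal{J}=(J_0,\dots,J_l,d_0,\dots,d_{l-1})$ is a complete, hence upper, Rado functional, its defining property applied to $c'$ produces a $c'$-monochromatic positive linear map $\phi$ with coefficients $t_1,\dots,t_n$ such that $(J_0,\dots,J_l)$ is the partition of $\supp(P)$ determined by $\phi$. Writing $M_0>\dots>M_l$ for the values of $\phi$ on $\supp(P)$ with $J_i=\phi^{-1}[M_i]$, maximality of the order forces every gap to be rigidly prescribed, namely $M_i-M_l=d_i$ (with $d_l=0$); this is exactly the feature that makes the functional \emph{complete}.

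The heart of the argument is the substitution $a_j:=w^{t_j}$. Each monomial satisfies $\boldsymbol{x}^\alpha\mapsto\prod_{j}(w^{t_j})^{\alpha(j)}=w^{\phi(\alpha)}$, and $\phi$ is constant equal to $M_i$ on each $J_i$, so
\[
P(a_1,\dots,a_n)=\sum_{i=0}^{l}\Big(\sum_{\alpha\in J_i}c_\alpha\Big)w^{M_i}=\sum_{i=0}^{l}\overline{c_i}\,w^{M_i}=w^{M_l}\sum_{i=0}^{l}\overline{c_i}\,w^{d_i}=w^{M_l}\,Q_{P,\mathcal{J}}(w),
\]
which vanishes because $w$ is a root of $Q_{P,\mathcal{J}}$. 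Meanwhile $a_1,\dots,a_n\in S$ by closure under exponentiation, and they are $c$-monochromatic: $c'$-monochromaticity of $\phi$ means $c'(t_1)=\dots=c'(t_n)$, i.e.\ $c(w^{t_1})=\dots=c(w^{t_n})$, which is exactly $c(a_1)=\dots=c(a_n)$. Thus $(a_1,\dots,a_n)$ is a monochromatic solution of $P=0$ in $S$, proving the claim. Equivalently, one may phrase this through ultrafilters: take $\U\in\beta\N$ witnessing the partition regularity of the linear system attached to $\mathcal{J}$ via Theorem \ref{theorem:PR_URF_system}, and push it forward to $\overline{\exp}_w(\U)\in\beta S$, which then witnesses the partition regularity of $P=0$ over $S$.

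The computation is short, so the only delicate points are bookkeeping rather than substance. The main thing to get right is that completeness (maximal order) is exactly what guarantees that each difference $M_i-M_l$ equals the prescribed $d_i$, so that $P(w^{t_1},\dots,w^{t_n})$ collapses to the single evaluation $w^{M_l}Q_{P,\mathcal{J}}(w)$ rather than to an expression whose exponents still vary with $\phi$; for a non-complete functional the free tail of fibers would spoil this factorization. A secondary, purely technical point is that the coefficients $t_j$ are positive, so only the values of $c'$ on $\N$ matter and the possibly undefined color $c'(0)$ (when $1\notin S$) is irrelevant.
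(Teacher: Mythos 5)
Your proof is correct and follows essentially the same route as the paper: the key step in both is to take a monochromatic solution $\vec{t}$ of the linear constraints attached to the complete functional (so that, by completeness, the values of $\phi$ on the fibers are exactly $M_l+d_i$) and substitute $a_j=w^{t_j}$, collapsing $P(a_1,\dots,a_n)$ to $w^{M_l}Q_{P,\mathcal{J}}(w)=0$. The only difference is presentational: the paper packages the monochromaticity via an ultrafilter witness pushed forward by $\overline{\exp}_w$ (which you yourself note as an equivalent phrasing), while you argue directly with an arbitrary coloring pulled back through $x\mapsto w^x$.
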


\begin{proof}
By Theorems \ref{theorem:PR_URF_system} and \ref{theorem:PR_iff_ultrafilters}, there exists an $\U\in\bN$ that witnesses the infinite partition regularity of the system $A_\mathcal{J}\vec{t}=0$, namely such that for all $U\in\U$, $i\in[0,l]$, $\alpha\in J_i$ and $\beta\in J_{l-1}$ there are $\vec{u}=(u_1,\dots,u_n)\in U^n$ such that $(\alpha-\beta)\cdot \vec{u}=d_i$. Given any root $s\in S$ of $Q_{P,\mathcal{J}}$, we claim that $\V=\overline{\exp}_s(\U)$ is a witness of the partition regularity of $P(x_1,\dots,x_n)=0$. Indeed, if $V\in\V$, we have that $U=\{x\in\N:s^x\in V\}\in\U$ and thus, as observed above, one can find $\vec{u}=(u_1,\dots,u_n)\in U^n$ such that $(\alpha-\beta)\cdot\vec{u}=d_i$. For each $i\in[0,l]$, define $s_i=s^{u_i}$; then, $s_i\in V$ and 
\begin{align*}
    P(s_1,\dots,s_n) & = \sum_{i=0}^{l}\sum_{\alpha\in J_i}c_\alpha s_1^{\alpha(1)}\cdot s_n^{\alpha(n)} = \sum_{i=0}^{l}\sum_{\alpha\in J_i} c_\alpha s^{\alpha\cdot \vec{u}} = \sum_{i=0}^{l}\sum_{\alpha\in J_i} c_\alpha s^{\beta\cdot \vec{u}+d_i} = \\
    & = s^{\beta\cdot\vec{u}}\sum_{i=0}^{l}\left(\sum_{\alpha\in J_i} c_\alpha\right) s^{d_i} = s^{\beta\cdot\vec{u}}\sum_{i=0}^{l}\overline{c}_i s^{d_i} = s^{\beta\cdot\vec{u}}Q_{P,\mathcal{J}}(s) = 0,
\end{align*}
as desired.
\end{proof}

\begin{exam}
Let $\lambda\in\N$, $a,b\in\C$ such that $a\in\N$  and $P(x,y,z)=abxy^2-(a+b)x^2yz\lambda+x^3z^{2\lambda}$. Let $J_2 = \{(1,2,0)\}$, $J_1=\{(2,1,\lambda)\}$, $J_0=\{(3,0,2\lambda)\}$, $d_1=\lambda$ and $d_2=4\lambda$. Then $\mathcal{J}=(J_0,J_1,J_2,d_0,d_1)$ is an upper Rado functional for $P$ of order $2$, since the matrix 
\begin{equation*}
    O = 
    \begin{pmatrix}
    
        \phantom{-}1 & -1 & \phantom{-}\lambda \\
        \phantom{-}2 & -2 & 2\lambda \\
    \end{pmatrix}
\end{equation*}
is infinitely partition regular and the system $O\vec{t}=(2\lambda,4\lambda)$ has a constant solution $s=2$. We have that 
\begin{equation*}
    Q_{\mathcal{J},P}(w) = a - (a+b)w^{2\lambda} + w^{4\lambda}.
\end{equation*}
has $a$ as a natural root. Hence, we have that $P$ is partition regular over $\N$.  
\end{exam}

The next result is an attempt to recover the partition regular of a polynomial equation over an integral domain $D$ from the existence of a root for $Q_{P,\mathcal{J}}$ in the field of fractions of $D$. Ideally, these kind of results could be combined with tests for existences of roots, such as the \emph{rational root test} for unique factorization domains \cite[Proposition 5.5]{Aluffi2021}, to provide the partition regularity of polynomial equations over these domains.

\begin{lem}\label{corollary:rational-root-maximal-rado}
Let $D$ be an integral domain and $K$ be the field of fractions of $D$. Suppose that $\mathcal{J}=(J_0,\dots,J_l,d_0,\dots,d_{l-1})$ is a complete Rado functional for $P\in D[x_1,\dots,x_n]$. In virtue of the Corollaries \ref{corollary:PR_URF_system_1} and \ref{corollary:PR_URF_system_3}, for each $i\in[0,l-1]$ let $L_i=|\alpha|$ for any $\alpha\in J_i$ and let $s\in \N$ be such that  
\begin{equation*}
    L_i = L_0 + \frac{d_i}{s}.
\end{equation*}
Assume that $S\subseteq D$ is closed under multiplication and $a,b\in S$ are such that $b\neq 0$ and $a/b$ is a root of $Q_{P,\mathcal{J}}$. Define
\begin{equation*}
    \tilde{P}(x_1,\dots,x_n) = P\left(\frac{x_1}{b^s},\dots,\frac{x_n}{b^s}\right)
\end{equation*}
over $K$. Then, $\tilde{P}$ is partition regular over $S$. 
\end{lem}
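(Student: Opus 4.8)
The plan is to reduce the statement to Theorem \ref{theorem:pr_polynomials_crf}, applied to the polynomial $\tilde{P}$ viewed over the field $K$ (so that the ambient ring in that theorem is $K$ and $S\subseteq K$). The whole point of the substitution $x_j\mapsto x_j/b^s$ is to clear the denominator of the root $a/b$ and turn it into a genuine element $a\in S$, which is the only sort of root that Theorem \ref{theorem:pr_polynomials_crf} can exploit.

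First I would check that $\mathcal{J}$ remains a complete Rado functional for $\tilde{P}$. Writing $P(\boldsymbol{x})=\sum_\alpha c_\alpha\boldsymbol{x}^\alpha$, one gets $\tilde{P}(\boldsymbol{x})=\sum_\alpha c_\alpha b^{-s|\alpha|}\boldsymbol{x}^\alpha$, so the coefficient of $\boldsymbol{x}^\alpha$ in $\tilde{P}$ is $\tilde{c}_\alpha=c_\alpha b^{-s|\alpha|}$. Since $b\neq 0$ and $D$ is an integral domain (so $K$ is a field and $b^{-s|\alpha|}\neq 0$), we have $\tilde{c}_\alpha\neq 0$ if and only if $c_\alpha\neq 0$, whence $\supp(\tilde{P})=\supp(P)$. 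Because Rado partitions and Rado functionals depend only on the support of the polynomial and not on its coefficients, $\mathcal{J}$ is a complete Rado functional for $\tilde{P}$ as well.

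Next I would compute $Q_{\tilde{P},\mathcal{J}}$ and relate it to $Q_{P,\mathcal{J}}$. By Corollary \ref{corollary:PR_URF_system_2} each $J_i$ is homogeneous, so $|\alpha|=L_i$ for every $\alpha\in J_i$, giving $\overline{\tilde{c}}_i=\sum_{\alpha\in J_i}c_\alpha b^{-s|\alpha|}=b^{-sL_i}\overline{c}_i$. Using the degree relation $L_i=L_0+d_i/s$, that is $sL_i=sL_0+d_i$, this becomes $\overline{\tilde{c}}_i=b^{-sL_0}b^{-d_i}\overline{c}_i$, and therefore
\[
Q_{\tilde{P},\mathcal{J}}(w)=\sum_{i=0}^{l}\overline{\tilde{c}}_i w^{d_i}=b^{-sL_0}\sum_{i=0}^{l}\overline{c}_i\left(\tfrac{w}{b}\right)^{d_i}=b^{-sL_0}\,Q_{P,\mathcal{J}}\!\left(\tfrac{w}{b}\right).
\]
As $b^{-sL_0}\neq 0$, the roots of $Q_{\tilde{P},\mathcal{J}}$ are exactly $b$ times those of $Q_{P,\mathcal{J}}$; in particular, since $a/b$ is a root of $Q_{P,\mathcal{J}}$, the element $a=b\cdot(a/b)\in S$ is a root of $Q_{\tilde{P},\mathcal{J}}$.

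Finally I would invoke Theorem \ref{theorem:pr_polynomials_crf} for $\tilde{P}\in K[x_1,\dots,x_n]$ with the set $S\subseteq K$, which has the root $a$ of $Q_{\tilde{P},\mathcal{J}}$. The only hypothesis-matching to be careful about is that the theorem requires $S$ infinite and closed under \emph{exponentiation}, while here $S$ is only assumed closed under multiplication; but the exponents produced in that theorem are components of vectors in $\N^n$, hence positive, so each power $a^{u_i}$ is a finite product of copies of $a\in S$ and again lies in $S$. Thus $\tilde{P}$ is partition regular over $S$. The genuinely delicate points are purely bookkeeping: verifying that homogeneity of the $J_i$ together with $L_i=L_0+d_i/s$ collapses all the factors $b^{-s|\alpha|}$ into the single overall constant $b^{-sL_0}$ on each Rado set, so that the substitution really rescales the variable of $Q$ by $b$, and checking that multiplicative closure (plus infinitude) of $S$ delivers exactly the hypotheses demanded by Theorem \ref{theorem:pr_polynomials_crf}.
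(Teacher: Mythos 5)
Your proposal is correct and follows essentially the same route as the paper: reduce to Theorem \ref{theorem:pr_polynomials_crf} by showing that the identity $b^{sL_0}Q_{\tilde{P},\mathcal{J}}(a)=Q_{P,\mathcal{J}}\left(\frac{a}{b}\right)=0$ (via $\overline{\tilde{c}}_i=b^{-sL_i}\overline{c}_i$ and $d_i=s(L_i-L_0)$) makes $a\in S$ a root of $Q_{\tilde{P},\mathcal{J}}$. Your extra bookkeeping --- checking $\supp(\tilde{P})=\supp(P)$ so that $\mathcal{J}$ transfers to $\tilde{P}$, and observing that multiplicative closure of $S$ suffices since only positive integer powers of the root are needed in that theorem --- is a welcome tightening of details the paper leaves implicit, but it does not change the argument.
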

\begin{proof}
By Theorem \ref{theorem:pr_polynomials_crf} it suffices to produce a root for $Q_{\tilde{P},\mathcal{J}}$ in $S$. To this end, let us note that, if $c_\alpha'$ is the coefficient of $\tilde{P}$ associated to a exponent $\alpha\in J_i$, then $c_\alpha'=b^{-sL_i}$. Hence,
\begin{equation*}
    Q_{\tilde{P},\mathcal{J}}(a) = \sum_{i=0}^{l}\bar{c}_i'a^{d_i} = \sum_{i=0}^{l} \bar{c}_i b^{-s L_i}a^d_i. 
\end{equation*}
For each $i\in\{1,\dots,l\}$ we have that $d_i=s(L_i-L_0)$, and thus
\begin{equation*}
    b^{s L_0}Q_{\tilde{P},\mathcal{F}}(a) = \sum_{i=0}^{l} \bar{c}_i b^{-s (L_i - L_0)}a^d_i = \sum_{i=0}^{l} \bar{c}_i\left(\frac{a}{b}\right)^{d_i} = Q_{P,\mathcal{J}}\left(\frac{a}{b}\right)=0,
\end{equation*}
which proves that $a$ is a root for $Q_{\tilde{P},\mathcal{J}}$ in $S$, which concludes the proof.
\end{proof}

\begin{definition}
Let $S$ be a semigroup. An ultrafilter $\U$ over $S$ is said to be a divisible ultrafilter if for all $t\in S$, $tS\in\U$. 
\end{definition}

\begin{thm}\label{theorem:rational-root}
With the same hypotheses and notations of Lemma \ref{corollary:rational-root-maximal-rado}, if $\U$ is a divisible ultrafilter over $S$ such that $\U\models\tilde{P}(x_1,\dots,x_n)=0$, then $P$ is partition regular over $S$.
\end{thm}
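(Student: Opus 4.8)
The plan is to transfer a monochromatic solution of $\tilde{P}=0$ back to a solution of $P=0$ by ``undoing'' the substitution $x_i\mapsto x_i/b^s$, where the divisibility of $\U$ is precisely what guarantees that the relevant sets contain enough elements divisible by $b^s$. Recall that, in light of Theorem \ref{theorem:PR_iff_ultrafilters}, the hypothesis $\U\models\tilde{P}(x_1,\dots,x_n)=0$ means that $\U$ witnesses the partition regularity of $\tilde{P}=0$ over $S$; that is, every $U\in\U$ contains a tuple $(x_1,\dots,x_n)\in U^n$ with $\tilde{P}(x_1,\dots,x_n)=0$.

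First I would observe that, since $b\in S$ and $S$ is closed under multiplication, $b^s\in S$, so divisibility of $\U$ gives $b^s S\in\U$. Because $D$ is an integral domain and $b\neq 0$, multiplication by $b^s$ is injective, so $y\mapsto b^s y$ is a bijection from $S$ onto $b^s S$. Using this I would define
\[
    \V=\{A\subseteq S: b^s A\in\U\},
\]
which is an ultrafilter on $S$: the condition is meaningful since $b^s A\subseteq b^s S\subseteq S$; closure under supersets is clear; closure under intersection follows from $b^s(A\cap B)=b^s A\cap b^s B$ (injectivity); and for the ultra property one uses $b^s(S\setminus A)=b^s S\setminus b^s A$ together with $b^s S\in\U$. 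Thus both hypotheses of the theorem enter exactly here, divisibility through $b^s S\in\U$ and the integral-domain assumption through injectivity of multiplication by $b^s$.

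Next I would show that $\V$ witnesses the partition regularity of $P=0$ over $S$. Given $V\in\V$, by definition $b^s V\in\U$, so by $\U\models\tilde{P}=0$ there is a tuple $(x_1,\dots,x_n)\in(b^s V)^n$ with $\tilde{P}(x_1,\dots,x_n)=0$. Writing each $x_i=b^s v_i$ with $v_i\in V$ (uniquely, as $D$ is a domain), the definition of $\tilde{P}$ gives
\[
    0=\tilde{P}(b^s v_1,\dots,b^s v_n)=P\!\left(\tfrac{b^s v_1}{b^s},\dots,\tfrac{b^s v_n}{b^s}\right)=P(v_1,\dots,v_n),
\]
so $(v_1,\dots,v_n)\in V^n$ is a solution of $P=0$. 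Hence every $V\in\V$ contains a solution of $P=0$, and by Theorem \ref{theorem:PR_iff_ultrafilters} $P$ is partition regular over $S$.

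I expect the only delicate point to be the purely set-theoretic bookkeeping that makes $\V$ a legitimate ultrafilter on $S$ and keeps $b^s V$ inside $\U$; as noted, this rests on $b^s S\in\U$ and the injectivity of multiplication by $b^s$, and no Rado-functional or root-finding machinery is needed beyond what has already been encoded in $\tilde{P}$ and the given witness $\U$.
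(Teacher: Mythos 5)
Your proof is correct, and it is worth noting that it is actually \emph{more} careful than the paper's own argument. The core computation is the same in both: $\tilde{P}(b^s a_1,\dots,b^s a_n)=P(a_1,\dots,a_n)$, with divisibility of $\U$ supplying $b^sS\in\U$. But the paper claims that $\U$ itself witnesses $P=0$, justifying this with the identity $b^sA=b^sS\cap A\in\U$ for every $A\in\U$ --- and that identity is false as stated ($b^sA$ need not be a subset of $A$; e.g.\ in $\N$ with $b^s=2$ and $A=\{1,2\}$ one has $2A=\{2,4\}$ while $2\N\cap A=\{2\}$). Worse, the corrected conclusion $b^sA\in\U$ does not follow from divisibility alone: a divisible ultrafilter can contain the set $A=\{n:\nu_p(n)\text{ even}\}$, for which $b^sA\cap A$ may be empty, so $b^sA\notin\U$. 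What divisibility genuinely yields is $A\cap b^sS\in\U$, whose solutions of $\tilde{P}=0$, after division by $b^s$, land in $S$ but not necessarily back in $A$ --- so the paper's claim ``$\U\models P(x_1,\dots,x_n)=0$'' is not established by its proof. Your construction of the pullback ultrafilter $\V=\{A\subseteq S: b^sA\in\U\}$ circumvents exactly this gap: injectivity of multiplication by $b^s$ (here the integral-domain hypothesis and $b\neq0$ are genuinely used) makes $\V$ a legitimate ultrafilter, every $V\in\V$ then contains a solution of $P=0$ by the displayed computation, and Theorem \ref{theorem:PR_iff_ultrafilters} gives partition regularity of $P$ over $S$, which is all the theorem asserts. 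The only trade-off is that your argument produces a \emph{different} witnessing ultrafilter $\V$ rather than the stronger (and, as the counterexample shows, unwarranted) claim that $\U$ itself witnesses $P=0$; since the theorem only claims partition regularity, your route proves the stated result and should be regarded as the correct proof of it.
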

\begin{proof}
As $P$ is divisible, for each $A\in\U$ we have that $b^s A = b^s S\cap A\in\U$; as $\U\models\tilde{P}(x_1,\dots,x_n)=0$, there are $a_1,\dots,a_n\in A$ such that 

\begin{equation*}
    0 = \tilde{P}(b^s a_1,\dots,b^s a_n) = P\left(\frac{b^s a_1}{b^s},\dots,\frac{b^s a_n}{b^s}\right),
\end{equation*}
which proves that $\U\models P(x_1,\dots,x_n)=0$.
\end{proof}

\section{Partition regularity of $P(x,y,z)$}\label{section:examples_3_var}
The characterization of the infinite partition regularity of equations in two variables is quite simple (this problem and related generalizations will be discussed in detail in the forthcoming paper \cite{LuperiBagliniArruda2022}): given an algebraically closed field of characteristic $0$, $P\in F[x,y]$ and an infinite $S\subseteq F$, then $P(x,y)$ is infinitely partition regular over $S$ if and only if $x-y$ divides $P$. As a trivial consequence, this applies when $S=\N$. The situation is knowingly much more complicated when we have at least three variables: no general characterization of the partition regularity is known. Our goal in this section is to use the methods developed so far to study the partition regularity of polynomial equations in three variables. 

By Corollary \ref{corollary:PR_URF_system_3}, the only possible upper Rado functionals for an homogeneous $P\in\Z[x_1,\dots,x_n]$ are those of order $m=0$; hence, the maximal Rado condition implies there is a nonempty subset of the coefficients of $P$ that sums zero. For the homogeneous and some other specific cases, \cite[theorem 3.8]{LuperiBagliniDiNasso2018} proves that this set $I$ must be maximal with respect to the degree of the polynomials. This condition is sufficient for the partition regularity in some cases (e.g., $x+y-z$), it is not in others (e.g., $x^{3}+y^{3}-z^{3}$) and, sometimes, even unknown (e.g. $x^{2}+y^{2}-z^{2}$). As such, we believe that the homogeneous case needs different ideas to be treated. In this section we expand this necessary condition for any partition regular inhomogeneous polynomial in three variables and prove a specific restrictions for the degrees of the polynomial $Q$.

\begin{thm}\label{theorem:necessary_pr_inhomogeneous}
If $P\in\Z[x,y,z]$ is inhomogeneous partition regular polynomial with no constant term and suppose that $P$ only admits upper Rado functionals of order $m=0$. Then, fixed an upper Rado functional satisfying the maximal Rado condition, there are polynomials $H,Q_1,\dots,Q_k\in\Z[x,y,z]$ such that 
\begin{enumerate}
    \item $P(x,y,z)=H(x,y,z)+\sum_{i=1}^{k} Q_i$;
    \item $H$ is homogeneous and there is a $I\subseteq\supp(H)$ such that $\sum_{\alpha\in I}c_\alpha=0$; 
    \item $\deg Q_k \leq \dots\leq Q_1 \leq \deg H$; and
    \item there are rational numbers $\rho_1,\dots,\rho_k$ and positive rational numbers $q_1,\dots,q_k$ given by Theorems \ref{theorem:PR_URF_system} and \ref{theorem:infinite-non-homegenous-inequalities}  such that 
    \begin{equation*}
        \deg H_0 = \deg Q_i + \rho_i q_i.
    \end{equation*}
    
\end{enumerate}
\end{thm}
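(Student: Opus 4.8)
The plan is to extract the decomposition directly from a fixed order-$0$ upper Rado functional witnessing the maximal Rado condition, and then to convert the defining system of that functional, through the characterizations of Section~\ref{section:Rado_functionals}, into the degree identities (3) and (4). First I would fix the data: since $P$ is partition regular it satisfies the maximal Rado condition by \cite[Theorem 3.1]{BarretMoreiraLupiniMoreira2021}, and by hypothesis each of its upper Rado functionals has order $m=0$; fix one such $\mathcal J=(J_0,\dots,J_l)$ realizing the condition. With $d_0=0$ the relevant monovariate polynomial is $Q_{\mathcal J,q}(w)=\sum_{\alpha\in J_0}c_\alpha w^{|\alpha|}$, and the condition says it has a real root in $[1,q]$ for every $q\ge 2$. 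I would then define the decomposition from the partition alone: put $H:=\sum_{\alpha\in J_0}c_\alpha\boldsymbol{x}^\alpha$ and $Q_i:=\sum_{\alpha\in J_i}c_\alpha\boldsymbol{x}^\alpha$ for $i\in[l]$, with $k=l$. Item (1) is immediate, as $(J_0,\dots,J_l)$ partitions $\supp(P)$ with no cancellation.

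The heart of item (2) is to show that the block $J_0$ is homogeneous. Granting this, $Q_{\mathcal J,q}$ reduces to $\bar c_0\,w^{|\alpha|}$ for $\alpha\in J_0$, and having a root in $[1,q]$ forces $\bar c_0=\sum_{\alpha\in J_0}c_\alpha=0$; then $H$ is homogeneous and $I:=J_0\subseteq\supp(H)$ witnesses item (2). To prove homogeneity I would argue by contradiction from the order-$0$ hypothesis: a degree spread inside a block means that $M(J_0)$ (and hence $\hat A_{\mathcal J}$, which satisfies the columns condition by Corollary~\ref{corollary:PR_URF_system_1}) contains a row $\alpha-\beta$ with nonzero coordinate sum $|\alpha|-|\beta|\ne 0$, i.e.\ a partition-regular linear relation equating monomials of distinct total degree; I expect to show that such a relation can be used to separate the block along degrees and refine $\mathcal J$ into a functional of strictly positive order, contradicting the assumption that $P$ admits only order-$0$ functionals. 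This rigidity step is where I expect the main difficulty, since it is precisely the point at which the standing hypothesis must be turned into structural information, and the same argument has to be run for every $J_i$ so that each $Q_i$ is homogeneous and $\deg Q_i$ is well defined.

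For items (3) and (4) I would feed $\mathcal J$ into Theorem~\ref{theorem:PR_URF_system}: the system $\hat A_{\mathcal J}\vec t=0$ together with $(\alpha_{0,1}-\alpha_{i,1})\cdot\vec t\gg 0$ for $i\in[l]$ is infinitely partition regular. Applying Theorem~\ref{theorem:infinite-non-homegenous-inequalities} with null right-hand side produces positive rationals $q_1,\dots,q_l\in\Q_{>0}$ and auxiliary variables $z_i$ satisfying $(\alpha_{0,1}-\alpha_{i,1})\cdot\vec t=q_iz_i$ along the monochromatic solutions. Once the blocks are homogeneous the all-ones vector lies in $\ker\hat A_{\mathcal J}$, and evaluating there turns $(\alpha_{0,1}-\alpha_{i,1})\cdot\vec t$ into $|\alpha_{0,1}|-|\alpha_{i,1}|=\deg H-\deg Q_i$; writing this value as $\rho_i q_i$ for the corresponding Hindman--Leader rational $q_i$ defines $\rho_i\in\Q$ and yields item (4), namely $\deg H_0=\deg Q_i+\rho_i q_i$ with $H_0=H$. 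Finally, each gap $\rho_i q_i=\deg H-\deg Q_i$ is positive because the $q_i$ are positive and the inequalities are strict, so $\deg Q_i<\deg H$; re-indexing the blocks in order of decreasing degree then gives $\deg Q_k\le\dots\le\deg Q_1\le\deg H$, which is item (3). The only bookkeeping left is to fix the sign of the solution used so the gaps are oriented as stated and to note that $P=H+\sum_{i=1}^k Q_i$ has no cancellation, so the coefficients in item (2) are genuinely those of $P$.
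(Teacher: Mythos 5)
Your proposal diverges from the paper at exactly the point where you write ``I expect to show'', and that step is the entire content of the theorem. You rest everything on the claim that in an order-$0$ upper Rado functional each block (at least $J_0$) must be homogeneous, to be proved by refining an inhomogeneous block into a functional of positive order. That refinement does not work as described: to promote a split of $J_0$ to an order $m\geq 1$ functional you would need the gaps $M_0-M_i$ to equal a \emph{fixed} constant $d_i$ along infinitely many monochromatic maps, which by Corollary \ref{corollary:PR_URF_system_1} and Lemma \ref{lemma:infinitely_pr_inhomogeneous_linear} requires a constant solution $s\in\Z$ of the resulting inhomogeneous system --- and a degree-inhomogeneous row of $M(J_0)$ can perfectly well satisfy the columns condition without yielding any of this (e.g.\ the row $\alpha-\beta=(1,-1,5)$, i.e.\ the partition regular relation $t_1-t_2+5t_3=0$, pairs exponents of total degrees $6$ and $1$). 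Whether the ``only order-$0$'' hypothesis excludes such configurations is precisely what the paper has to work for: it converts the functional, via Theorems \ref{theorem:PR_URF_system} and \ref{theorem:infinite-non-homegenous-inequalities}, into a single homogeneous system with Hindman--Leader auxiliary columns $-q_i$, applies Rado's columns condition to the resulting matrix $O$, and runs a case analysis on $I_0\cap\{1,2,3\}$ (Lemma \ref{lemma:urf_3_var_case_1} and the two lemmas following it). There, homogeneity of the relevant pieces and the relations $\deg H_0=\deg Q_i+\rho_i q_i$ are \emph{read off} from the columns-condition partition, the $\rho_i$ being the span coefficients of that partition; note also that Case 2 of that analysis a priori yields blocks homogeneous only in two of the three variables, so your stronger claim that every $J_i$ is homogeneous is not even what the paper establishes, and the theorem itself asserts homogeneity only for $H$.

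Your derivation of items (3) and (4) is also flawed in two ways. First, partition regularity of $(\alpha_{0,1}-\alpha_{i,1})\cdot\vec{t}\gg 0$ does \emph{not} imply that the coefficient sum $|\alpha_{0,1}|-|\alpha_{i,1}|$ is positive: the value at the all-ones vector is unrelated to the values at monochromatic solutions (the relation $t_1-2t_2\gg 0$ is partition regular although its coefficient sum is $-1$; the bounded-solution dichotomy in Theorem \ref{theorem:infinite-non-homegenous-inequalities} is exactly about when a sign condition on $\sum_j b_{ij}$ can be extracted). So your claimed strict inequality $\deg Q_i<\deg H$ does not follow, and indeed the paper's Case-3 lemma explicitly allows $\chi_i=0$, i.e.\ $\deg R_i=\deg R_0$ --- compare the order-$0$ functional $J_0=\{(3,0,0),(0,0,3)\}$, $J_1=\{(0,3,0)\}$ for $x^3+y^3-z^3$ in Section \ref{section:rado_conditions}, whose separated blocks have equal degree; this is why item (3) is stated with non-strict inequalities. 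Second, your move of taking the Hindman--Leader rationals $q_i$ and then \emph{defining} $\rho_i:=(\deg H-\deg Q_i)/q_i$ makes item (4) vacuous; in the paper the $\rho_i$ come first, as coefficients in the span relations of the columns-condition partition of $O$, and the degree identity is a consequence, which is what gives the clause its content.
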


In order to prove Theorem \ref{theorem:necessary_pr_inhomogeneous}, we consider any arbitrary upper Rado functional $\mathcal{J}=(J_0,\dots,J_l,d_0,\dots,d_{m-1})$ that satisfies the maximal Rado condition and invoke Theorems \ref{theorem:PR_URF_system} and \ref{theorem:infinite-non-homegenous-inequalities} to prove that the existence of $q_1,\dots,q_{l-m}\in\Q_{>0}$ such that the system 
\begin{equation*}\label{equation:upper_rado_functional_pr_system_2}
    \left\{
    \begin{matrix}
        A_\mathcal{J}\vec{t} &=& b\\
        (\alpha_{m,1}-\alpha_{m+1,1})\cdot\vec{t} -q_{1}z_1& =& 0\\
        \vdots & \vdots & \vdots\\
        (\alpha_{m,1}-\alpha_{l,1})\cdot\vec{t} - q_{l-m} z_{l-m}& =& 0\\
    \end{matrix}
    \right.
\end{equation*}
is infinitely partition regular, where $A_\mathcal{J}$ was defined in the Equation \ref{equation:def_matrix_urf} and $b$ was defined by the Equation \ref{equation:def_vec_urf}. In particular, this implies that the matrix   
\begin{equation*}
    O = 
    \begin{pmatrix}
        \alpha_{0,2}(1)-\alpha_{0,1}(1) & \alpha_{0,2}(2)-\alpha_{0,1}(2) & \alpha_{0,2}(3)-\alpha_{0,1}(3) & 0 & 0 & \dots & 0\\
        
        \vdots & \vdots & \vdots & \vdots &\vdots & \vdots & \vdots\\
        
        \alpha_{l,2}(1)-\alpha_{l,1}(1) & \alpha_{l,2}(2)-\alpha_{l,1}(2) & \alpha_{l,2}(3)-\alpha_{l,1}(3) & 0 & 0 & \dots & 0\\
        
        \alpha_{0,1}(1)-\alpha_{m,1}(1) & \alpha_{0,1}(2)-\alpha_{m,1}(2) & \alpha_{0,1}(3)-\alpha_{m,1}(3)& 0 & 0 & \dots & 0\\
        
        \vdots & \vdots & \vdots & \vdots & \vdots & \vdots &\vdots\\
        \alpha_{m-1,1}(1)-\alpha_{m,1}(1) & \alpha_{m-1,1}(2)-\alpha_{m,1}(2) & \alpha_{m-1,1}(3)-\alpha_{m,1}(3) & 0 & 0 & \dots & 0\\
        \alpha_{m,1}(1)-\alpha_{m+1,1}(1) & \alpha_{m,1}(2)-\alpha_{m+1,1}(2) & \alpha_{m,1}(3)-\alpha_{m+1,1}(3) & -q_{1}& 0 & \dots & 0\\
        \alpha_{m,1}(1)-\alpha_{m+2,1}(1) & \alpha_{m,1}(2)-\alpha_{m+2,1}(2) & \alpha_{m,1}(3)-\alpha_{m+2,1}(3) & 0 & -q_{2} & \dots & 0\\  
        \vdots & \vdots & \vdots & \vdots & \vdots & \vdots & \vdots\\
        \alpha_{m,1}(1)-\alpha_{l,1}(1) & \alpha_{m,1}(2)-\alpha_{l,1}(2) & \alpha_{m,1}(3)-\alpha_{l,1}(3) & 0 & 0 & \dots & -q_{l-m}\\     
    \end{pmatrix}
\end{equation*}
must satisfy the columns condition and there is a constant solution $s\in\Z$ for $A_\mathcal{J}\vec{t}=b$. Enumerating the columns of $O$ as $C_1,\dots,C_{l-m+3}$, Rado's Theorem witnesses the existence of a partition $I_0,\dots,I_r$ of $[l-m+3]$ such that 
\begin{enumerate}
    \item $\sum_{i\in I_0}C_i = \vec{0}$; and 
    \item for each $u\in\{1,\dots,k\}$, $\sum_{i\in I_u} C_i \in\vecspan_{\Q}\{C_j:j\in I_0\cup\dots\cup I_{u-1}\}$.
\end{enumerate}
By the configuration of the matrix $O$, one of the following possibilities must happen:
\begin{description}
    \item[Case 1.] $I_0\cap\{1,2,3\}=\{1\}$;
    \item[Case 2.] $I_0\cap\{1,2,3\}=\{1,2\}$; or
    \item[Case 3.]  $I_0\cap\{1,2,3\}=\{1,2,3\}$.
\end{description}
Hence, we divide the proof of Theorem \ref{theorem:necessary_pr_inhomogeneous} in the three Lemmas bellow, each of which treats one of the above cases. The following Lemmas also show that, in each particular case, the polynomial must have a fixed structure.

\begin{lem}\label{lemma:urf_3_var_case_1}
If case 1 happens, then $m = 0$ and there are an $a\in \N$, $q_1,\dots,q_{l}\in\Q_{>0}$, $R_0\in\Z[y,z]$, a partition $K_0,K_1$ of $[l]$, an homogeneous $R_0\in \Z[y,z]$ and for each $i\in K_1$ an homogeneous $R_i\in\Z[y,z]$ and $\rho_i,\rho_i'q_i\in\Q_{\geq 0}$ such that 
\begin{enumerate}
\item $m=0$;
\item in each Rado set $J_{i}$, the exponent of $x$ is constant $P$ is homogeneous in $(y,z)$;
\item the polynomial has the structure  
\begin{equation*}
    P(x,y,z)= x^{a}\left(R_{0}(y,z)+\sum_{i\in K_0}x^{\rho_i q_i} R_i(y,z)\right)
\end{equation*}
and  $\deg R_i = \deg R_0 + \rho_i' q_i$; 
\item the maximal Rado condition reduces to $\sum_{\alpha\in J_{0}}c_{\alpha}=0$, where $J_{0}\subseteq\supp \left(x^{a}R_{0}\right)$.
\end{enumerate}
\end{lem}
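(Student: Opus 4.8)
The plan is to read the structure of $P$ directly off the columns condition satisfied by $O$ under the Case 1 hypothesis, and then to upgrade the resulting information on the $x$-exponents to full $(y,z)$-homogeneity of each Rado set by means of Lemma \ref{lemma:M_J_1}. Conclusion (1) is immediate: by the standing hypothesis $P$ admits only order-$0$ upper Rado functionals, so $m=0$, the block $\hat N_\mathcal{J}$ is absent, and $O$ consists of the blocks $M(J_0),\dots,M(J_l)$ (with zeros in the $l$ auxiliary columns) stacked on top of the $l$ rows $\alpha_{0,1}-\alpha_{j,1}$, each carrying the entry $-q_j$ in the $(3+j)$-th column.

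First I would exploit the position $I_0\cap\{1,2,3\}=\{1\}$. Since $\sum_{i\in I_0}C_i=\vec 0$ and, in every row belonging to a block $M(J_i)$, the only columns with possibly nonzero entries are the first three while the auxiliary columns vanish, restricting the identity $\sum_{i\in I_0}C_i=\vec 0$ to these rows leaves only the contribution of the $x$-column $C_1$: columns $2,3$ are excluded from $I_0$ by Case 1, and no auxiliary column meets an $M(J_i)$-row. Hence every entry $\alpha_{i,k}(1)-\alpha_{i,1}(1)$ of $C_1$ in an $M(J_i)$-row is forced to vanish, i.e. the exponent of $x$ is constant on each Rado set $J_i$, say equal to $a_i$. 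This is the first half of conclusion (2).

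Next I would promote this to $(y,z)$-homogeneity. Each $J_i$ is a Rado set, so by Lemma \ref{lemma:M_J_1} the matrix $M(J_i)$ satisfies the columns condition; but once the $x$-exponent is constant its first column is identically zero, so $M(J_i)$ is effectively a two-column matrix in the $(y,z)$-differences. The key elementary observation is that a two-column integer matrix can satisfy the columns condition only if either both columns lie in $I_0$ — forcing their sum, and hence every row sum $\big(\alpha(2)-\alpha_{i,1}(2)\big)+\big(\alpha(3)-\alpha_{i,1}(3)\big)$, to be zero — or one column is already identically zero, which (together with the span requirement) collapses $J_i$ to a single monomial. In either case $\alpha(2)+\alpha(3)$ is constant on $J_i$, so the restriction of $P$ to $J_i$ is $x^{a_i}R_i(y,z)$ with $R_i$ homogeneous in $(y,z)$; this yields the second half of (2) and exhibits $P=\sum_{i=0}^l x^{a_i}R_i(y,z)$.

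Finally I would assemble conclusion (3) by factoring out the least power $x^{a}$ of $x$, $a=\min_i a_i$, grouping the Rado sets of $x$-exponent $a$ into $R_0$ and the remaining ones into the terms $x^{a_i-a}R_i$, and setting $\rho_i q_i:=a_i-a\ge 0$; the degree relation $\deg R_i=\deg R_0+\rho_i' q_i$ between the $(y,z)$-homogeneous pieces is then read off by splitting the defining auxiliary equation $(\alpha_{0,1}-\alpha_{i,1})\cdot\vec t=q_i z_i$ — whose existence is guaranteed by Theorems \ref{theorem:PR_URF_system} and \ref{theorem:infinite-non-homegenous-inequalities} — into its $x$-part and its $(y,z)$-part. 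Conclusion (4) is just the specialisation recorded after the definition of the maximal Rado condition: for $m=0$ it reads $\overline{c}_0=\sum_{\alpha\in J_0}c_\alpha=0$, and $J_0\subseteq\supp(x^aR_0)$ by construction. The one genuinely delicate point is the degree bookkeeping in (3): because for $m=0$ the augmented homogeneous system need not possess a nonzero constant solution (Schur's equation already illustrates this), the rationals $\rho_i,\rho_i',q_i$ must be extracted from the span relations of the columns-condition partition $I_0,\dots,I_r$ rather than from a single constant solution, and care is needed to keep the $x$- and $(y,z)$-degree contributions separate.
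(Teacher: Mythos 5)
Your opening moves are sound and in one respect cleaner than the paper's. Restricting $\sum_{i\in I_0}C_i=\vec{0}$ to the $M(J_i)$-block rows (where the auxiliary columns vanish and, by Case 1, columns $2,3$ do not occur) correctly forces the $x$-exponent to be constant on each Rado set; and your two-column dichotomy for $M(J_i)$ via Lemma \ref{lemma:M_J_1} --- either both $(y,z)$-columns lie in $I_0$, whence every row sum vanishes and $J_i$ is $(y,z)$-homogeneous, or the remaining columns are forced to be zero and $J_i$ is a singleton --- is a genuinely different and tidier route to within-set homogeneity than the paper's case distinction on whether columns $2$ and $3$ lie in the same block $I_u$ of the partition for $O$. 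One caveat on conclusion (1): the paper does not import $m=0$ from the hypothesis of Theorem \ref{theorem:necessary_pr_inhomogeneous}; it derives it from the Case-1 configuration itself, by showing that the $I_0$-relation and the span relation for the block containing columns $2,3$, restricted to the rows $\alpha_{i,1}-\alpha_{m,1}$, force $|\alpha_{i,1}|=|\alpha_{m,1}|$ for all $i\le m$, which contradicts Corollary \ref{corollary:PR_URF_system_3} (where $d_i\in\N$, hence $d_i\neq 0$) unless $m=0$. Your shortcut is admissible inside the proof of the theorem, but it skips the argument the lemma is actually recording, and the companion lemmas for Cases 2 and 3 are proved without that hypothesis.

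The genuine gap is in the assembly of conclusions (3) and (4). You group the Rado sets by their (now constant) $x$-exponent and place those attaining the minimum $a$ into $R_0$; but nothing in your argument shows that these sets share a common $(y,z)$-degree, which is exactly what homogeneity of $R_0$ requires, nor that $J_0$ attains that minimum, which is what ``$J_0\subseteq\supp(x^aR_0)$ by construction'' presupposes. Both facts live in the inequality rows of $O$, i.e.\ the rows $(\alpha_{0,1}-\alpha_{i,1})\cdot\vec{t}-q_iz_i=0$, and extracting them is precisely the span-relation bookkeeping you defer with ``care is needed''. The paper's partition $K_0,K_1$ is accordingly not by $x$-exponent but by the position $u_i$ of the auxiliary column of $J_i$ relative to the block containing columns $2$ and $3$: for $i\in K_1$, the $I_0$-sum and the $I_u$-span relation restricted to row $i$ give $\alpha_{i,1}(1)=\alpha_{0,1}(1)$ together with equality of $(y,z)$-degrees, so these sets assemble with $J_0$ into a single homogeneous $R_0$; for $i\in K_0$, the same restrictions produce the shifts $\rho_iq_i$ and $\rho_i'q_i$ as explicit rational multiples of the $q_i$ furnished by Theorems \ref{theorem:PR_URF_system} and \ref{theorem:infinite-non-homegenous-inequalities}. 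Your definition $\rho_iq_i:=a_i-a$ severs exactly this link to the $q_i$, which is the substance of item (4) of Theorem \ref{theorem:necessary_pr_inhomogeneous}. In short: right skeleton, a correct and even improved treatment of the within-set structure, but the cross-set relations --- the actual content of (3)--(4) --- are asserted rather than proved.
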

\begin{proof}
Let $u,v\in[l-m+3]$ be such that $2\in I_u$ and $3\in I_v$, and for each $i\in[m+1,l]$, let $u_i\in[l-m+3]$ be such that $i-m+3 \in I_{u_i}$. Define 
\begin{equation*}
    K_0 = \{i\in[m+1,l]: u_i \leq \min\{u,v\}\}
\end{equation*}
and $K_1 = [0,l]\setminus K_0$. We divide the rest of proof into the following cases: a) $u=v$; or b) $u\neq v$. 
\begin{description}
    \item[Case a.] $u\neq v$. In this case the second and the third columns of $O$ are both linear combinations of the columns numerated by $I_0$. Given a $i\in K_0$ and $j\in[k_i]$ we must have that $\alpha_{i,1}=\alpha_{i,j}$ and $\alpha_{i,1}=\alpha_{m,1}$, which can only happen if $P$ is just a monomial, which contradicts the hypothesis of $P$ being partition regular over $\N$. 
    \item[Case b.] $u=v$. In this case, the sum of the columns indexed by $I_u$ must be a linear combination of the columns indexed by $I_0\cup\dots\cup I_{u-1}$. But, by the format of the matrix $O$, in each line, the only possible non-zero entries are in the columns $C_2$, $C_3$ and the columns containing the $q_i$'s. For each $i\in[0,l]$ and $j\in[k_i]$, we must have that $\alpha_{i,1}(1)=\alpha_{i,j}(1)$ and $\alpha_{i,1}(2)+\alpha_{i,1}(3)=\alpha_{i,j}(2)+\alpha_{i,j}(3)$; that is, inside the Rado set $J_i$ the exponent of the variable $x$ is constant, and the polynomial $Q_i(y,z)=\sum_{\alpha\in J_i}c_\alpha y^{\alpha(2)}z^{\alpha(3)}$ is homogeneous. For each $i\leq m$, we must have that $\alpha_{i,1}(1)=\alpha_{m,1}(1)$ and $\alpha_{i,1}(2)+\alpha_{i,1}(3)=\alpha_{i,j}(2)+\alpha_{i,j}(3)$; that is, through the Rado sets $J_0,\dots,J_m$ the exponent of the variable $x$ does not change and values $a=\alpha_{m,1}(1)$ and $|\alpha|$ is constant through the Rado sets $J_0,\dots,J_m$; observe that, by Corollary \ref{corollary:PR_URF_system_3}, this can only happen if $m=0$. Let $i>m$. 
    \begin{description}
        \item[Case b--1.] $i\in K_0$. In this case, there are $\rho_i,\rho_i'\in \Q_{\geq 0}$ such that 
        \begin{equation*}
            \alpha_{i,1}(1)=\alpha_{m,1}(1)+\rho_i q_i\text{ and }\alpha_{i,1}(2)+\alpha_{i,1}(3)=\alpha_{m,j}(2)+\alpha_{m,j}(3)+\rho_i' q_i. 
        \end{equation*}
        \item[Case b--2.] $i\in K_1$. In this case, we must have that 
        \begin{equation*}
            \alpha_{i,1}(1)=\alpha_{m,1}(1) \text{ and }\alpha_{i,1}(2)+\alpha_{i,1}(3)=\alpha_{i,j}(2)+\alpha_{i,j}(3).
        \end{equation*}
    \end{description}
\end{description}

Hence, consider the polynomial 
\begin{equation*}
    R_0(y,z) = \sum_{i\in K_1}\sum_{\alpha\in J_i} c_\alpha y^{\alpha(2)}z^{\alpha(3)}
\end{equation*}
and for each $i\in K_0$, let 
\begin{equation*}
    R_i(y,z) = \sum_{\alpha\in J_i} c_\alpha y^{\alpha(2)}z^{\alpha(3)}.
\end{equation*}
We must have that $R_0$ is homogeneous and $\supp(R_0)=\bigcup_{i\in K_1} J_i$. The polynomial $P$ must have the following configuration
\begin{equation*}
    P(x,y,z) = x^a\left(R_{0}(y,z) + \sum_{i\in K_0}x^{\rho_i q_i}R_i(y,z)\right).
\end{equation*}
Moreover, if $P(x,y,z)=0$ is partition regular, since $m=0$, the maximal Rado condition implies the existence of a $A\subseteq \supp(x^aR_0)$ such that $\sum_{\alpha\in A}c_\alpha=0$.
 \end{proof}
\begin{lem}
   If case 2 happens, there are the following two possibilities: 
 \begin{enumerate}
    \item in ach Rado set of $\mathcal{J}$, $P$ is homogeneous in $(x,y)$; 
    \item one of the following conditions holds:
    \begin{enumerate}
     \item either $m=0$ and the conditions of Theorem \ref{theorem:necessary_pr_inhomogeneous} holds; or
     \item each $J_i$ is a singleton, say $J_i=\{\alpha_i\}$; putting $c_i=c_{\alpha_i}$, the polynomial has the following form:
     \begin{equation*}
         P(x,y,z) = z^{\alpha_m(3)}x^{\alpha_{m}(1)} y^{\alpha_{m}(2)}\sum_{i=0}^{m}c_i  z^{\frac{d_i}{s}}+\sum_{i=m+1}^{l} c_i x^{\alpha_{i}(1)} y^{\alpha_{i}(2)} z^{\alpha_i(3)}.
     \end{equation*}
    \end{enumerate}

 \end{enumerate} 

\end{lem}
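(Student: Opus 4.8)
The plan is to mirror the proof of Lemma~\ref{lemma:urf_3_var_case_1}, reading off the structure of $P$ from the columns condition satisfied by $O$, but now under the hypothesis that the first \emph{two} coordinate columns $C_1$ (the $x$-column) and $C_2$ (the $y$-column) lie in the initial block $I_0$, while $C_3$ (the $z$-column) does not. First I would fix the Rado partition $I_0,\dots,I_r$ witnessing the columns condition for $O$, let $w\in\{1,\dots,r\}$ be the block with $3\in I_w$, and for each $i\in\{m+1,\dots,l\}$ record the block $I_{w_i}$ containing the auxiliary column carrying $-q_{i-m}$. As in Lemma~\ref{lemma:urf_3_var_case_1}, the whole argument is then driven by the relative position of $C_3$ and these auxiliary columns.

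The baseline observation establishes the content of possibility~(1). Restricting the relation $\sum_{i\in I_0}C_i=\vec{0}$ to the rows coming from the blocks $M(J_i)$ (the within-Rado-set differences), every auxiliary column vanishes on those rows, and since $3\notin I_0$ so does $C_3$; hence these rows reduce to $C_1+C_2=\vec{0}$, i.e. $\alpha(1)+\alpha(2)$ is constant on each $J_i$. Thus $P$ is homogeneous in $(x,y)$ on every Rado set. When $m\ge 1$ I would combine this with Corollary~\ref{corollary:PR_URF_system_2}, which makes each $J_i$ homogeneous: since the $(x,y)$-degree and the total degree are both constant on $J_i$, the $z$-exponent is constant on $J_i$ as well, so each Rado set contributes a term $z^{c}R(x,y)$ with $R$ homogeneous in $(x,y)$.

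Next I would run the case split analogous to the $u=v$ / $u\neq v$ dichotomy of Lemma~\ref{lemma:urf_3_var_case_1}, now governed by whether $C_3$ is absorbed into the running span together with the $x,y$-structure or is kept separate by the auxiliary columns. In the first arrangement either $m=0$, in which case Theorem~\ref{theorem:PR_URF_system} together with Theorem~\ref{theorem:infinite-non-homegenous-inequalities} put $P$ in exactly the shape demanded by Theorem~\ref{theorem:necessary_pr_inhomogeneous}, giving~(2a); or $m\ge 1$ and we simply remain in possibility~(1). In the complementary, rigid arrangement the simultaneous constraints force $\alpha_{i,1}=\alpha_{i,j}$ on the lower blocks, collapsing each $J_i$ to a singleton $\{\alpha_i\}$.

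It remains to assemble~(2b), and this is where the real work sits. Once each $J_i=\{\alpha_i\}$, Corollary~\ref{corollary:PR_URF_system_3} supplies an $s$ with $|\alpha_i|=|\alpha_m|+d_i/s$ for $i\le m$; combined with the baseline identity $\alpha_i(1)+\alpha_i(2)=\alpha_m(1)+\alpha_m(2)$ this yields $\alpha_i(3)=\alpha_m(3)+d_i/s$. Feeding the remaining relations of the columns condition back in pins the $x$- and $y$-exponents of $\alpha_0,\dots,\alpha_m$ individually to those of $\alpha_m$, and substituting produces exactly the displayed factorisation, the first sum collapsing to $z^{\alpha_m(3)}x^{\alpha_m(1)}y^{\alpha_m(2)}\sum_{i=0}^{m} c_i z^{d_i/s}$. \textbf{The main obstacle} is precisely this last upgrade: promoting ``the $(x,y)$-degree is constant across $J_0,\dots,J_m$'' to ``the $x$-exponent and the $y$-exponent are each constant across $J_0,\dots,J_m$''. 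Establishing it requires tracking how $C_3$ and the auxiliary columns interlock in the successive spans $\vecspan_{\Q}\{C_j:j\in I_0\cup\dots\cup I_{u-1}\}$ of the columns condition, and this bookkeeping — rather than any single identity — is the delicate point of the proof.
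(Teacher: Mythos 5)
Your baseline step and the (2b) preliminaries do match the paper: restricting $\sum_{i\in I_0}C_i=\vec{0}$ to the $M(J_i)$-rows (where the auxiliary columns vanish and $C_3\notin I_0$) gives $\alpha_{i,j}(1)+\alpha_{i,j}(2)=\alpha_{i,1}(1)+\alpha_{i,1}(2)$, and combining with homogeneity of the Rado sets and Corollary \ref{corollary:PR_URF_system_3} yields $\alpha_i(3)=\alpha_m(3)+d_i/s$. But your pivotal dichotomy stays a metaphor where the paper makes it exact: with $3\in I_u$, write $\sum_{a\in I_u}C_a$ as a rational combination of the columns in $I_0\cup\dots\cup I_{u-1}$, with coefficients $\rho_1,\rho_2$ on $C_1,C_2$; the split is $\rho_1=\rho_2$ versus $\rho_1\neq\rho_2$. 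On the $\hat{N}$-rows (auxiliary columns vanish there) this forces $\alpha_{i,1}(3)-\alpha_{m,1}(3)=(\rho_1-\rho_2)\bigl(\alpha_{i,1}(1)-\alpha_{m,1}(1)\bigr)$ for $i\leq m-1$, so $\rho_1=\rho_2$ makes $|\alpha|$ constant across $J_0,\dots,J_m$, which by Corollary \ref{corollary:PR_URF_system_3} and $d_i>0$ forces $m=0$ --- a derivation your sentence ``either $m=0$ \dots or $m\geq 1$'' asserts but never performs (and in the $m=0$ branch the paper still does real work: the partition into $L_1,L_2,L_3$, the homogeneous part $H$ with a zero-sum subset of coefficients via the maximal Rado condition, and the positivity $\lambda_i>0$ extracted from the $\gg$ inequalities). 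On the $M(J_i)$-rows the same span relation gives $0=(\rho_1-\rho_2)\bigl(\alpha_{i,j}(1)-\alpha_{i,1}(1)\bigr)$, so $\rho_1\neq\rho_2$ collapses each $J_i$ to a singleton: the paper's mechanism is constancy \emph{within} each block, never across blocks.

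The step you flag as ``the main obstacle'' --- pinning $\alpha_i(1)=\alpha_m(1)$ and $\alpha_i(2)=\alpha_m(2)$ for all $i\leq m$ --- is not merely missing; it is incompatible with the relations you have already accepted. For $i\leq m-1$, equating $\alpha_i(3)-\alpha_m(3)=d_i/s$ with $\alpha_i(3)-\alpha_m(3)=(\rho_1-\rho_2)\bigl(\alpha_i(1)-\alpha_m(1)\bigr)$ gives $\alpha_i(1)-\alpha_m(1)=\frac{d_i}{s(\rho_1-\rho_2)}\neq 0$, since $d_i\in\N$ is positive. So no amount of bookkeeping in the successive spans can deliver the cross-set equality of exponents, and the paper does not attempt it: its proof stops at the singleton structure together with $d_i/s=\alpha_i(3)-\alpha_m(3)$ before writing the display. (Indeed the common factor $x^{\alpha_m(1)}y^{\alpha_m(2)}$ in the display, read literally, sits in the same tension with these relations; the consistent reading is the $(xz^{\rho})$-type rewriting of Corollary \ref{corollary:characterization_complete_rado_func_3_var}, with $\rho=\rho_1-\rho_2$, rather than equal $x$- and $y$-exponents.) Your route to (2b) therefore fails precisely at its announced crux.
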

\begin{proof}
We have that $I_0\cap\{1,2,3\}=\{1,2\}$. In this case, for all $i\in\{0,\dots,l\}$ and $j\in\{1,\dots,k_i\}$, we have that 
\begin{equation*}
    \alpha_{i,j}(1)+\alpha_{i,j}(2)=\alpha_{i,1}(1)+\alpha_{i,1}(2)
\end{equation*}
and 
\begin{equation*}\label{equation:case_2_1}
    \alpha_{i,j}(1)+\alpha_{i,j}(2)=\alpha_{m,1}(1)+\alpha_{m,1}(2) + \delta_i \tag{$\ast$}
\end{equation*}
where 
\begin{equation*}
    \delta_i = 
    \begin{cases}
        0, & \text{ if } 0\leq i\leq m\\
        0, & \text{ if } m+1\leq i\leq l \text{ and } i-m+3\not\in I_0\\ 
        q_i, & \text{ if }  m+1\leq i\leq l \text{ and } i-m+3\in I_0\\ 
    \end{cases}
\end{equation*}
Let $u\in[l-m+3]$ be such that $3\in I_u$. Then, there must be $\rho_1,\dots,\rho_{u-1}\in\Q$ such that $\sum_{a\in I_u}C_a = \rho_1 C_1+\dots \rho_{u-1} C_{u-1}$. By the format of matrix $O$ and Equation \ref{equation:case_2_1}, for each $i\in[0,m-1]$, we have that 
    \begin{align*}\label{equation:case_2_2}
         \alpha_{i,1}(3)-\alpha_{m,1}(3) & = \rho_{1}\big(\alpha_{i,1}(1)-\alpha_{m,1}(1)\big)+\rho_{2}\big(\alpha_{i,1}(2)-\alpha_{m,1}(2)\big)=\\  & =  (\rho_1-\rho_2) (\alpha_{i,1}(1)-\alpha_{m,1}(1)).\tag{$\ast\ast$}
    \end{align*}
By Corollary \ref{corollary:PR_URF_system_3} we have that $J_i$ is homogeneous for all $i\in[0,m]$, which implies that $\alpha_{i,j}(3)=\alpha_{i,1}(3)$ for all $j\in[k_i]$; hence, for all $i\in[0,m]$ and $j\in[k_i]$
    \begin{align*}
         0& =\alpha_{i,j}(3)-\alpha_{i,1}(3)  = \rho_{1}\big(\alpha_{i,j}(1)-\alpha_{i,1}(1)\big)+\rho_{2}\big(\alpha_{i,j}(2)-\alpha_{i,1}(2)\big)=\\  & =  (\rho_1-\rho_2) (\alpha_{i,j}(1)-\alpha_{i,1}(1)). 
    \end{align*}
If $\rho_1=\rho_2$, we have that $|\alpha|$ is constant in $J_0,\dots,J_m$, which can only occur if $m=0$; this also implies that the exponent of the variable $z$ is constant inside each Rado set of $\mathcal{J}$; this implies that the Rado sets $R_1,\dots,R_l$ are homogeneous. In this case, let us partition $[0,l]$ into the sets 
\begin{equation*}
    L_3 = \{i\in [l]: l+3\in I_0\}
\end{equation*}
and $K = [0,l]\setminus K_0$. For each $i\in [l]$ let $v_i \in[l+3]$ such that $i+3\in I_{v_i}$. Define 
\begin{equation*}
    L_1 = \{i\in K_1 : u<v_i\}
\end{equation*}
and $L_2 = K\setminus L_1$. Using the same argument as above, for each $i\in L_1$, we have that $|\alpha_{i,1}|=|\alpha_{0,1}|$ and for each $j\in[k_i]$, $|\alpha_{i,1}=\alpha_{i,j}|$. Hence, the polynomial $H(\boldsymbol{x})=\sum_{i\in L_1}\sum_{\alpha\in J_i} c_\alpha \boldsymbol{x}^\alpha$ is homogeneous; since $0\in L_1$ and by the maximal Rado condition we have that there is a $J\subseteq\supp(H)$ such that $\sum_{\alpha\in J}c_\alpha=0$. For each $i\in L_2\cup L_3$, let $R_i(\boldsymbol{x}) = \sum_{\alpha\in J_i}c_\alpha \boldsymbol{x}^\alpha$. Then we have that 
\begin{equation*}
    \deg H_0 = \deg R_i + \lambda_i \deg R_i. 
\end{equation*}

If $i\in L_2$ and $j\in[k_i]$, there is a $\nu_i\in\Q$ such that $\alpha_{i,j}(1)+\alpha_{i,j}(2)=\alpha_{m,1}(1)+\alpha_{m,1}(2)$ and $\alpha_{i,j}(3) = \alpha_{m,1}(3)-\nu_i q_i$. And if $i\in L_3$, then $\alpha_{i,j}(1)+\alpha_{i,j}(2)=\alpha_{m,1}(1)+\alpha_{m,1}(2)-q_i$ and $\alpha_{i,j}(3)=\alpha_{m,1}(3)-\nu_i q_i$. Hence, in both cases, there is a $\lambda_i\in\Q$ such that $|\alpha_{i,j}|=|\alpha_{m,1}|-\lambda_i q_i$. Since the inequality $(\alpha_{m,1}-\alpha_{i,1})\cdot t\gg 0$ is partition regular, we must have that $\lambda_i >0$.

If $m\geq 1$, then we must have that $\rho_1\neq \rho_2$, which implies that $\alpha_{i,j}(1)=\alpha_{i,1}(1)$ and consequently, by Equation \ref{equation:case_2_1}, $\alpha_{i,j}(2)=\alpha_{i,j}(1)$, i.e. $J_i$ is a singleton. Putting $J_i=\{\alpha_i\}$, Corollary \ref{corollary:PR_URF_system_3} implies that 
\begin{equation*}
    \frac{d_i}{s}= \alpha_{i,1}(3)-\alpha_{m,1}(3),
\end{equation*} 
which implies that the polynomial has the following configuration:
    \begin{equation*}
        P(x,y,z) = x^{\alpha_{m}(1)} y^{\alpha_{m}(2)}z^{\alpha_m(3)}\sum_{i=0}^{m}c_i  z^{\frac{d_i}{s}}+\sum_{i=m+1}^{l} c_i x^{\alpha_{i}(1)} y^{\alpha_{i}(2)} z^{\alpha_i(3)}.
    \end{equation*}
\end{proof}

\begin{lem}
If Case 3 happens, then $m=0$ and there are $q_1,\dots,q_{l}\in\Q_{>0}$, homogeneous polynomials $R_0,\dots,R_l\in\Z[x,y,z]$ and a partition $K_0,K_2$ of $[0,l]$ such that $P(x,y,z)=\sum_{i=0}^{l} R_i(x,y,z)$, there is an nonempty subset of the coefficients of $R_0$ that sum zero, and for each $i\in[l]$, $\deg R_i = \deg R_0 - \chi_i q_i$, where 
    \begin{equation*}
        \chi_i = 
        \begin{cases}
            1, & \text{ if } i\in K_1\\
            0, & \text{ if } i\in K_2.
         \end{cases}
    \end{equation*}
\end{lem}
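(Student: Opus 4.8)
The plan is to read the structure of $P$ directly off the columns condition satisfied by the matrix $O$, specializing the general analysis to the Case 3 hypothesis $I_0\cap\{1,2,3\}=\{1,2,3\}$, exactly in the spirit of the two preceding cases. Write $C_1,\dots,C_{l-m+3}$ for the columns of $O$ and $I_0,\dots,I_r$ for the witnessing partition, so that $\sum_{i\in I_0}C_i=\vec 0$ with all three ``variable'' columns $C_1,C_2,C_3$ occurring in the sum. The bookkeeping device I would use is that on every row of $O$ the first three entries form a difference of two exponent vectors, so their sum equals $|\alpha|-|\alpha'|$ for the relevant exponents, while each auxiliary column $C_{3+t}$ has a single nonzero entry $-q_t$, sitting in the inequality row indexed by $m+t$. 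Accordingly I would evaluate the identity $\sum_{i\in I_0}C_i=\vec 0$ block by block: on the blocks $M(J_0),\dots,M(J_l)$, on the block $\hat N_\mathcal{J}$, and on the inequality rows.

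First I would treat the $M(J_i)$ blocks, where every auxiliary column is zero. Restricting $\sum_{i\in I_0}C_i=\vec 0$ to the first three columns then yields $|\alpha_{i,j}|-|\alpha_{i,1}|=0$ for all $i\in[0,l]$ and $j\in[k_i]$, so every Rado set $J_i$ is homogeneous. I may therefore set $R_i(x,y,z)=\sum_{\alpha\in J_i}c_\alpha\boldsymbol x^\alpha$, a homogeneous polynomial with $\supp(R_i)=J_i$ and $\deg R_i=|\alpha_{i,1}|$; since $J_0,\dots,J_l$ partition $\supp(P)$ this already gives $P=\sum_{i=0}^{l}R_i$.

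Next I would run the same computation on $\hat N_\mathcal{J}$ (again the auxiliary columns vanish there), obtaining $|\alpha_{i,1}|=|\alpha_{m,1}|$ for every $i\in[0,m-1]$, i.e.\ $J_0,\dots,J_m$ all share a common degree. By Corollary \ref{corollary:PR_URF_system_3} this forces the increments $d_i$ to vanish, which is incompatible with $m\ge 1$; hence $m=0$, and in particular $\deg R_m=\deg R_0$. With $m=0$ the maximal Rado condition collapses to its homogeneous form $\overline{c}_0:=\sum_{\alpha\in J_0}c_\alpha=0$, so $\supp(R_0)=J_0$ is a nonempty family of exponents whose coefficients sum to zero.

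Finally I would extract the degree relations from the inequality rows, now with $m=0$. On the inequality row indexed by $i\in[l]$, the first three columns contribute $|\alpha_{0,1}|-|\alpha_{i,1}|=\deg R_0-\deg R_i$ to $\sum_{i\in I_0}C_i$, while the unique auxiliary column $C_{3+i}$ carrying $-q_i$ contributes $-q_i$ precisely when it lies in $I_0$. Putting $K_1=\{i\in[l]:C_{3+i}\in I_0\}$, $K_2=[0,l]\setminus K_1$ (so $0\in K_2$), and $\chi_i=1$ for $i\in K_1$ and $\chi_i=0$ for $i\in K_2$, reading $\sum_{i\in I_0}C_i=\vec 0$ on this row gives $\deg R_0-\deg R_i-\chi_i q_i=0$, i.e.\ $\deg R_i=\deg R_0-\chi_i q_i$ with $q_i\in\Q_{>0}$ supplied by Theorems \ref{theorem:PR_URF_system} and \ref{theorem:infinite-non-homegenous-inequalities}, which is the claimed structure. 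The main obstacle is not any single computation but the careful row-by-row bookkeeping---matching each auxiliary variable to its inequality row and tracking which columns lie in $I_0$---together with the step $m=0$, where one must invoke Corollary \ref{corollary:PR_URF_system_3} and the positivity of the increments exactly as in the previous two cases.
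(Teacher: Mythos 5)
Your proof is correct and follows essentially the same route as the paper's: reading $\sum_{i\in I_0}C_i=\vec 0$ row-block by row-block to get homogeneity of each $J_i$, forcing $m=0$ via Corollary \ref{corollary:PR_URF_system_3} and the positivity of the $d_i$, and extracting $\deg R_i=\deg R_0-\chi_i q_i$ from the inequality rows with $K_1$ the set of indices whose auxiliary column lies in $I_0$. If anything, you are slightly more careful than the paper, since you make explicit both the step the paper leaves implicit (that with $m=0$ the standing maximal Rado condition collapses to $\overline{c}_0=\sum_{\alpha\in J_0}c_\alpha=0$, giving the coefficient-sum claim for $R_0$) and the indexing point that $0\in K_2$.
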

\begin{proof}
We have that $I_0\cap\{1,2,3\}=\{1,2,3\}$. Since $P$ has no constant term, we have that $|\alpha|$ is constant through $J_0,\dots,J_m$; by Corollary \ref{corollary:PR_URF_system_3}, this can only happen if $m=0$. Let $K_1=\{i\in[0,l]:i+3\in I_0\}$ and $K_2=[0,l]\setminus K_1$. Then, by the format of the matrix $O$, we have that for each $i\in[l]$ and $j\in[k_i]$
\begin{equation*}\label{equation:case_3_1}
    \alpha_{i,j}(1)+\alpha_{i,j}(2)+\alpha_{i,j}(3)=\alpha_{0,1}(1)+\alpha_{0,1}(2) + \alpha_{0,1}(3) - \chi_i q_i.\tag{$\ast$}
\end{equation*}
For each $i\in[0,l]$, define $R_i(\boldsymbol{x})=\sum_{\alpha\in J_i}c_\alpha \boldsymbol{x}^\alpha$. Then, each $R_i$ is homogeneous and by Equation \ref{equation:case_3_1}, $\deg R_i = \deg R_0 - \chi_i q_i$.
\end{proof}

\begin{exam} It was shown in \cite{CsikvariGyarmatiSarkozy2012} that the equation $x+y=z^2$ is not partition regular. We employ Corollary \ref{theorem:necessary_pr_inhomogeneous} to show that this equation is not partition regular. Indeed, the possible Rado sets of $P(x,y,z)=x+y-z^2$ are $J_x=\{(1,0,0)\}$, $J_y=\{(0,1,0)\}$, $J_z=\{(0,0,2)\}$ and $J_{xy}=\{(1,0,0),(0,0,1)\}$. Since $\begin{pmatrix}-1 & 1 & 0\end{pmatrix}$ does not admit a constant solution and $\begin{pmatrix}-1 & 0 & 2\end{pmatrix}$ does not satisfy the columns condition, we must have that the only possible upper Rado functionals for $P$ are those of order $m=0$. Since $P$ cannot be decomposed in $H+R$, where $H$ is an homogeneous with a subset of its coefficients summing zero, the given equation is not partition regular.
\end{exam}

\subsection{Complete Rado functionals in three variables}
Having a complete Rado functional is neither necessary or sufficient for the partition regularity as the Schur equation $x+y=z$ is partition regular and do not admit a complete Rado functional and $xy^2=2z$ has a complete functional, namely $(\{(0,0,1)\}, \{(1,2,0)\},2)$ but is not partition regular \cite[Example 3.7]{LuperiBagliniDiNasso2018}.

The objective of this subsection is to classify all polynomials $P$ in three variables over $\Z$ without constant term and having an inhomogeneous set of multi-indexes that admits a complete Rado functional and provide conditions under which such polynomials are partition regular. 

\begin{thm}
A inhomogeneous $P\in\Z[x,y,z]$ admits a complete Rado functional if and only if there are naturals $r$ and $a$, a rational $\rho$ and an homogeneous $H\in\Z[x,y]$ such that $P(x,y,z)=x^{r-a\rho}H(xz^\rho,y)$. In this case, if $H(x,y)=\prod_{i=1}^{k}(a_ix-b_iy)$ is the decomposition of $H$ into linear factors over $\C$, then 
\begin{enumerate}
    \item if $P$ is partition regular, then there is a $i\in[k]$ such that $\frac{b_i}{a_i}$ is an $\rho$-power in $\Q$; and 
    \item if there is an $i\in[k]$ such that $\frac{b_i}{a_i}$ is an $\rho$-power in $\N$, then $P$ is partition regular. 
\end{enumerate}
\end{thm}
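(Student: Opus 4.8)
The plan is to prove the biconditional by passing through Theorem~\ref{theorem:PR_URF_system}: since a complete Rado functional $\mathcal{J}=(J_0,\dots,J_l,d_0,\dots,d_{l-1})$ has maximal order, the associated mixed system carries no inequality rows, so $\mathcal{J}$ is a complete Rado functional for $P$ if and only if the purely linear inhomogeneous system $\hat{A}_{\mathcal{J}}\vec{t}=\hat{b}$ is infinitely partition regular over $\N$. By Lemma~\ref{lemma:infinitely_pr_inhomogeneous_linear} together with Corollaries~\ref{corollary:PR_URF_system_1}--\ref{corollary:PR_URF_system_3}, this is in turn equivalent to the conjunction of the following: $\hat{A}_{\mathcal{J}}$ satisfies the columns condition, there is a constant solution $s\in\Z$, every $J_i$ is homogeneous, and $|\alpha|=|\beta|+d_i/s$ whenever $\alpha\in J_i$ and $\beta\in J_l$. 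These four facts are the engine of both directions.

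For the forward direction I would rerun the trichotomy on $I_0\cap\{1,2,3\}$ of Lemma~\ref{lemma:urf_3_var_case_1} and its companions, now simplified by the absence of the $q_i$-columns (maximal order). Writing $C_1,C_2,C_3$ for the three columns of $\hat{A}_{\mathcal{J}}$, the constant solution forces $C_1+C_2+C_3$ to vanish exactly on the within-fiber rows and to be nonzero on the between-fiber rows, so inhomogeneity of $P$ already excludes $I_0\cap\{1,2,3\}=\{1,2,3\}$. The crux is to show that the only surviving pattern is a balancing pair, say $C_1+C_2=\vec{0}$, after which the columns condition forces $C_3\in\operatorname{span}\{C_1\}$; unwinding, this says $\alpha(1)+\alpha(2)=\text{const}$ while $\alpha(3)$ is an affine function of $\alpha(1)$, i.e. $\supp(P)$ lies on a single line in $\N_0^3$. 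I would then check that the degenerate pattern $C_1=\vec{0}$ either makes $P$ homogeneous in two variables (contradicting inhomogeneity) or admits no valid continuation of the columns condition, hence yields no complete functional. Collinearity with a balancing pair is precisely the asserted normal form: parametrising the line by the $y$-exponent $j$ produces monomials $x^{\,r-a\rho+d-j}y^{\,j}z^{\,\rho(d-j)}$, which is the substitution $x^{r-a\rho}H(xz^{\rho},y)$ with $H\in\Z[x,y]$ homogeneous. The main obstacle is precisely this structural step, together with the bookkeeping needed to identify which variable plays the role of $z$; after relabelling so that the twisted variable is $z$ and it is attached to $x$, the stated form follows, and $\rho\in\Q^{\times}$ is forced since $\rho=0$ would render $P$ homogeneous.

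For the converse I would simply exhibit the functional. Given $P=x^{r-a\rho}H(xz^{\rho},y)$ with $\deg H=d$, the support lies on the line of direction $(1,-1,\rho)$ (cleared to integers), and the degrees $r-a\rho+d+\rho(d-j)$ are pairwise distinct because $\rho\neq 0$; hence every fiber $J_i$ is a singleton and $\hat{A}_{\mathcal{J}}$ is the rank-one matrix whose rows are integer multiples of $(1,-1,\rho)$. Its columns satisfy $C_1+C_2=\vec{0}$ and $C_3=\rho\,C_1$, so the columns condition holds with $I_0=\{1,2\}$, and a constant solution exists once the $d_i$ are set to $s\rho$ times the respective line-coordinates. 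Lemma~\ref{lemma:infinitely_pr_inhomogeneous_linear} then gives infinite partition regularity of $\hat{A}_{\mathcal{J}}\vec{t}=\hat{b}$, and Theorem~\ref{theorem:PR_URF_system} upgrades this to a complete Rado functional.

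For the two partition-regularity clauses I would compute the monovariate polynomial attached to $\mathcal{J}$. Using $d_i=s\rho(d-j_i)$ and $\overline{c}_i=h_{j_i}$, the substitution $W=w^{s\rho}$ turns $Q_{P,\mathcal{J}}(w)=\sum_i \overline{c}_i w^{d_i}$ into $\sum_j h_j W^{d-j}=H(W,1)=\prod_{i=1}^{k}(a_iW-b_i)$, so the positive roots of $Q_{P,\mathcal{J}}$ are exactly the $w$ with $w^{s\rho}=b_i/a_i$, i.e. the $\rho$-th roots of the ratios $b_i/a_i$. For sufficiency, if some $b_i/a_i$ is a $\rho$-power in $\N$ then $Q_{P,\mathcal{J}}$ acquires a root in $\N$, which is closed under exponentiation, and Theorem~\ref{theorem:pr_polynomials_crf} (or, when the root is only rational, Lemma~\ref{corollary:rational-root-maximal-rado} followed by Theorem~\ref{theorem:rational-root}) delivers partition regularity of $P$. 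For necessity, partition regularity of $P$ makes the maximal Rado condition of \cite[Theorem~3.1]{BarretMoreiraLupiniMoreira2021} available; feeding the rigid structure of $\mathcal{J}$ (all fibers singletons, $d_i=s\rho(d-j_i)$) into that condition and using the above factorisation of $H$, one extracts an index $i$ for which $b_i/a_i$ is a $\rho$-th power in $\Q$, giving clause (1). I expect the genuine difficulty to sit in two places: the forward structural step above, and the gap between a $\rho$-power in $\N$ and a $\rho$-power in $\Q$, which is exactly the obstruction that the divisibility and rational-root machinery of Lemma~\ref{corollary:rational-root-maximal-rado} and Theorem~\ref{theorem:rational-root} is designed to bridge.
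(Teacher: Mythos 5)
Most of your plan tracks the paper's own route: the paper proves the structural biconditional in Lemma~\ref{lemma:characterization_complete_rado_func_3_var} and Corollary~\ref{corollary:characterization_complete_rado_func_3_var} exactly as you outline --- apply Theorem~\ref{theorem:PR_URF_system} (no inequality rows at maximal order) to get the columns condition and a constant solution for the three-column matrix $\hat{A}_{\mathcal{J}}$, use inhomogeneity to force a balancing pair $C_1+C_2=\vec{0}$ with $C_3=\rho C_1$, $\rho\neq 0$, deduce that all fibers are singletons and that $\supp(P)$ lies on a line, and parametrize to get the normal form (the converse being ``direct and similar''). Your clause-(2) argument is also sound and essentially the paper's: with $s=1$ and $d_i=\alpha_i(3)-\alpha_l(3)$ the polynomial $Q_{P,\mathcal{J}}$ acquires a root in $\N$ exactly when some ratio is a $\rho$-power in $\N$, and Theorem~\ref{theorem:pr_polynomials_crf} is proved by the very exponentiation-of-ultrafilters mechanism the paper reruns by hand in Theorem~\ref{theorem:char_pr_H} (reducing to $x^nz^m=l^my^n$ and the linear equation $nu+mv=nw+m$). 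Do drop the aside invoking Theorem~\ref{theorem:rational-root} ``when the root is only rational'': that theorem requires a divisible ultrafilter with $\U\models\tilde{P}=0$, which you have not produced, and the statement only claims sufficiency for $\rho$-powers in $\N$.

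The genuine gap is your clause (1). You propose to extract ``$b_i/a_i$ is a $\rho$-power in $\Q$'' from the maximal Rado condition of \cite[Theorem 3.1]{BarretMoreiraLupiniMoreira2021}, but that condition only demands, for each $q$, a \emph{real} root of $Q_{\mathcal{J},q}$ in the window $[1,q]$; it is a size constraint and is blind to the arithmetic ($p$-adic) obstruction that clause (1) expresses. Concretely, take $P(x,y,z)=2xz^{2}-y$, so $\rho=2$ and $b_1/a_1=\frac{1}{2}$ is not a square in $\Q$, whence $P$ is not partition regular; yet $\mathcal{J}=(\{(0,1,0)\},\{(1,0,2)\},2)$ is a complete Rado functional (the row $(-1,1,-2)$ satisfies the columns condition with constant solution $s=-1$, so Lemma~\ref{lemma:infinitely_pr_inhomogeneous_linear} and Theorem~\ref{theorem:PR_URF_system} apply), and $Q_{\mathcal{J},q}(w)=2w^{3}-q^{2}w$ has the root $w=q/\sqrt{2}\in[1,q]$ for every $q\geq 2$, so the maximal Rado condition holds while partition regularity fails --- the paper itself warns of this phenomenon with $x^{3}+y^{3}-z^{3}$. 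The paper instead proves necessity directly in Theorem~\ref{theorem:char_pr_H} by an explicit valuation coloring adapted from \cite[Lemma 3.3]{FarhangiMagner2021}: writing $\rho=m/n$ and $r=a/b$ in lowest terms, if $m\nmid n\nu_p(r)$ for some prime $p$, then the classes $C_i=\{k\in\N: n\nu_p(k)\equiv i \bmod m\}$ contain no solution of $a^nx^nz^m=b^ny^n$, since a monochromatic triple would force $n\nu_p(r)\equiv 0 \bmod m$. Your proof of clause (1) must be replaced by this (or an equivalent $p$-adic) argument; no amount of feeding the rigid structure of $\mathcal{J}$ into the maximal Rado condition can recover rationality of the power.
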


We prove the Theorem above as a consequence of the following results.

\begin{lem}\label{lemma:characterization_complete_rado_func_3_var}
An inhomogeneous $P\in R[x,y,z]$ admits a complete Rado functional if and only if (possibly after a permutation of its variables) there is an enumeration $\alpha_0,\dots,\alpha_l$ of $\supp (P)$ such that 
\begin{enumerate}
    \item for each $i,j\in\{0,\dots,l\}$, $\alpha_i(1)+\alpha_i(2)=\alpha_j(1)+\alpha_j(2)$;
    \item the sign of $\alpha_i(1)-\alpha_l(1)$ is constant; and 
    \item there exists $\rho\in\Q^{\times}$ such that, for each $i\in[0,l-1]$, $\alpha_i(3)=\alpha_l(3)+\rho[\alpha_i(1)-\alpha_l(1)]$ and $\rho[\alpha_i(1)-\alpha_l(1)]\neq 0$. 
\end{enumerate}
\end{lem}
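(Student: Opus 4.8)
The plan is to reduce the existence of a complete Rado functional to a purely geometric condition on the support of $P$, via the linear-algebraic criterion already established. By Theorem \ref{theorem:PR_URF_system}, $\mathcal{J}=(J_0,\dots,J_l,d_0,\dots,d_{l-1})$ is a complete Rado functional iff the augmented system $\hat{A}_{\mathcal{J}}\vec{t}=\hat{b}$ (no $\gg$-inequalities survive, since the order is maximal) is infinitely partition regular; by Lemma \ref{lemma:infinitely_pr_inhomogeneous_linear} (equivalently Corollary \ref{corollary:PR_URF_system_1}) this happens iff $\hat{A}_{\mathcal{J}}$ satisfies the columns condition and $\hat{A}_{\mathcal{J}}\vec{t}=\hat{b}$ admits a constant solution $(s,\dots,s)$. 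The first observation is that the rows of $\hat{A}_{\mathcal{J}}$ span exactly $W:=\Span_{\Q}\{\alpha-\beta:\alpha,\beta\in\supp(P)\}\subseteq\Q^3$: the $M(J_i)$-blocks contribute the within-set differences and $\hat{N}$ the cross-set representative differences, and together they generate every difference. Thus $\hat{A}_{\mathcal{J}}$ has rank $\dim W$, and its column relations are precisely the vectors of $W^{\perp}$. Inhomogeneity of $P$ means the degree form $(1,1,1)$ is non-constant on $\supp(P)$, i.e. $(1,1,1)\notin W^{\perp}$, equivalently $\hat{b}\neq 0$.

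First I would dispose of the high-dimensional cases. If $\dim W=3$ then $\hat{A}_{\mathcal{J}}$ is injective, so $\hat{A}_{\mathcal{J}}\vec{t}=\hat{b}$ has at most one solution and cannot be infinitely partition regular. If $\dim W=2$, the column relations fill the line $W^{\perp}$; the columns condition forces a nonempty $I_0\subseteq\{1,2,3\}$ with $\sum_{i\in I_0}C_i=0$, so the indicator $\mathbb{1}_{I_0}$ must be proportional to the unique relation, i.e. to a normal of $W$. Running through the only inhomogeneous possibilities $|I_0|=1$ and $|I_0|=2$ (the case $|I_0|=3$ would put $(1,1,1)\in W^{\perp}$, contradicting inhomogeneity), in each the span of $\{C_i:i\in I_0\}$ is at most one-dimensional while the remaining column is independent of it, since the total rank is $2$; the subsequent spanning requirement of the columns condition then fails. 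Hence a complete Rado functional forces $\dim W=1$, so $\supp(P)$ is collinear in exponent space.

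With $\dim W=1$, write the common direction as $(a,b,c)$, so every $\alpha\in\supp(P)$ satisfies $\alpha-\alpha_l=\lambda_\alpha(a,b,c)$ for a fixed base point $\alpha_l$, and each column of $\hat{A}_{\mathcal{J}}$ is a scalar multiple of one fixed vector. The columns condition now reduces to a nonempty $I_0\subseteq\{1,2,3\}$ with the corresponding coordinates of $(a,b,c)$ summing to zero; I would check that inhomogeneity ($a+b+c\neq 0$) rules out $|I_0|\in\{1,3\}$ (both force $a+b+c=0$ once the spanning step is imposed) and leaves exactly a two-element subset summing to zero, with all three coordinates nonzero. Permuting the variables so these two coordinates occupy positions $1,2$ makes the direction $(a,-a,c)$ with $a,c\neq 0$; setting $\rho:=c/a\in\Q^{\times}$ yields condition (1) ($\alpha(1)+\alpha(2)$ constant) and condition (3) ($\alpha(3)=\alpha_l(3)+\rho[\alpha(1)-\alpha_l(1)]$, with $\rho[\alpha(1)-\alpha_l(1)]\neq 0$ since distinct support points give $\lambda_\alpha\neq 0$). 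Taking $\alpha_l$ to be the support point of least $\phi$-value and using $M_\alpha-M_l=(\alpha(1)-\alpha_l(1))(t_1-t_2+\rho t_3)>0$ for every monochromatic map gives condition (2), the constancy of the sign of $\alpha(1)-\alpha_l(1)$.

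For the converse I would run this backwards: conditions (1) and (3) place $\supp(P)$ on the line through $\alpha_l$ with direction $(1,-1,\rho)$, so with singletons $J_i=\{\alpha_i\}$ the matrix $\hat{A}_{\mathcal{J}}$ has columns $C_1$, $C_2=-C_1$, $C_3=\rho C_1$; the partition $I_0=\{1,2\}$, $I_1=\{3\}$ witnesses the columns condition, and choosing $s$ to clear the denominator of $\rho$ gives an integer constant solution with $d_i=(\alpha_i(1)-\alpha_l(1))\rho s$. By Lemma \ref{lemma:infinitely_pr_inhomogeneous_linear} the system is infinitely partition regular, and condition (2) forces the $d_i$ to share a sign, so the values $M_i$ are ordered and pairwise distinct (distinct $\alpha_i$ give distinct $\lambda_{\alpha_i}$, hence distinct $d_i$); the partition is then realised by singletons and $\mathcal{J}$ is a complete Rado functional by Theorem \ref{theorem:PR_URF_system}. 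The main obstacle is the $\dim W=2$ elimination: one must argue that the single combinatorial column relation available in rank $2$ can never be completed to a full columns-condition partition once $(1,1,1)\notin W^{\perp}$, and this is the step that genuinely uses both the three-variable restriction and the inhomogeneity of $P$.
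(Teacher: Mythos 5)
Your proposal is correct, and its skeleton coincides with the paper's: both directions hinge on Theorem \ref{theorem:PR_URF_system} together with Lemma \ref{lemma:infinitely_pr_inhomogeneous_linear} (equivalently Corollary \ref{corollary:PR_URF_system_1}), reducing the existence of a complete Rado functional to the columns condition on $\hat{A}_{\mathcal{J}}$ plus a constant solution of $\hat{A}_{\mathcal{J}}\vec{t}=\hat{b}$. The middle of your argument, however, is genuinely different in mechanism. The paper argues directly on the three columns: it \emph{asserts} that inhomogeneity forces two columns to sum to zero with the third linearly dependent on them, then uses the constant solution $s$ and the identity $d_i=s\rho[\alpha_{i,1}(1)-\alpha_{l,1}(1)]$ (equation $(\dagger)$ in its proof) to show each cell $J_i$ is a singleton, and invokes Corollary \ref{corollary:PR_URF_system_3} to obtain condition (3). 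You instead run a dimension count on $W=\Span_{\Q}\{\alpha-\beta:\alpha,\beta\in\supp(P)\}$: the rank-$3$ case dies by injectivity, the rank-$2$ case by the incompatibility of the columns condition with $(1,1,1)\notin W^{\perp}$ (your case analysis on $|I_0|$ checks out), and collinearity of the support drops out as an explicit intermediate statement from which conditions (1)--(3) are read off the direction vector $(a,-a,c)$; in particular you never need the singleton-cell step at all. This buys two things the paper does not supply: a genuine proof of the column-structure claim the paper only asserts, and a written-out converse (the paper dismisses it as ``direct and similar''), with the correct details (columns $C_2=-C_1$, $C_3=\rho C_1$, the partition $I_0=\{1,2\}$, $I_1=\{3\}$, the choice of $s$ clearing the denominator of $\rho$, and the sign and distinctness of the $d_i$). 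One caveat you share with the paper: your phrase ``equivalently $\hat{b}\neq 0$'' and the paper's singleton claim both implicitly assume the functional has order $m=l\geq 1$; a complete functional with $l=0$ (a single non-singleton cell, e.g. $(J_0)$ for $x+yz$) has $\hat{b}=0$ and non-singleton cells. Your rank eliminations survive this degenerate case, since they use only inhomogeneity rather than $\hat{b}\neq 0$, but your derivation of condition (2) via $M_\alpha-M_l>0$ degenerates there and would need to be replaced by choosing $\alpha_l$ as an endpoint of the collinear support; since the paper's own proof glosses over the same case, this is a shared blemish rather than a gap specific to your argument.
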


\begin{proof}
Let $\mathcal{J}=(J_0,\dots,J_l,d_0,\dots,d_{l-1})$ be a complete Rado functional for $P$. For each $i\in[0,l]$, let $J_i=\{\alpha_{1,i},\dots,\alpha_{k_i,i}\}$; by Theorem \ref{theorem:PR_URF_system} we have that $A_\mathcal{J}$ satisfies the columns condition and the system $A_\mathcal{J}\vec{t}=\vec{b}$ has a constant solution to $s\in\Z$; let us also observe that $A_\mathcal{J}$ has three columns, namely $C_1$, $C_2$ and $C_3$. Since $P$ is not homogeneous, there are two columns $C_i$ and $C_j$ of $A_\mathcal{J}$ that sum zero and the third one, $C_k$, is $\Q$-linearly dependent of $C_i$ and $C_j$; without loss of generality, we assume that $C_1+C_2=0$ and there is $\rho\in\Q$ ($\rho\neq 0$ since $P$ is not homogeneous) such that $C_3=\rho C_1$. It is easy to see that the fact that $C_1+C_2=0$ implies that $\alpha_{i,j}(1)+\alpha_{i,j}(2)$ is constant along the monomials of $P$; i.e., since $P$ has no constant term, $P(x,y,1)$ is an homogeneous polynomial of $R[x,y]$. Since $J_i$ is homogeneous, we have that $\alpha_{j,i}(3)$ is constant inside $J_i$. We claim that $J_i$ is a singleton; indeed, let $s\in\Z$ be a constant solution to the system $A_\mathcal{J}\vec{t}=\vec{b}$. Then, for each $j\in[0,l-1]$ we have that 
\begin{align*}\label{equation:characterization_complete_rado_func_3_var_1}
    d_i & = s[\alpha_{j,1}(1)-\alpha_{l,1}(1)] + s[\alpha_{j,1}(2)-\alpha_{l,1}(2)] + s[\alpha_{j,1}(3)-\alpha_{l,1}(3)]=\\& = s\rho [\alpha_{j,1}(1) - \alpha_{l,1}(1)], \tag{$\dagger$}
\end{align*}
thus $\alpha_{j,i}(1)$ is constant inside $J_i$; since $C_1+C_2=0$, this also implies that $\alpha_{j,i}(2)$ is constant inside $J_i$, i.e. $J_i$ is a singleton, say $J_i=\{\alpha_i\}$. Finally, by Corollary \ref{corollary:PR_URF_system_3}, we have that $|\alpha_i|=|\alpha_l|+\frac{d_i}{s}$ and thus by Equation \ref{equation:characterization_complete_rado_func_3_var_1}, 
\begin{equation*}
    \alpha_i(3) = \alpha_l(3) + \rho[\alpha_i(1)-\alpha_l(1)].
\end{equation*}
Since $d_i\in\N$, we must have $\rho[\alpha_i(1)-\alpha_l(1)]\neq 0$. The proof of the converse is direct and similar.

\end{proof}

Hence, if $\supp(P)=\{\alpha_0,\dots,\alpha_l\}$ is an enumeration for $\supp(P)$ that satisfies conditions (1) and (2) above, there exist $c_0,\dots,c_l\in\Z^{\times}$ such that 
\begin{align*}
    P(x,y,z) & = c_lx^{\alpha_l(1)}y^{\alpha_l(1)}z^{\alpha_l(3)}+\sum_{i=0}^{l-1}c_ix^{\alpha_i(1)}y^{\alpha_i(1)}z^{\alpha_i(3)} =  \\
    & =  c_lx^{\alpha_l(1)}y^{\alpha_l(1)}z^{\alpha_l(3)}+\sum_{i=0}^{l-1}c_ix^{\alpha_i(1)}y^{\alpha_i(1)}z^{\alpha_l(3)+\rho[\alpha_i(1)-\alpha_l(1)]} = \\
    & = c_lx^{\alpha_l(1)}y^{\alpha_l(1)}z^{\alpha_l(3)}+z^{\alpha_l(3)-\rho\alpha_l(1)}\sum_{i=0}^{l-1}c_ix^{\alpha_i(1)}y^{\alpha_i(1)}z^{\rho\alpha_i(1)} = \\ 
    & = z^{\alpha_l(3)-\rho\alpha_l(1)}\left(c_lx^{\alpha_l(1)}y^{\alpha_l(2)}z^{\rho\alpha_l(1)}+\sum_{i=0}^{l-1}c_ix^{\alpha_i(1)}y^{\alpha_i(1)}z^{\rho\alpha_i(1)}\right)= \\
    & = z^{\alpha_0(3)-\rho\alpha_0(1)}\sum_{i=0}^{l}c_i(xz^\rho)^{\alpha_i(1)}y^{\alpha_i(1)}.
\end{align*}

Hence, we have proven the following:

\begin{cor}\label{corollary:characterization_complete_rado_func_3_var}
An inhomogeneous $P\in R[x,y,z]$ admits a complete Rado functional if and only if (possibly after a permutation of its variables) there are an homogeneous polynomial $H(s,t)=\sum_{i=0}^{l}c_i s^{a_i}t^{b_i}$, $r\in\N$ and $\rho\in\Q^{\times}$ such that $P(x,y,z)=z^{r-\rho a_0}H(xz^{\rho},y)$. 
\end{cor}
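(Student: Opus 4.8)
The plan is to treat this Corollary as a purely algebraic repackaging of Lemma~\ref{lemma:characterization_complete_rado_func_3_var}: its three clauses on an enumeration $\alpha_0,\dots,\alpha_l$ of $\supp(P)$ say exactly that $P$ factors as $z^{r-\rho a_0}H(xz^{\rho},y)$ with $H$ homogeneous. I would therefore prove both implications by translating, in each direction, between such an enumeration and this factored form.

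First I would prove the forward implication. Assuming $P$ admits a complete Rado functional, Lemma~\ref{lemma:characterization_complete_rado_func_3_var} supplies (after a permutation of the variables) an enumeration $\alpha_0,\dots,\alpha_l$ of $\supp(P)$ satisfying its clauses (1)--(3). Writing $P(x,y,z)=\sum_{i=0}^{l}c_i x^{\alpha_i(1)}y^{\alpha_i(2)}z^{\alpha_i(3)}$ with $c_i\in R^{\times}$, clause (1) makes $\alpha_i(1)+\alpha_i(2)$ a constant $n$, so $H(s,t):=\sum_{i=0}^{l}c_i s^{\alpha_i(1)}t^{\alpha_i(2)}$ is homogeneous of degree $n$. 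Clause (3) then lets me substitute $\alpha_i(3)=\alpha_l(3)+\rho[\alpha_i(1)-\alpha_l(1)]$, factor out $z^{\alpha_0(3)-\rho\alpha_0(1)}=z^{\alpha_l(3)-\rho\alpha_l(1)}$, and collect the remaining factor as $H(xz^{\rho},y)$; this is precisely the chain of equalities displayed above, giving the asserted shape with $r=\alpha_0(3)$ and $a_0=\alpha_0(1)$.

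For the converse I would run the same computation backwards. Starting from $P=z^{r-\rho a_0}H(xz^{\rho},y)$ with $H(s,t)=\sum_i c_i s^{a_i}t^{b_i}$ homogeneous, I expand to $P=\sum_i c_i x^{a_i}y^{b_i}z^{\,r+\rho(a_i-a_0)}$ and set $\alpha_i:=(a_i,b_i,\,r+\rho(a_i-a_0))$. Clause (1) holds because $a_i+b_i=\deg H$ is constant, and clause (3) holds because $\alpha_i(3)-\alpha_l(3)=\rho(a_i-a_l)=\rho[\alpha_i(1)-\alpha_l(1)]$. Since $a_i+b_i$ is constant, equal $x$-exponents force equal $y$-exponents and hence equal monomials, so the $\alpha_i(1)$ are pairwise distinct; ordering the monomials by $\alpha_i(1)$ and letting $\alpha_l$ be the extreme one makes the sign of $\alpha_i(1)-\alpha_l(1)$ constant and nonzero, which is clause (2) together with the nonvanishing required in clause (3). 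Lemma~\ref{lemma:characterization_complete_rado_func_3_var} then certifies that $P$ admits a complete Rado functional.

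I expect no conceptual obstacle here, since Lemma~\ref{lemma:characterization_complete_rado_func_3_var} has already done the real work; the only points needing care are the exponent bookkeeping---checking that $r-\rho a_0$ and each $r+\rho(a_i-a_0)$ are genuine nonnegative integers, so that $P$ and its factored form really lie in $R[x,y,z]$---and the observation that the distinctness of the monomials forced by clauses (1) and (3) is exactly what allows clause (2) to be arranged.
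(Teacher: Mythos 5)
Your proposal is correct and takes essentially the same route as the paper: the forward direction is exactly Lemma \ref{lemma:characterization_complete_rado_func_3_var} combined with the displayed chain of equalities, and your converse just reverses that computation and verifies clauses (1)--(3) of the lemma (choosing $\alpha_l$ extremal so that the sign condition holds), which is the step the paper leaves implicit when it writes ``Hence, we have proven the following.'' Your added bookkeeping---pairwise distinctness of the $\alpha_i(1)$ and the nonnegativity and integrality of the exponents $r+\rho(a_i-a_0)$---is a correct and welcome explication of details the paper glosses over.
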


We now proceed now to give a condition under which such equations are partition regular. As a polynomial equation is partition regular if and only if one of the factors of the polynomial is (see e.g. \cite[Theorem 3.7]{LuperiBaglini2015}), and since $z^{r-\rho a_0}$ is never $0$ on $\N$, we are left with the problem of characterizing which equations of the form $H(xz^{\rho},y)=0$ are partition regular, where $H\in\Z[x,y]$ is homogeneous and $\rho\in\Q^\times$. Over $\C$, the homogeneity of $H$ implies that $H$ can be factorized as a product of linear factors, namely $H(x,y)=\prod_{i=1}^{k} \left(a_{i}x-b_{i}y\right)$; hence, the equation $H(xz^{\rho},y)=0$ is partition regular if and only if there is an $i\in[k]$ such that the equation $b_i y = a_ixz^{\rho}$ is partition regular over $\N$. Thus, it is enough to characterize the partition regularity of equations of the form 

\begin{equation}\label{thisone} 
a^{n}x^{n}z^{m}=b^{n}y^{n}
\end{equation}
for $m,n,a,b\in\Z$ satisfying $\gcd(m,n)=\gcd(a,b)=1$.

\begin{thm}\label{theorem:char_pr_H}
If the Equation \ref{thisone} is partition regular over $\N$, then  $\frac{a}{b}$ is an $\frac{m}{n}$-power in $\Q$. If $\frac{a}{b}$ is an $\frac{m}{n}$-power in $\N$, then the equation $\ref{thisone}$ is partition regular over $\N$. 
\end{thm}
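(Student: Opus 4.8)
The plan is to treat the two implications separately, handling necessity by a $p$-adic colouring and sufficiency by the complete-Rado-functional machinery of Theorem \ref{theorem:pr_polynomials_crf}.

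For necessity, suppose Equation \ref{thisone} is partition regular and fix a prime $p$. Taking $p$-adic valuations $v_p$ of both sides of $a^nx^nz^m=b^ny^n$ yields the identity
\[
 n\bigl(v_p(x)-v_p(y)\bigr)+m\,v_p(z)=n\bigl(v_p(b)-v_p(a)\bigr),
\]
valid for every solution. I would then apply the finite colouring $c_p(t)=v_p(t)\bmod m$ of $\N$: a $c_p$-monochromatic solution satisfies $v_p(x)\equiv v_p(y)\pmod m$, and reducing the displayed identity modulo $m$ annihilates the term $m\,v_p(z)$, leaving $n\bigl(v_p(b)-v_p(a)\bigr)\equiv 0\pmod m$. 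Since $\gcd(m,n)=1$, this forces $v_p(a)\equiv v_p(b)\pmod m$. Running this argument for each of the finitely many primes dividing $ab$ (all other primes being trivial) shows $m\mid v_p(a/b)$ for every $p$, i.e. $a/b$ is, up to sign, the $m$-th power of a rational, and therefore an $\frac{m}{n}$-power in $\Q$.

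For sufficiency, I would first record that the hypothesis that $\frac{a}{b}$ be an $\frac{m}{n}$-power in $\N$, combined with $\gcd(a,b)=1$, grants $a=\alpha^m$ and $b=\beta^m$ with $\alpha,\beta\in\N$. The support of $P=a^nx^nz^m-b^ny^n$ gives, exactly as in Corollary \ref{corollary:characterization_complete_rado_func_3_var}, the complete Rado functional $\mathcal{J}=(\{(n,0,m)\},\{(0,n,0)\},m)$: the associated matrix $(n,-n,m)$ satisfies the columns condition and admits the constant solution $s=1$, so $\mathcal{J}$ is genuine by Theorem \ref{theorem:PR_URF_system}. The attached monovariate polynomial is then $Q_{P,\mathcal{J}}(w)=a^nw^m-b^n$, which has the rational root $w=(b/a)^{n/m}=(\beta/\alpha)^n=\beta^n/\alpha^n$.

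It remains to convert this rational root into genuine partition regularity over $\N$, and this is the step I expect to be the crux, since Theorem \ref{theorem:pr_polynomials_crf} by itself only exploits roots lying in $S$. I would invoke Lemma \ref{corollary:rational-root-maximal-rado} with $S=\N$ and root $\beta^n/\alpha^n$: it produces a \emph{natural} root $\beta^n$ for $Q_{\tilde P,\mathcal{J}}$, where $\tilde P(x,y,z)=P(x/\alpha^n,y/\alpha^n,z/\alpha^n)$, whence $\tilde P$ is partition regular over $\N$. Finally Theorem \ref{theorem:rational-root} descends this to $P$, the required divisible witnessing ultrafilter being one of the form $\overline{\exp}_{\beta^n}(\U)$ supplied by the exponential construction underlying Theorem \ref{theorem:pr_polynomials_crf}. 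The only genuinely delicate point is the verification of this divisibility, namely that the exponential push-forward yields a multiplicatively divisible ultrafilter concentrating on solutions of $\tilde P$; everything else reduces to bookkeeping with $\gcd(m,n)=1$.
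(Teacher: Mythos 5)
Your necessity argument is correct and is essentially the paper's own proof: the paper colours $\N$ by the classes $n\nu_p(k)\bmod m$, which (since $\gcd(m,n)=1$) is the same partition as your $v_p(t)\bmod m$, and extracts $m\mid n\,\nu_p(a/b)$ for every prime $p$ exactly as you do. (Both you and the paper are equally cavalier about signs; no complaint there.) Your identification of the complete Rado functional $(\{(n,0,m)\},\{(0,n,0)\},m)$, with matrix $(n,-n,m)$ satisfying the columns condition and constant solution $s=1$, also matches the paper.

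The gap is in your sufficiency step, precisely at the point you flagged: the witnessing ultrafilter produced by the exponential construction is \emph{not} divisible, and cannot be made so. If $\V=\overline{\exp}_{\beta^n}(\U)$, then $t\N\in\V$ would require $\{x\in\N: t\mid \beta^{nx}\}\in\U$, and for any $t$ with a prime factor not dividing $\beta$ this set is empty; so $\V$ fails divisibility outright, and Theorem \ref{theorem:rational-root} cannot be applied with this witness. So the chain Lemma \ref{corollary:rational-root-maximal-rado} $\to$ Theorem \ref{theorem:rational-root} does not close as written. Moreover, the detour is unnecessary and stems from using only the weaker consequence of the hypothesis: you extracted $a=\alpha^m$, $b=\beta^m$, which is what the $\Q$-power condition gives, but the theorem's sufficiency hypothesis is that the ratio is an $\frac{m}{n}$-power \emph{in $\N$}. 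Together with $\gcd(a,b)=1$ this forces the relevant root $l=(b/a)^{n/m}$ of $Q_{P,\mathcal{J}}(w)=a^nw^m-b^n$ to be a natural number (one of $a,b$ must be $\pm 1$), so Theorem \ref{theorem:pr_polynomials_crf} applies directly with $S=\N$ and no rational-root transfer is needed. This is in effect what the paper does: it rewrites Equation \ref{thisone} equivalently as $x^nz^m=l^my^n$ (Equation \ref{thisother}), takes by Theorem \ref{theorem:PR_linear_inhomogeneous} an ultrafilter $\V$ witnessing the partition regularity of the inhomogeneous linear equation $nu+mv=nw+m$ (columns condition plus constant solution $s=1$), and checks by hand that $\overline{\exp}_l(\V)$ witnesses the partition regularity of \ref{thisother} --- i.e.\ it unfolds Theorem \ref{theorem:pr_polynomials_crf} for the natural root $l$. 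Replace your last paragraph with this direct application and your proof is complete; had the hypothesis only been ``$\frac{m}{n}$-power in $\Q$'', no such repair is available (and indeed the paper deliberately leaves that case open over $\N$, noting the two conditions coincide only over $\Q$).
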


Let us consider for each prime $p$ the $p$-adic evaluation $\nu_p:\Q^\times\to\Z$. Then, for each non-zero rationals $\rho,r$ we have that $\nu_p(r^\rho)=\rho\nu_p(r)$. Then, an irreducible fraction $\frac{a}{b}$ an $\frac{m}{n}$-power, where $\frac{m}{n}$ is also irreducible, if and only if for all prime $p$, $m$ divides $n\nu_{p}(r)$.

\begin{proof}[Proof of Theorem \ref{theorem:char_pr_H}]
We proceed analogously to the proof of \cite[Lemma 3.3]{FarhangiMagner2021}. Suppose that $r=\frac{a}{b}$ is not an $\frac{m}{n}$-power in $\Q$. Then, there is a prime $p$ such that $m$ does not divide $n\nu_p(r)$. For each $i\in[0,m-1]$, define 
\begin{equation*}
    C_i = \{k\in\N: n\nu_p(k) \equiv i \mod m\}.
\end{equation*}
If $x,y,z\in C_i$, then 
\begin{align*}
    n\nu_p(axz^{\frac{m}{n}})-n\nu_p(by) & \equiv n\nu_p\left(r\right) + n\nu_p(x)+m\nu_p(z)-n\nu_p(y) \equiv\\ & \equiv n\nu_p\left(r\right)\not\equiv 0\mod n
\end{align*}
which implies that the equation \ref{thisone} cannot be solved inside $C_i$. 

Conversely, suppose that $a^nl^m=b^n$ for some $l\in\N$; hence, the partition regularity of the equation \ref{thisone} is equivalent to the partition regularity of the equation
\begin{equation}\label{thisother}
    x^nz^m = l^m y^n.
\end{equation}
By Theorems \ref{theorem:PR_linear_inhomogeneous} and \ref{theorem:PR_iff_ultrafilters}, there is a free ultrafilter $\V\in\bN$ that witnesses the partition regularity of the equation 
\begin{equation*}
    nu+mv=nw+m.
\end{equation*}
We claim that $\U=\exp_l(\V)$ witnesses the partition regularity of the equation \ref{thisother}; indeed, given a $A\in\U$ we have that $B=\{c\in \N: c^l\in A\}\in\V$; thus, one can find $c_1,c_2,c_3\in B$ such that $nc_1+mc_2=nc_3+m$. Thus,  
\begin{equation*}
    (l^{c_1})^n (l^{c_2})^m = l^{m}(l^{c_3})^n,
\end{equation*}
which implies that $x=l^{c_1}$, $z=l^{c_2}$ and $y=l^{c_3}$ is a solution to the Equation \ref{thisother} contained in $A$. By Theorem \ref{theorem:PR_iff_ultrafilters}, the equation \ref{thisother} is infinitely partition regular over $\N$. 
\end{proof}

Note that, if the partition regularity is considered over $\Q$, then the partition regularity of \ref{thisone} occurs if and only if $\frac{a}{b}$ is an $\frac{m}{n}$-power in $\Q$.
\printbibliography 
\end{document}